
\documentclass[reqno,11pt,final]{amsart}


\usepackage[a4paper,left=25mm,right=25mm,top=30mm,bottom=30mm,marginpar=20mm]{geometry} 

\usepackage{amsmath}
\usepackage{amssymb}
\usepackage{amsthm}
\usepackage{amscd}
\usepackage[ansinew]{inputenc}
\usepackage[table]{xcolor}
\usepackage{color}
\usepackage[final]{graphicx}
\usepackage{tikz}
\usepackage{bbm}
\usepackage{appendix}

\usepackage{stmaryrd}
 \usetikzlibrary{fadings}

\usepackage{menukeys}

\numberwithin{figure}{section}

\usetikzlibrary{patterns} 
\usetikzlibrary{positioning}
\usetikzlibrary{calc}
\usetikzlibrary{arrows,shapes,backgrounds}
\usetikzlibrary{intersections} 


\renewcommand{\epsilon}{\varepsilon}

\numberwithin{equation}{section}

\newtheoremstyle{thmlemcorr}{10pt}{10pt}{\itshape}{}{\bfseries}{.}{10pt}{{\thmname{#1}\thmnumber{ #2}\thmnote{ (#3)}}}
\newtheoremstyle{thmlemcorr*}{10pt}{10pt}{\itshape}{}{\bfseries}{.}\newline{{\thmname{#1}\thmnumber{ #2}\thmnote{ (#3)}}}
\newtheoremstyle{defi}{10pt}{10pt}{\itshape}{}{\bfseries}{.}{10pt}{{\thmname{#1}\thmnumber{ #2}\thmnote{ (#3)}}}
\newtheoremstyle{remexample}{10pt}{10pt}{}{}{\bfseries}{.}{10pt}{{\thmname{#1}\thmnumber{ #2}\thmnote{ (#3)}}}
\newtheoremstyle{ass}{10pt}{10pt}{}{}{\bfseries}{.}{10pt}{{\thmname{#1}\thmnumber{ A#2}\thmnote{ (#3)}}}

\theoremstyle{thmlemcorr}
\newtheorem{theorem}{Theorem}
\numberwithin{theorem}{section}
\newtheorem{lemma}[theorem]{Lemma}
\newtheorem{corollary}[theorem]{Corollary}
\newtheorem{proposition}[theorem]{Proposition}

\theoremstyle{thmlemcorr*}
\newtheorem{theorem*}{Theorem}
\newtheorem{lemma*}[theorem]{Lemma}
\newtheorem{corollary*}[theorem]{Corollary}
\newtheorem{proposition*}[theorem]{Proposition}
\newtheorem{problem*}[theorem]{Problem}
\newtheorem{conjecture*}[theorem]{Conjecture}

\theoremstyle{defi}
\newtheorem{definition}[theorem]{Definition}

\theoremstyle{remexample}
\newtheorem{remark}[theorem]{Remark}
\newtheorem{example}[theorem]{Example}

\theoremstyle{ass}

\newcommand{\Acal}{\mathcal{A}}

\newcommand{\Mcal}{\mathcal{M}}

\newcommand{\Ibb}{\mathbb{I}}

\DeclareMathOperator{\diag}{diag}
\DeclareMathOperator{\charf}{\mathbbm{1}}

\DeclareMathOperator{\id}{id}

\DeclareMathOperator{\diam}{diam}

\DeclareMathOperator{\curl}{curl}
\DeclareMathOperator{\dist}{dist}

\newcommand{\ceilb}[1]{\big\lceil #1 \big\rceil}

\newcommand{\norm}[1]{\|#1\|}

\newcommand{\normb}[1]{\bigl\|#1\bigr\|}

\newcommand{\abs}[1]{|#1|}

 \newcommand{\double}[1]{{\llbracket{#1}\rrbracket}}

\newcommand{\dd}{\;\mathrm{d}}

\newcommand{\N}{\mathbb{N}}
\newcommand{\R}{\mathbb{R}}

\newcommand{\Z}{\mathbb{Z}}

\newcommand{\weakly}{\rightharpoonup}
\newcommand{\weaklystar}{\overset{*}\rightharpoonup}

\newcommand{\todown}{\downarrow}

\newcommand{\eps}{\epsilon}

\newcommand{\Ysoft}{Y_{\mathrm{soft}}}
\newcommand{\Ystiff}{Y_{\mathrm{stiff}}}

\newcommand{\Isoft}{I_{\mathrm{soft}}}
\newcommand{\Istiff}{I_{\mathrm{stiff}}}
\newcommand{\Wstiff}{W_{\mathrm{stiff}}}
\newcommand{\Wsoft}{W_{\mathrm{soft}}}

\DeclareMathOperator{\SO}{SO}

\DeclareMathOperator{\Whom}{W_\mathrm{hom}}

\def\Xint#1{\mathchoice 
{\XXint\displaystyle\textstyle{#1}}%
{\XXint\textstyle\scriptstyle{#1}}%
{\XXint\scriptstyle\scriptscriptstyle{#1}}%
{\XXint\scriptscriptstyle\scriptscriptstyle{#1}}%
\!\int} 
\def\XXint#1#2#3{{\setbox0=\hbox{$#1{#2#3}{\int}$} 
\vcenter{\hbox{$#2#3$}}\kern-.5\wd0}} 
\def\dashint{\,\Xint-}


\title[Asymptotic rigidity of layered structures and homogenization]{Asymptotic rigidity of layered structures and its\\ application in homogenization theory}

\author{Fabian Christowiak}
\address{Fakult\"at f\"ur Mathematik, Universit\"at Regensburg, 93040 Regensburg, Germany}
\email{Fabian.Christowiak@mathematik.uni-regensburg.de}

\author{Carolin Kreisbeck}
\address{Mathematisch Instituut, Universiteit Utrecht, Postbus 80010, 3508 TA Utrecht, The Netherlands}
\email{c.kreisbeck@uu.nl}

\begin{document}


\maketitle
\thispagestyle{empty}

 \begin{abstract} 
 
In the context of elasticity theory, rigidity theorems allow to derive global properties of a deformation from local ones. 
This paper presents a new asymptotic version of rigidity, applicable to elastic bodies with sufficiently stiff components arranged into fine parallel layers. We show that strict global constraints of anisotropic nature occur in the limit of vanishing layer thickness, and give a characterization of the class of effective deformations.  
The optimality of the scaling relation between layer thickness and stiffness is confirmed by suitable bending constructions.
Beyond its theoretical interest, this result constitutes a key ingredient for the homogenization of variational problems modeling high-contrast bilayered composite materials, where the common assumption of strict inclusion of one phase in the other is clearly not satisfied. We study a model inspired by hyperelasticity via $\Gamma$-convergence, for which we are able to give an explicit representation of the homogenized limit problem. It turns out to be of integral form with its density corresponding to a cell formula.

\vspace{8pt}                               
\vspace{8pt}

 \noindent\textsc{MSC (2010):}  49J45 (primary); 74Q05, 74B20 
 
 \noindent\textsc{Keywords:} geometric rigidity, $\Gamma$-convergence homogenization, layered composite materials, elasticity theory
 
 \vspace{8pt}
 
 \noindent\textsc{Date:} \today.
 \end{abstract}

\section{Introduction}

Rigidity is a prevalent concept in different areas of mathematics. Generally speaking, it refers to powerful statements that
allow to draw far-reaching conclusions from seemingly little information, such as deducing global properties of a function from local ones.
A classical result along these lines is often referred to as Liouville's theorem on geometric rigidity, see e.g.~\cite{IwM98}. It says that every smooth local isometry of a domain corresponds to a rigid body motion. A generalization to the Sobolev setting is due to Reshetnyak~\cite{Res67}, precisely, if $u\in W^{1,p}(\Omega;\R^n)$ with $\Omega\subset \R^n$ a bounded Lipschitz domain and $1<p<\infty$ satisfies  
\begin{align}\label{exact_diffinclusion}
\nabla u \in SO(n) 
\end{align}
pointwise almost everywhere in $\Omega$, then $u$ is harmonic and 
\begin{align}\label{rigid_body_motion}
u(x)=Rx+b \ \ \text{ for $x\in\Omega$ with $R\in SO(n)$ and $b\in \R^n$.}
\end{align}
It is not hard to see that if connectedness of the domain fails, then global rigidity is no longer true, as different connected components can then be rotated and translated individually. 

Yet, for a domain that has several rigid components arranged into very fine parallel layers (see Fig.~\ref{fig:intro}), global geometric constraints of anisotropic nature occur in the limit of vanishing layer thickness. Since these restrictions become prominent only after a limit passage, we speak of asymptotic rigidity of layered structures.
A first rigorous result in this direction can be found in~\cite{ChK17} for the special case $n=2$ and $p=2$. There, it was proven that, under the assumption of local volume preservation and up to global rotations, only shear deformations aligned with the orientation of the layers can occur as effective deformations.

In this paper, we extend the result of~\cite{ChK17} to arbitrary dimensions $n\geq 2$ and general $1<p<\infty$, and more significantly, relax the assumption of rigid layers by requiring only sufficient stiffness (see Theorem~\ref{prop:rigidity}). Formally, this corresponds to replacing the exact differential inclusion~\eqref{exact_diffinclusion} by an approximate one, very much like the quantitative rigidity estimate by  Friesecke, James \& M\"uller~\cite[Theorem~3.1]{FJM02} 
 generalizes Reshetnyak's theorem. 
The paper~\cite{FJM02} has initiated increased interest in rigidity and its quantification over the last years, especially among  
analysts working on variational methods with applications in materials science. 
For instance, a quantitative version of piecewise rigidity for $SBV$-functions \cite{CGP07} was established in~\cite{FrS15}, and there is recent work on rigidity of conformal maps~\cite{FaZ05}, of non-gradient fields~\cite{MSZ14} or in the non-Euclidean setting~\cite{LeP11}.

To be more precise about our results, some notation on the geometry of bi-layered structures is needed. 
Throughout the manuscript, let $\Omega\subset \R^n$ with $n\geq 2$ be a bounded Lipschitz domain, $\lambda\in (0,1)$, and $Y=(0,1]^n$ the periodicity cell. We set 
\begin{align*}
\Ysoft= (0,1]^{n-1}\times (0,\lambda) \qquad \text{and} \qquad \Ystiff =  Y\setminus \Ysoft,
\end{align*}
cf.~Fig.~\ref{fig:intro}. Without further mentioning, $\Ysoft$ and $\Ystiff$ are identified with their $Y$-periodic extensions. To describe the thickness of two neighboring layers, we introduce a parameter $\eps>0$, which is supposed to be small and captures the length scale of the heterogeneities. The disjoint sets $\eps \Ystiff\cap\Omega$ and $\eps \Ysoft\cap\Omega$ partition the domain $\Omega$ into two phases of alternating layers. 
Notice that the parameter $\lambda$ stands for the relative thickness of the softer components.   

\begin{figure}[ht]
\begin{tikzpicture}
\begin{scope}[scale = 1.4]
\def\material{(0.5,0.3) to[out=320, in=180] (2,0) to (4,0) to[out=360,in=220] (5,0.3) to[out=40,in=320]  (5,1.2) to[out=140, in=310](3,1.8) to[out=130,in=220] (3,2.3) to[out=40,in=180] (3.6,2.5) to[out=0, in= 350] (3.6,3) to[out=170, in= 0] (2,3.1) to (1.5,3.1) to[out=180,in=70] (0.5,2.4) to[out=250, in=130]  cycle}

 	\foreach \z in {0,0.5,...,3}
        \draw[fill=gray!30] ($(-0.5,\z) + (0,0.25) - (0,0.25)$) rectangle ($(5.4,\z)+(0,0.5)- (0,0.25)$);
        \fill[fill = white,thick, scale = 1] \material (-0.6,-0.5) rectangle (5.6,4.6);
        \draw[ thick, black, fill =white, fill opacity=0, scale = 1] \material;

	\def \xs{1.2} 
	\def \ys{0.75}

        \draw[thick] ($(\xs,\ys) + (0,0)$) rectangle ($(\xs,\ys)+(0.5,0.5)$);

	\def \xl{-3}
	\def \yl {1.6}

        \draw[dashed,black]  ($(\xs,\ys) + (0,0)$) -- ($(\xl,\yl)+(0,0)$);
        \draw[dashed, black] ($(\xs,\ys) + (0.5,0)$) -- ($(\xl,\yl)+(2,0)$);
        \draw[dashed, black] ($(\xs,\ys) + (0.5,0.5)$) -- ($(\xl,\yl)+(2,2)$);
        \draw[dashed, black] ($(\xs,\ys) + (0,0.5)$) -- ($(\xl,\yl)+(0,2)$);

        \draw[white, fill = white]  ($(\xl,\yl)+(0,0)$) rectangle ($(\xl,\yl)+(2,2)$);
        \draw[rectangle, black, fill= gray!30] ($(\xl,\yl)+(0,1)$) rectangle ($(\xl,\yl)+(2,2)$);
        \draw[black] (\xl,\yl) rectangle ($(\xl,\yl)+(2,2)$);
        
        \draw ($(\xl,\yl)+(-0.1,2.1)+(0.45,-0.35)$) node {$\Ystiff$};
	\draw ($(\xl,\yl)+(-0.1,1.1)+(0.45,-0.35)$) node {$\Ysoft$};
	\draw[<->] ($(\xl,\yl)+(0,0)+(-0.15,0)$) -- ($(\xl,\yl)+(0,1)+(-0.15,0)$) ;
	\draw ($(\xl,\yl)+(0,0.5)+(-0.35,0)$) node {$\lambda$};
	\draw ($(\xl, \yl)+(6.4,1.8)$) node {$\Omega\subset \R^n$};
		\draw[<->] ($(\xl,\yl)+(0,0)+(-0.15,0)$) -- ($(\xl,\yl)+(0,1)+(-0.15,0)$) ;	
	\draw ($(\xl, \yl) +(0.2,2.35)$) node {$Y=(0,1]^n$};
	\draw[->, very thick] ($(\xl,\yl)+(8.0,0.5)+(-0.15,0)$) -- ($(\xl,\yl)+(8.0,1.5)+(-0.15,0)$) ;	
	\draw ($(\xl, \yl) + (8.1,1)$) node {$e_n$};

	\def \xe{2.8}
	\def \ye{2.5}
	\draw[<->] ($(\xe,\ye) + (2.5,-2.0) + (0.1,0)$) -- ($(\xe,\ye)+(2.5,-1.5)+(0.1,0)$);
	\draw ($(\xe,\ye) + (2.5,-2.0) + (0.25,0.25)$) node {$\epsilon$};

\draw \material;
\end{scope}
\end{tikzpicture}
\caption{Illustration of bi-layered structure with stiff (gray) and softer (white) components and periodicity cell $Y$, subdivided into $\Ysoft$ and $\Ystiff$.}\label{fig:intro}
\end{figure}
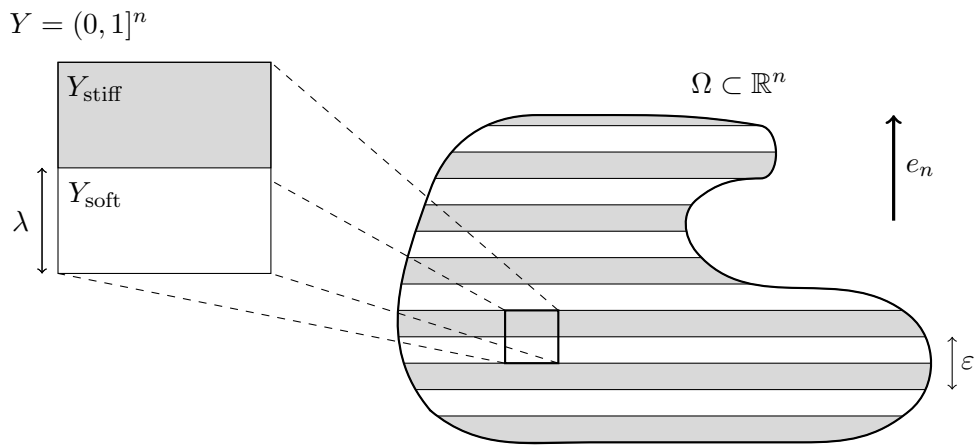

Under certain technical assumptions on the domain, which are specified in Definitions~\ref{def:connected} and~\ref{def:flat}, we obtain as our first main result a characterization for the asymptotic behavior of sequences of functions on $\Omega$ whose gradients are increasingly close to $SO(n)$ in $\eps \Ystiff\cap \Omega$ as $\eps\to 0$. 

\begin{theorem}\label{prop:rigidity} 
Let $\Omega\subset \R^n$ be a bounded, flat and cross-section connected Lipschitz domain and $1<p<\infty$.

i) Suppose that $(u_\eps)_\eps \subset W^{1,p}(\Omega;\R^n)$ is such that 
\begin{align}\label{approx_diffinclusion_intro}
 \int_{\eps\Ystiff\cap \Omega} \dist^p\big(\nabla u_\eps, SO(n)\big) \dd x \leq C\eps^\alpha 
 \end{align} for all $\eps >0$ with $\alpha\geq 0$ and a constant $C>0$. If $\alpha>p$ and $u_\eps\weakly u$ in $W^{1,p}(\Omega;\R^n)$ for some $u\in W^{1,p}(\Omega;\R^n)$, then 
\begin{align}\label{representation_nablau_intro}
u(x)= R(x)x + b(x), \quad x \in \Omega,
\end{align}
with $R\in W^{1,p}(\Omega;SO(n))$ and $b\in W^{1,p}(\Omega;\R^n)$ such that $\partial_i R =0$ and $\partial_i b =0$ for $i=1, \ldots, (n-1)$.

ii) If $u\in W^{1,p}(\Omega;\R^n)$ is of the form~\eqref{representation_nablau_intro}, then there exists a sequence $(u_\eps)_\eps \subset W^{1,p}(\Omega;\R^n)$ such that $u_\eps\weakly u$ in $W^{1,p}(\Omega;\R^n)$ and~$\nabla u_\eps\in SO(n)$ a.e.~in $\eps\Ystiff\cap \Omega$ for every $\eps>0$. 
\end{theorem}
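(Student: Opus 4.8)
The plan is to treat the two parts separately, with part~ii) being the straightforward recovery-sequence construction and part~i) the substantive statement.

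For part~i), the plan is to apply the Friesecke--James--M\"uller quantitative rigidity estimate layer by layer. Fix $\eps>0$. Each connected stiff slab $\eps\Ystiff\cap\Omega$ (for the relevant range of the $e_n$-coordinate) is, thanks to the flatness and cross-section connectedness hypotheses, a Lipschitz domain of diameter comparable to $1$ in the tangential directions and thickness $\eps(1-\lambda)$ in the $e_n$-direction; by a scaling argument one sees that \cite[Theorem~3.1]{FJM02} applies to each such slab with a constant that is uniform in $\eps$ (after rescaling the thin direction, or more carefully by covering each slab with order $\eps^{-(n-1)}$ cubes of side $\eps$ and patching). This produces, for each slab indexed by $j$ in the $O(\eps^{-1})$ family of stiff layers, a rotation $R_{\eps,j}\in SO(n)$ with
\begin{align*}
\int_{\text{slab }j}\abs{\nabla u_\eps - R_{\eps,j}}^p\dd x \leq C\int_{\text{slab }j}\dist^p(\nabla u_\eps,SO(n))\dd x.
\end{align*}
Summing over $j$ and using~\eqref{approx_diffinclusion_intro} gives $\int_{\eps\Ystiff\cap\Omega}\abs{\nabla u_\eps - R_\eps}^p\dd x\leq C\eps^\alpha$, where $R_\eps$ is the piecewise-constant (in the $e_n$-variable, constant on each slab) rotation field. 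The key quantitative point is then to control how much $R_{\eps,j}$ can change from one slab to the next: the soft layer between slabs $j$ and $j+1$ has thickness $\eps\lambda$, and on a tangential line crossing it the function $u_\eps$ is controlled only in $W^{1,p}$, but the traces of $u_\eps$ on the two bounding hyperplanes of a soft layer agree in $W^{1-1/p,p}$; combining this with the rigidity estimates on the two adjacent slabs and a Poincar\'e--trace inequality on a slab of thickness $\eps$ yields $\abs{R_{\eps,j+1}-R_{\eps,j}}^p \lesssim \eps^{-1}\int_{\text{slabs }j,j+1}\dist^p(\nabla u_\eps,SO(n))\dd x + (\text{tangential-derivative terms})$. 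I expect the tangential derivatives to be the genuinely delicate ingredient: one must show that $\partial_i u_\eps$ for $i<n$ is, within each slab, close to the corresponding column of $R_{\eps,j}$ and hence nearly independent of which slab one is in, so that summing the slab-to-slab increments in $\ell^p$ and using $\alpha>p$ (which beats the $O(\eps^{-1})$ loss from the $O(\eps^{-1})$ many layers, since $\eps^{-1}\cdot\eps^{\alpha}=\eps^{\alpha-1}\to 0$, and in fact one needs the stronger $\alpha>p$ to control an $L^p$-in-the-transverse-variable bound on $\nabla R_\eps$) shows that the piecewise-constant fields $R_\eps$, viewed as functions on $\Omega$ depending only on $x_n$, are bounded in $W^{1,p}$ in the $x_n$-direction and have vanishing tangential derivatives. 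After extracting a subsequence, $R_\eps\to R$ in $L^p$ with $R\in W^{1,p}(\Omega;SO(n))$ and $\partial_i R=0$ for $i<n$; passing to the limit in $\nabla u_\eps - R_\eps \to 0$ in $L^p$ (which forces $\nabla u_\eps\weakly \nabla u$ to equal the weak limit, hence $\nabla u = R$ a.e.) gives $\nabla u = R$. Finally, from $\nabla u(x)=R(x_n)$ one integrates: writing $x=(x',x_n)$, the tangential part gives $u(x) = R(x_n)x' + g(x_n)$ for some $g\in W^{1,p}$, and matching the $e_n$-component forces the affine form $u(x)=R(x_n)x + b(x_n)$ with $b\in W^{1,p}$, $\partial_i b=0$ for $i<n$; this is exactly~\eqref{representation_nablau_intro}. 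The main obstacle, as indicated, is making the transverse-increment estimate for $R_{\eps,j}$ rigorous across the soft layers and checking that the exponent condition $\alpha>p$ is precisely what is needed to close it.

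For part~ii), the plan is the reverse construction. Given $u(x)=R(x_n)x+b(x_n)$ with $R\in W^{1,p}(\Omega;SO(n))$, $b\in W^{1,p}$, first approximate $R$ and $b$ by piecewise-constant-in-$x_n$ maps $R_\eps$, $b_\eps$ that are constant on each period $\eps$-cell in the $x_n$-direction, taking values in $SO(n)$ and $\R^n$ respectively, and chosen so that $R_\eps\to R$, $b_\eps\to b$ in $L^p$ and the jumps between consecutive cells are $O(\eps)$-small in the appropriate sense (this is possible because $R$, $b$ depend only on $x_n$ and are in $W^{1,p}$ in that one variable, so a standard mollify-then-discretize argument works). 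On each stiff slab, set $u_\eps(x) = R_{\eps}x + c_{\eps}$ for the locally constant rotation $R_\eps$ there, so that $\nabla u_\eps = R_\eps\in SO(n)$ exactly on $\eps\Ystiff\cap\Omega$; then interpolate across each soft layer of thickness $\eps\lambda$ by an affine-in-$x_n$ interpolation between the values $R_{\eps,j}x+c_{\eps,j}$ and $R_{\eps,j+1}x+c_{\eps,j+1}$ on the two bounding hyperplanes, which is a well-defined $W^{1,p}$ (indeed Lipschitz-on-$\Omega$-in-$x'$) function whose gradient on the soft layer is bounded (the transverse derivative is $O(1)$ since it equals $\frac{1}{\eps\lambda}$ times a difference of two $O(\eps)$-close affine maps evaluated on the bounded set $\Omega$). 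One checks that $u_\eps$ is bounded in $W^{1,p}(\Omega;\R^n)$ and that $u_\eps\to u$ in $L^p$; boundedness in $W^{1,p}$ then upgrades $L^p$-convergence to weak $W^{1,p}$-convergence along a subsequence, and since the limit is determined it is the full sequence. This completes part~ii), and no serious obstacle arises here beyond bookkeeping the interpolation estimates.
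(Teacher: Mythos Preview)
Your proposal for part~i) contains two genuine errors that make the argument fail.

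First, the Friesecke--James--M\"uller constant on a thin slab of thickness $\eps(1-\lambda)$ is \emph{not} uniform in $\eps$: it degenerates like $\eps^{-1}$ (see~\eqref{scalingPeps} in the paper). Your covering-by-cubes idea does not help, since patching rotations from $O(\eps^{-(n-1)})$ cubes into a single rotation for the whole slab itself costs a factor of the aspect ratio. So after summing, the correct bound is $\int_{\eps\Ystiff\cap\Omega}|\nabla u_\eps-R_\eps|^p\dd x\leq C\eps^{\alpha-p}$, not $C\eps^\alpha$; this still tends to zero when $\alpha>p$, and in fact this is exactly where the threshold $\alpha>p$ enters.

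Second, and more seriously, the conclusion $\nabla u=R$ is simply false. The convergence $\nabla u_\eps-R_\eps\to 0$ holds only on $\eps\Ystiff\cap\Omega$; on the soft layers $\nabla u_\eps$ is controlled merely by the global $W^{1,p}$ bound. Hence the weak limit $\nabla u$ need not equal $R$. Indeed, if $\nabla u(x)=R(x_n)$ were a gradient, then $\partial_n(Re_i)=\partial_i(Re_n)$ would force $R'(x_n)e_i=0$ for all $i<n$, i.e.\ $R$ constant---so your ``matching the $e_n$-component'' step is inconsistent with a nonconstant $R$. The correct form is $\nabla u=R+(\partial_n R)x\otimes e_n+\partial_n b\otimes e_n$, which agrees with $R$ only in the first $n-1$ columns.

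The paper avoids both issues by working at the level of the \emph{functions}, not the gradients: it shows $\|u_\eps-w_\eps\|_{L^p(\Omega)}\to 0$ for piecewise rigid body motions $w_\eps(x)=R_\eps^i x+b_\eps^i$ (Poincar\'e on stiff layers, a shifting argument on soft layers), then obtains compactness of the one-dimensional rotation field $\Sigma_\eps(t)=\sum_i R_\eps^i\mathbbm{1}_{[\eps i,\eps(i+1))}(t)$ via the translation estimate $\|\Sigma_\eps(\cdot+\xi)-\Sigma_\eps\|_{L^p}\lesssim\xi+\eps^{\alpha/p-1}$ and Fr\'echet--Kolmogorov. The translation estimate itself comes from a tailored reverse-Poincar\'e inequality (Lemma~\ref{lem:estimateR1R2}), which bounds $|R_2-R_1|$ from below by $l^{-1}$ times the $L^p$ norm of $(R_2-R_1)x+d$ on a thin strip, \emph{uniformly in the strip thickness}---this replaces your vague trace/Poincar\'e step across soft layers. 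Finally, since $w_\eps\to u$ and $\sigma_\eps(x)=\Sigma_\eps(x_n)x\to R(x_n)x$ in $L^p$, one reads off $b=u-Rx$ directly, without ever claiming $\nabla u=R$.

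Your part~ii) is in the right spirit and would probably work, though the claim that consecutive values of $R_\eps$ are ``$O(\eps)$-close'' is imprecise for $R\in W^{1,p}$ (only $O(\eps^{1-1/p})$ pointwise, which nevertheless suffices after an $L^p$ summation). The paper's construction is cleaner: it reparametrizes the one-dimensional curve $\Sigma(x_n)$ by a piecewise-affine map $\varphi_\eps$ that freezes on $\eps\Istiff$ and runs at speed $1/\lambda$ on $\eps\Isoft$, so $\Sigma_\eps=\Sigma\circ\varphi_\eps$ is automatically $SO(n)$-valued and constant on each stiff interval (Lemma~\ref{lem:approximation_R}); a direct curl-free check then produces the potential $u_\eps$.
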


One observes that~\eqref{representation_nablau_intro} resembles~\eqref{rigid_body_motion}, just that now $R$ will in general not be constant, but depends on the $x_n$-variable, and hence, varies in the direction orthogonal to the layers. This condition can be considered the result of a non-trivial interplay between the effects of rigidity and anisotropy. 

The proof of Theorem~\ref{prop:rigidity}\,$i)$ consists of three main steps, the layerwise approximation of each $u_\eps$ by rigid body motions, a compactness argument for the resulting one-dimensional auxiliary functions of piecewise constant rotations, and a limit representation argument. Regarding its overall structure, the reasoning organized similarly to~\cite[Proposition~2.1]{ChK17}. 
Technically, however, the transition from exact to the approximate differential inclusions requires two substantial changes, which make the arguments more involved than in~\cite{ChK17}.
Instead of Reshetnyak's theorem, we apply the quantitative rigidity estimate on each layer, and the Fr\'echet-Kolmogorov compactness result (see~Lemma~\ref{theo:FrechetKolmogorov}) is used as a refinement of Helly's selection principle. 

Proving the second part of Theorem~\ref{prop:rigidity} involves the explicit construction of an approximating sequence $(u_\eps)_\eps$ with the desired properties. To this end, we critically exploit the special structure of $u$ as in~\eqref{representation_nablau_intro}, which features a splitting of the $x_n$-variable from the remaining ones, so that $u$ has essentially the character of a one-dimensional function. 

\begin{remark} 
 a) The gradient of $u$ as in~\eqref{representation_nablau_intro} takes the form
\begin{align}\label{form_nablau}
\nabla u = R + (\partial_n R) x \otimes e_n + \partial_n b\otimes e_n,
\end{align}
which necessarily requires that $(\nabla u)e_i = Re_i$ for all $i=1, \ldots, n-1$. \\[-0.2cm]

b) We point out that the scaling regime $\alpha>p$, which quantifies the relation between thickness and stiffness of the layers, is optimal for Theorem~\ref{prop:rigidity}\,$i)$. 
As shown in Section~\ref{sec:overview&examples}, asymptotic rigidity of layered structures fails for $\alpha\leq p$.  
We provide explicit examples inspired by bending deformations, for which the limit maps $u$ are such that $\partial_1 u$ depends non-trivially on $x_1$ or $\partial_1 u$ is not normed to one. 

Note that the two extreme cases $\alpha=0$ and ``$\alpha=\infty$'' (formal for $\eps^\alpha=0$) in~\eqref{approx_diffinclusion} correspond the situations of the stiff layers being actually soft or fully rigid, respectively. \\[-0.2cm]

c) Theorem~\ref{prop:rigidity} can be extended in different directions. One generalization concerns a $(p,q)$-version Theorem~\ref{prop:rigidity}\,$i)$. Indeed, if the exponent $p$ in~\eqref{approx_diffinclusion} is replaced by $q\in (1, \infty)$ the statement remains valid provided that $\alpha>q$. In this more general setting, we can let $1\leq p<\infty$.  The only modification in the case $p=1$ is that $R$ and $b$ will be $BV$-functions. We refer to Remark~\ref{rem:rigidity}\,a) and Remark~\ref{rem:necessity}\,b) for more details. 
Moreover, as mentioned in Remark~\ref{rem:necessity}\,c), asymptotic rigidity in the sense of Theorem~\ref{prop:rigidity}\,$i)$ still holds if the relative thickness of the stiff layers depend on $\eps$, being much larger than $\eps^{\frac{\alpha}{p}-1}$.  
For a comment on reduced assumptions for the domain $\Omega$, see Remark~\ref{rem:domain} as well as~Theorem~\ref{theo:rigidity_general}. \\[-0.2cm]

d) If one requires additionally in Theorem~\ref{prop:rigidity} that the limit function $u$ is locally volume preserving, that is $u \in W^{1,r}(\Omega;\R^n)$ for $r \geq n$ with $\det \nabla u = 1$ a.e.~in $\Omega$, 
then $Re_n$ is constant, see Corollary~\ref{cor:asymptotic_rigidity}. 
In the two-dimensional setting with $n=2$, this implies that $R$ is constant, and one can think of $u$ as horizontal shear deformation up to global rotations, cf.~also~\cite[Proposition~2.1]{ChK17}.
\end{remark}

From the viewpoint of applications in materials science, Theorem~\ref{prop:rigidity} identifies characteristics of macroscopically attainable deformations of bi-layered high-contrast composite materials. This observation constitutes an important step towards a rigorous characterization of their effective behavior via homogenization. 
Indeed, we will discuss in the following how asymptotic rigidity of layered structures serves as the basis for solving a relevant class of homogenization problems in the context of hyperelasticity. 

In the 1970s, the Italian school around De Giorgi established the concept of $\Gamma$-convergence~\cite{Gio75, DeF75} (see also see~\cite{Bra05, Dal93} for a comprehensive introduction), bwhich has been used successfully among others in homogenization theory to bridge between microscopic and macroscopic scales.  
It is a natural notion for variational convergence, i.e.~ limit passages in parameter-dependent minimization problems.
The key property is that if a sequence of energy functionals $\Gamma$-converges to a limit functional, this automatically implies convergence of the corresponding (almost) minimizers. 

By now classcial homogenization results via $\Gamma$-convergence include the papers by Marcellini~\cite{Mar78} in the convex setting, as well as the first work in the non-convex case with standard $p$-growth by  M\"uller~\cite{Mul87} and Braides~\cite{Bra85}.
Within multiscale analysis, which comprises homogenization and relaxation theory, variational problems with non-convex pointwise or differential constraints are known to be technically challenging, cf.~\cite{BFL00, CDD06, CoD15, DuG16, KrK16}. 
Despite recent partial progress towards attacking the issue of localization, i.e.~proving that limit functionals preserve integral form, with different methods, e.g.~\cite{CoD15, DuG16, KRW15, Pra17},
there are still general open questions that cannot be worked out with existing tools.
In this article, we investigate homogenization problems subject to a special type of approximate differential inclusion constraint, which do not satisfy standard assumptions and therefore require a tailored approach.

Let $\alpha>0$ and $p\in (1, \infty)$. Consider for each $\eps>0$ the integral functional $E_\eps$ defined for $u\in W^{1,p}(\Omega;\R^n)$ by
\begin{align*}
E_\eps(u) =  \int_{\eps\Ystiff\cap\Omega} \frac{1}{\eps^\alpha} \dist^p(\nabla u, SO(n)) \dd{x} + \int_{\eps\Ysoft\cap \Omega} \Wsoft(\nabla u)\dd{x}
\end{align*}
with an integrand $\Wsoft:\R^{n\times n}\to \R$, which is in general not convex or quasiconvex.
These functionals model the elastic energy of a layered composite. The first term with diverging elastic constants, scaling like $\eps^{-\alpha}$, is the contribution of the stiff components and the second term is associated with the softer components.  

In the regime $\alpha>p$, we show that the $\Gamma$-limit of $(E_\eps)_\eps$ as $\eps\to 0$ with respect to strong convergence in $L^p(\Omega;\R^n)$, or equivalently weak convergence in $W^{1,p}(\Omega;\R^n)$, exists and determine a characterizing formula. The required technical assumptions on the geometry of $\Omega$ are those of Definitions~\ref{def:connected} and~\ref{def:flat} and the density $\Wsoft$ is supposed to satisfy $(H1)$-$(H3)$ below.
In fact, the $\Gamma$-limit has integral form, is subject to the constraints on the admissible macroscopic deformations induced by asymptotic rigidity (cf.~Theorem~\ref{prop:rigidity}), and can be expressed purely in terms of the energy density $\Wsoft$ and the relative thickness $\lambda$ of the softer layers.  Precisely,
\begin{align}\label{hom_formula}
E_{\rm hom}(u):= \Gamma\text{-}\lim_{\eps\to 0} E_\eps(u) = \int_\Omega \lambda\Wsoft^{\rm qc}\big(\tfrac{1}{\lambda} (\nabla u-(1-\lambda)R)\big)\dd{x}
\end{align}
for all $u$ of the form~\eqref{representation_nablau_intro}, and $E_{\rm hom}(u)=\infty$ otherwise. Here, $\Wsoft^{\rm qc}$ stands for the quasiconvex envelope of $\Wsoft$, for background information on generalized notions of convexity and relaxations, see e.g.~\cite{Dac08}.

Next, we collect a few remarks to put the above mentioned homogenization result - a detailed formulation of the full version is  given in Theorem~\ref{theo:hom} - in context with related work in the literature. 

\begin{remark}
a) General theorems on homogenization tend to be rather implicit in the sense that they involve (multi)cell formulas (e.g.~\cite{Bra85, Mul87}), which again require to solve infinite dimensional minimization problems. In contrast, the $\Gamma$-limit in ~\eqref{hom_formula} is clearly explicit with regards to the macroscopic effect of the heterogeneities. If the relaxation of the softer components, or in other words, the quasiconvexification of $\Wsoft$, is known, the representation of the homogenized energy density becomes even fully explicit. To illustrate the latter, we discuss the prototypical example of the Saint-Venant Kirchhoff stored energy function in Example~\ref{ex:SaintVenantKirchhoff}. \\[-0.2cm]

b) As we demonstrate in~Remark~\ref{rem:cell}, the density in~\eqref{hom_formula} coincides with a single-cell formula. This indicates that microstructures ranging over multiple cells (or layers) are not energetically favorable, in contrast with the general theory.
Indeed, by M\"uller's well-known counterexample~\cite{Mul87} (see also~\cite{BaG10}  for another example), multi-cell formulas are necessary to describe homogenized limits of general non-convex problems. The recent paper~\cite{NES18} refines this observation by showing that a single-cell formula is sufficient in a neighborhood of rotations, though.\\[-0.2cm]

c) Rigorous statements about variational homogenization of elastic high-contrast materials seem to be restricted
to the geometric assumption of strict inclusions, with either the stiff phases embedded in the softer \cite{BrG95, ChC12, DuG16}, or the other way around~\cite{CCN17}. To our knowledge, Theorem~\ref{theo:hom} provides the first characterization 
 in the setting of non-inclusion type heterogeneities. Their layered geometry is reflected macroscopically in the anisotropic structure of $E_{\rm hom}$. \\[-0.2cm]

d) Asymptotic rigidity of layered structures as a technical tool is not limited to the homogenization problem in Theorem~\ref{theo:hom}, but can be used in different contexts. It is for instance an important ingredient in the asymptotic analysis of models for layered materials in finite crystal plasticity, see~\cite{ChK17} and~\cite[Chapter~5, 6]{Chr18} for more details. Let us mention that the second reference makes a first step towards extending the results to the context of  stochastic homogenization, by assuming a random distribution of the layer thickness.  
\end{remark}

We conclude the introduction with a few words about the proof of Theorem~\ref{theo:hom}, focussing on the main ideas and technical challenges.
The construction of a recovery sequence for affine limit maps (Step~1) is based on laminates made of rotations and shear components (cf.~\cite[Section~4]{ChK17}), which we augment with suitable perturbations on the softer layers. 
The harder part is the case of general limits (Step~3). Recall that Theorem~\ref{prop:rigidity}\,$ii)$ provides an admissible approximating sequence for any possible limit map as in~\eqref{representation_nablau_intro}. However, these sequences fail to be energetically optimal in general. To remedy this problem, we localize by piecewise constant approximation of the limit functions, which can be done in a constraint preserving way due to the essentially one-dimensional character of the representation in~\eqref{representation_nablau_intro} (see also~\eqref{form_nablau}). Finally, we determine locally optimal microstructures as in the affine case and glue them onto the sequence from Theorem~\ref{prop:rigidity}\,$ii)$ in the softer parts. This construction is sufficient to recover the energy.

In essence, our reasoning for the liminf-inequality (Steps~2 and 4) comes down to using Theorem~\ref{prop:rigidity}\,$i)$ and to applying Jensen's inequality twice, first to obtain a lower bound energy estimate on each softer layer and then, in the optimization process over the entirety of layers. Besides, we employ the properties of Null-Lagrangians. 
The presented arguments rely strongly on the hypothesis that~$\Wsoft^{\rm qc}$ is polyconvex (referred to as $(H1)$), meaning that the quasiconvex envelope can be written as a convex function of the vector of minors, or in other words, that the quasiconvex envelope coincides with the polyconvex one. 
Notice that the same assumption can be found e.g.~in \cite{CoD15} in the context of relaxation problems with constraints on the determinant.  

Dropping $(H1)$ appears to be a non-trivial task. On a technical level, if the Jensen's inequalities mentioned above were to be replaced straight away by the related formulas defining quasiconvexity (see~\eqref{qc}), this would require careful cut-off arguments at the boundaries. In the stiff layers, though, cut-off conflicts with the rigidity constraints and difficulties may arise from non-local effects due to interaction between different layers. 
Hence, it remains an open question to understand whether removing $(H1)$ from the list of assumptions makes the $\Gamma$-limit $E_{\rm hom}$ in~\eqref{hom_formula} (if existent) smaller. Or in more intuitive terms, can the energy be further reduced by oscillations of the rotation matrices and long range effects over multiple layers? \\

\textbf{Structure of the article.} 
This paper is organized in five sections. In the subsequent Section~2, we discuss a range of explicit bending examples, which illustrate softer macroscopic behavior in the regimes $0 < \alpha \leq p$ and establish in particular the optimality of the condition $\alpha > p$ in Theorem~\ref{prop:rigidity}\,$i)$.  
Sections~3 and~4 contain the proofs of the asymptotic rigidity result formulated in Theorem~\ref{prop:rigidity}. 
In Section~3, we prove a generalization of the necessity part $i)$ 
as well as Corollary~\ref{cor:asymptotic_rigidity}, followed by a more detailed discussion on the geometric assumptions on the domain $\Omega$. Section~4 proceeds with the proof of the sufficiency statement $ii)$ of Theorem~\ref{prop:rigidity}.
In the final Section~5, we state our second main result on homogenization via $\Gamma$-convergence, that is Theorem~\ref{theo:hom}. For its proof, both parts of Theorem~\ref{prop:rigidity} are key. We conclude by relating the homogenization formula of~\eqref{hom_formula} to the cell formula as it occurs in models of composites with rigid layers. 
The appendix provides two technical auxiliary results in form of a specialized reverse Poincar\'e type inequality and a lemma on locally one-dimensional functions.\\

\textbf{Notation.}  
The standard unit vectors in $\R^n$ are denoted by $e_1, \dots, e_n$. For the Euclidean inner product between two vectors $a, b\in \R^n$ we write $a\cdot b$. Moreover,  let $a\otimes b=a b^T\in \R^{n\times n}$ for $a, b\in \R^n$, and set $a^\perp = (-a_n, a_2, \ldots, a_{n-1}, a_1)^T\in \R^n$ for $a\in \R^n$, which generalizes the usual notation for perpendicular vectors in two dimensions. The Frobenius norm of $A \in \R^{n\times n}$ is given by $|A|=\sqrt{AA^T}$. Our notation for block diagonal matrices is $A=\diag(A_1, A_2, \ldots, A_m)\in \R^{n\times n}$ with $A_i\in\R^{n_i\times n_i}$ and $\sum_{i=1}^m n_i=n$. 
In the following, we will often split up $a\in \R^n$ as $a=(a', a_n)$, where $a'=(a_1, \ldots, a_{n-1})$. For a matrix $A\in\R^{n\times n}$ a similar splitting into its columns is used, that is $A=(A'|Ae_n)$ with $A'\in \R^{n\times (n-1)}$.
For $t\in \R$, the expressions $\lfloor t \rfloor$ and $\lceil t\rceil$ stand for the largest integer smaller and smallest integer larger than $t$, respectively. 

By a domain $\Omega\subset \R^n$ we mean an open, connected subset of $\R^n$. An open cuboid is the Cartesian product $Q=(a_1, b_1)\times \ldots\times (a_n, b_n) =: \times_i (a_i, b_i)\subset \R^n$ with $a_i, b_i\in \R$ and $a_i<b_i$ for $i=1, \ldots, n$. Hence for us, cuboids will always be oriented along the coordinate axes. 
Furthermore, $\mathbbm{1}_E$ and $\chi_E$ are the indicator and characteristic function corresponding to a subset $E\subset \R^n$,
i.e., $\mathbbm{1}_E(x) =  1$ and $\chi_E(x)=0$ if $x\in E$, and $\mathbbm{1}_E(x)=0$ and $\chi_E(x)=\infty$ if $x\notin E$. 
For a measurable set $U$ and an integrable function $f:U\to\R^m$, let $\dashint_{U} f \dd{x} := \frac{1}{|U|} \int_U f\dd x$. 

We use the common notation for Lebesgue and Sobolev spaces, as well as for function spaces of continuously differentiable functions. By $L^p_0(\Omega;\R^m)$, we denote the space of functions in $L^p(\Omega;\R^m)$ with the property that their mean value vanishes. Periodic boundary condition are indicated by a lower case $\#$, for example in $W^{1,p}_{\#}(Y;\R^m)$. 

The distributional derivative of a function $f\in L^1_{\rm loc}(\Omega;\R^m)$ is denoted by $Df$, for partial derivatives in $e_i$-direction we write $\partial_i u$. Moreover, $Df=(D'f|\partial_n f)$ with $D'f=(\partial_1 f| \ldots|\partial_{n-1} f)$. If $f:\Omega\to \R^m$ is classically or weakly differentiable, we denote the (weak) gradient of $f$ by $\nabla f$. Here again, one has the splitting $\nabla f=(\nabla'f|\partial_n f)$ with $\nabla' f=(\partial_1 f|\ldots|\partial_{n-1}f)$. 
In case $f:J\to \R^m$ is a one-dimensional function with $J\subset \R$ an open interval, we simply write $f'$ for the derivative of $f$. 

Convergence of a sequence $(u_\eps)_\eps$ as $\eps\to 0$ means that $(u_{\eps_j})_j$ converges as $j\to \infty$ for any subsequence $\eps_j\todown 0$. 
Note finally the use of generic constants, mostly denoted by $c$ or $C$, which may vary from line to line without change in notation. 


\section{Optimality of the scaling regimes}\label{sec:overview&examples}

While for $\alpha =0$ in~\eqref{approx_diffinclusion} the class of effective deformations with finite energy comprises arbitrary Sobolev maps with vanishing mean value, the material response in the case ``$\alpha=\infty$'' is rather rigid. This raises the natural question up to which value of $\alpha$ softer material response can be encountered. 
In this section, we discuss four examples of macroscopically attainable deformations. They show that Theorem \ref{prop:rigidity}  and Corollary~\ref{cor:asymptotic_rigidity} fail for small elastic constants in the regime $\alpha\leq p$, and illustrate the effect of (local) volume preservation. For simplicity, we assume throughout this section that $\Omega\subset \R^n$ is the unit cube, i.e.~$\Omega=(0,1)^n$.

The idea behind the first two constructions for $\alpha=p$ is to bend the individual stiffer layers, first uniformly in Example~\ref{ex1}, and then in a locally volume-preserving way inspired by the bending of a stack of paper in Example~\ref{ex2}. Example~\ref{ex3} is based on a wrinkling construction for the individual layers, and shows that compression in layer direction is possible for $\alpha\in (0,p)$. Finally, we look into the effect of the local volume condition for $\alpha>p$ in Example~\ref{ex4}. 

The calculations behind these examples share a common structure and are all based on the following auxiliary result. We deliberately keep its formulation slightly more general than actually needed in the following. This facilitates the construction of an even larger variety of explicit deformations and yields immediate insight into their asymptotic properties.  As regards notation, for $\eps>0$ and $t\in \R$ we let $[t]_\eps = \eps \ceilb{\frac{t}{\eps}}-\eps + \tfrac{1+ \lambda}{2}\eps$ and write 
\begin{align*}
\double{x_\eps} =  (x_1,\ldots, x_{n-1}, [x_n]_\eps)
\quad\text{for $x\in \R^n$,}
\end{align*}
where in consideration of the layered structure $\eps\Ystiff$, $\double{x_\eps}$ corresponds to the orthogonal projection of $x$ onto the midsection of the closest stiff layer lying above.  
\begin{lemma} \label{lem: general form of gradient} 
Let $\overline{Q} = [0,1]^{n-1}\times [0, 2]$ and $1<p<\infty$. For $\eps\in (0,1)$, let $f_\eps \in C^2(\overline{Q}; \R^n)$ be such that $\abs{\partial_1 f_\eps}=1$, $\partial_1 f_\eps\in {\rm span}\{e_1, e_n\}$ and $\partial_i f_\eps = e_i$ for $i=2, \dots, n-1$, and define a Lipschitz function $u_\eps: \Omega\to \R^n$ by 
\begin{align}\label{defueps}
u_\eps(x) = f_\eps (\double{x}_\eps)+ (x_n - [x_n]_\eps) \partial_1 f_\eps^\perp (\double{x}_\eps) \quad \text{for $x\in \eps \Ystiff \cap \Omega,$}
\end{align}
and by linear interpolation in $e_n$-direction in $\eps \Ysoft\cap\Omega$. 

Then for any $\eps\in (0,1)$,
\begin{align}\label{example_energyestimate}
\int_{\eps \Ystiff\cap \Omega} \dist^p(\nabla u_\eps, SO(n))\dd{x} \leq  2^p \eps^p \norm{ \partial_{11}^2 f_\eps}_{L^\infty(\overline{Q};\R^n)}^p.
\end{align}
Moreover, if  $\lim_{\eps\to 0} \eps \norm{\nabla^2 f_\eps}_{L^\infty(\overline{Q};\R^{n\times n\times n})}=0$ and if there is $F\in L^p(\overline{Q};\R^{n\times n})$ such that either
\begin{itemize}
\item[$i)$] $\nabla f_\eps \to F$ in $L^p(\overline{Q};\R^{n\times n})$ as $\eps\to 0$, or 
\item[$ii)$] $\nabla f_\eps \weakly F$ in $L^p(\overline{Q};\R^{n\times n})$ as $\eps\to 0$  and $\partial_{n}(\nabla  f_\eps) = 0$ for all $\eps\in (0,1)$,
\end{itemize}
then 
\begin{align}\label{nablaueps}
\nabla u_\eps \weakly F \qquad \text{in $L^p(\Omega;\R^{n\times n})$.}
\end{align}
\end{lemma}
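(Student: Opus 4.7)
The proof splits cleanly into two parts: the pointwise distance estimate \eqref{example_energyestimate} and the weak convergence \eqref{nablaueps}.

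\emph{Energy estimate.} I differentiate \eqref{defueps} directly on a stiff slab. Since $\double{x_\eps}$ is constant in $x_n$ there, the chain rule yields
\begin{equation*}
\nabla u_\eps(x) = R_\eps(\double{x_\eps}) + (x_n - [x_n]_\eps)\bigl(\partial_{11}^2 f_\eps(\double{x_\eps})\bigr)^\perp \otimes e_1,
\end{equation*}
where $R_\eps(y) := \bigl(\partial_1 f_\eps(y) \mid e_2 \mid \cdots \mid e_{n-1} \mid \partial_1 f_\eps^\perp(y)\bigr)$. The hypotheses $|\partial_1 f_\eps| = 1$ and $\partial_1 f_\eps \in \mathrm{span}\{e_1,e_n\}$ make $(\partial_1 f_\eps, \partial_1 f_\eps^\perp)$ an orthonormal basis of the $e_1$--$e_n$-plane, which forces $R_\eps(y) \in SO(n)$. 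Combined with $|x_n - [x_n]_\eps| \leq \eps$ and the isometry $|v^\perp| = |v|$, this yields the pointwise bound $\dist(\nabla u_\eps, SO(n)) \leq \eps |\partial_{11}^2 f_\eps(\double{x_\eps})|$ on $\eps\Ystiff \cap \Omega$, whence \eqref{example_energyestimate} by integration.

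\emph{Convergence.} The plan is to reduce \eqref{nablaueps} to two ingredients: (a) $u_\eps \to f$ strongly in $L^p(\Omega;\R^n)$ for some $f$ with $\nabla f = F$, and (b) $\nabla u_\eps$ bounded in $L^p(\Omega)$. Given (a) and (b), the conclusion follows by distributional continuity: for any $\phi \in C_c^\infty(\Omega;\R^{n\times n})$, integration by parts gives $\int_\Omega \nabla u_\eps : \phi \dd{x} = -\int_\Omega u_\eps \cdot \mathrm{div}\,\phi \dd{x} \to \int_\Omega F : \phi \dd{x}$. To produce such $f$, I subtract the $\overline{Q}$-mean of $f_\eps$ (which leaves $\nabla u_\eps$ unchanged) and invoke Poincar\'e together with Rellich's compactness on the normalized, $W^{1,p}$-bounded sequence; the $L^p$-bound on $\nabla f_\eps$ is supplied by strong convergence in case $i)$ and by weak convergence in case $ii)$, and the gradient of the limit then coincides with $F$.

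The bulk of the remaining work is a layer-by-layer comparison. In a stiff layer, the explicit formula \eqref{defueps} and $|\partial_1 f_\eps^\perp| = 1$ give $|u_\eps(x) - f_\eps(x)| \leq \eps + |f_\eps(\double{x_\eps}) - f_\eps(x)|$, and a H\"older--Fubini estimate on the second term yields $\|u_\eps - f_\eps\|_{L^p(\eps\Ystiff\cap\Omega)} = \mathcal{O}(\eps)$. In a soft layer, $u_\eps$ is the $x_n$-linear interpolation of the traces $u^{\mathrm{bot}}, u^{\mathrm{top}}$ from adjacent stiff slabs, which differ from $f_\eps$ by analogous $\mathcal{O}(\eps)$ terms, so the same $L^p$-estimate propagates to establish (a). For (b), the stiff case is immediate from the formula above, while in a soft layer $\partial_n u_\eps = (u^{\mathrm{top}} - u^{\mathrm{bot}})/(\lambda\eps)$ is a finite-difference quotient which a Taylor expansion at the midsection between the two adjacent stiff-layer midpoints identifies with $\lambda^{-1}\partial_n f_\eps - \lambda^{-1}(1-\lambda)\partial_1 f_\eps^\perp + \mathcal{O}(\eps\|\nabla^2 f_\eps\|_\infty)$; horizontal derivatives are handled analogously. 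The main technical hinge is this soft-layer Taylor expansion, for which the assumption $\eps\|\nabla^2 f_\eps\|_\infty \to 0$ is precisely what makes the remainder uniformly negligible and keeps $\nabla u_\eps$ controlled in $L^p$ by $\nabla f_\eps$.
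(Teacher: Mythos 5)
Your proof is correct, and for the weak-convergence part it takes a genuinely different route from the paper. The paper introduces an auxiliary field $V_\eps$ that collects the leading-order pieces of $\nabla u_\eps$ in both phases, proves $\|V_\eps-\nabla u_\eps\|_{L^\infty(\Omega)}\lesssim \eps\|\nabla^2 f_\eps\|_{L^\infty}\to 0$, and then identifies $\wlim V_\eps$ directly; this forces separate treatments of cases $i)$ and $ii)$, with $ii)$ handled by a separation-of-variables/Fubini argument plus Stone--Weierstrass density of tensor products, needed because the product $(\partial_1 f_\eps^\perp)\mathbbm{1}_{\eps\Ystiff}$ has two merely weakly convergent factors. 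You instead pass to the primitive: strong $L^p$-convergence $u_\eps\to f$ (Rellich applied to the mean-normalized, $W^{1,p}$-bounded $f_\eps$, combined with your $O(\eps)$ layer-by-layer comparison of $u_\eps$ with $f_\eps$), plus an $L^p$-bound on $\nabla u_\eps$ (immediate on the stiff layers; via the finite-difference/Taylor expansion on the soft ones, where $\eps\|\nabla^2 f_\eps\|_{L^\infty}\to 0$ controls both the remainder and, in case $i)$, the discrepancy between the sampled midsection values of $\nabla f_\eps$ and its $L^p$-average over the adjacent layers), and then conclude by integrating by parts against $C_c^\infty$ test matrices and invoking density in $L^{p'}$. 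Your route buys a unified treatment of $i)$ and $ii)$ — the distributional-continuity step is indifferent to whether $\nabla f_\eps$ converges strongly or only weakly, so the Fubini/Stone--Weierstrass machinery disappears — at the cost of an appeal to compact embedding and a somewhat more delicate transfer of $L^p(\overline{Q})$-bounds on $\nabla f_\eps$ to the layerwise-sampled field on $\eps\Ysoft\cap\Omega$, which needs to be written out carefully. For the energy estimate your argument is the same as the paper's; your bound $|x_n-[x_n]_\eps|\leq\eps$ on the stiff layers is in fact sharper than the paper's uniform $2\eps$, with the lemma's constant $2^p$ absorbing the slack either way.
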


\begin{proof} 
We observe first that $\partial_i f_\eps = e_i$ for $i=2, \ldots, n-1$ and $\eps\in (0,1)$, hence, also $Fe_i=e_i$, both in case $i)$ and $ii)$. 
By definition, the functions $u_\eps$ are continuously differentiable on the connected components of $\eps \Ystiff\cap \Omega$ and $\eps \Ysoft\cap\Omega$. Then
\begin{align}\label{nablaeps_rig}
\nabla u_\eps(x) = \partial_1 f_\eps(\double{x}_\eps) \otimes e_1 + (x_n-[x_n]_\eps)\partial_{11}^2f^\perp_\eps (\double{x}_\eps) \otimes e_1 + \textstyle\sum_{i=2}^{n-1} e_i\otimes e_i + \partial_1 f^\perp_\eps(\double{x}_\eps) \otimes e_n
\end{align}
for $x\in \eps\Ystiff\cap \Omega$, and straight-forward calculation yields the gradients for $x\in \eps\Ysoft\cap\Omega$, 
 \begin{align*}
\nabla u_\eps(x) &= \big(\partial_1 f_\eps(\double{x}_\eps -\eps e_n) + \tfrac{1-\lambda}{2}\eps \partial_{11}^2 f^\perp_\eps(\double{x}_\eps -\eps e_n)\big) \otimes e_1 \\
&\hspace{1cm} + \tfrac{1}{\lambda\eps}\big(x_n - \ceilb{\tfrac{x_n}{\eps}}\eps + \eps\big)\big(\partial_1 f_\eps(\double{x}_\eps) - \partial_{1} f_\eps(\double{x}_\eps-\eps e_n) \big) \otimes e_1 \\
&\hspace{1cm}
-\tfrac{1-\lambda}{2\lambda} \big(x_n- \ceilb{\tfrac{x_n}{\eps}}\eps + \eps\big) \big(\partial_{11}^2 f^\perp_\eps(\double{x}_\eps) +\partial_{11}^2 f^\perp_\eps (\double{x}_\eps-\eps e_n) \big) \otimes e_1 \\
& \hspace{1cm} + \textstyle \sum_{i=2}^{n-1} e_i\otimes e_i + \tfrac{1}{\lambda\eps}\big(f_\eps(\double{x}_\eps) -f_\eps(\double{x}_\eps-\eps e_n) \big) \otimes e_n \\
&\hspace{1cm}
-\tfrac{1-\lambda}{2\lambda}\big(\partial_1 f_\eps^\perp (\double{x}_\eps) +\partial_1 f_\eps^\perp (\double{x}_\eps -\eps e_n) \big)\otimes e_n. 
 \end{align*}

In view of~\eqref{nablaeps_rig} and the observation that $\partial_1f_\eps(\double{x}_\eps)\otimes e_1 + \sum_{i=2}^{n-1} e_i\otimes e_i + \partial_1 f_\eps^\perp (\double{x}_\eps) \otimes e_n \in SO(n)$ for all $x\in \Omega$ due to $\abs{\partial_1 f_\eps} = \abs{\partial_1 f_\eps^\perp}=1$ and $\partial_1 f_\eps\in {\rm span}\{e_1, e_n\}$, the elastic energy contribution on the stiffer layers can be estimated by 
\begin{align*}
\int_{\eps\Ystiff \cap \Omega} \dist^p(\nabla u_\eps,SO(n)) \dd x & \leq  \int_{\eps\Ystiff\cap \Omega}\big| (x_n-[x_n]_\eps) \partial_{11}^2 f_\eps^\perp(\double{x}_\eps) \otimes e_1\big|^p \dd x \\
&\leq \|\partial_{11}^2 f_\eps\|^p_{L^\infty(\overline{Q};\R^n)}  \int_{\Omega}\big| x_n-[x_n]_\eps\big|^p\dd x. 
\end{align*}
This implies~\eqref{example_energyestimate}, since 
\begin{align}\label{eq55}
|x_n-[x_n]_\eps| = \eps \big| \tfrac{x_n}{\eps}  - \ceilb{\tfrac{x_n}{\eps}} +1 -\tfrac{1+\lambda}{2}\big|\leq 2\eps \quad \text{for all $x\in \Omega$. }
\end{align}

For the proof of~\eqref{nablaueps}, consider the auxiliary fields $V_\eps\in L^\infty(\Omega;\R^{n\times n})$ given by
\begin{align}\label{Veps}
\begin{split}
V_\eps& = \partial_1 f_\eps \otimes e_1 + \textstyle \sum_{i=2}^{n-1} e_i\otimes e_i+ \big(\partial_1 f_\eps^\perp\otimes e_n\big) \mathbbm{1}_{\eps\Ystiff\cap \Omega}  \\ &\hspace{3cm}+ \big( (\tfrac{1}{\lambda}\partial_n f_\eps - \tfrac{1-\lambda}{\lambda} \partial_1 f_\eps^\perp) \otimes e_n \big)\mathbbm{1}_{\eps\Ysoft\cap \Omega}.
\end{split}
\end{align}
Recall that the indicator function associated with a set $E\subset \R^n$ is denoted by $\mathbbm{1}_E$.   
We will show that
\begin{align}\label{convergenceVeps}
V_\eps-\nabla u_\eps \to 0 \qquad \text{in $L^\infty(\Omega;\R^{n\times n})$.} 
\end{align}
Indeed, along with the mean value theorem and~\eqref{eq55}, one obtains for $x$ in the interior of $\eps\Ystiff \cap \Omega$ that
\begin{align*}
\abs{\nabla u_\eps(x) -V_\eps(x)} & \leq \abs{\big(\partial_1f_\eps(\double{x}_\eps) -\partial_1 f_\eps(x)\big)\otimes e_1} + \abs{(x_n-[x_n]_\eps)\partial_{11}^2 f_\eps^\perp(\double{x}_\eps) \otimes e_n}  \\ & \qquad\qquad + \abs{\big(\partial_1 f_\eps^\perp(\double{x}_\eps) - \partial_1f_\eps^\perp (x)\big) \otimes e_n}\\
 & \leq |x_n-[x_n]_\eps| \big(\norm{\partial_{1n}^2 f_\eps}_{L^\infty(\overline{Q};\R^n)} + \norm{\partial_{11}^2 f_\eps}_{L^\infty(\overline{Q};\R^n)} + \norm{\partial_{1n}^2 f_\eps^\perp}_{L^\infty(\overline{Q};\R^n)}\big)\\ & \leq 6\eps \norm{\nabla^2 f_\eps}_{L^\infty(\overline{Q};\R^{n\times n\times n})},
\end{align*}
and similarly for $x\in \eps\Ysoft \cap \Omega$, 
\begin{align*}
\abs{\nabla u_\eps(x)-V_\eps(x)} & \leq \abs{\nabla u_\eps e_1(x)-V_\eps e_1(x)} + \abs{\nabla u_\eps e_n(x)-V_\eps e_n(x)} \\ & \leq  3\eps \bigl(\tfrac{1+\lambda}{\lambda}\bigr)\norm{\partial_{1n}^2 f_\eps}_{L^\infty(\overline{Q};\R^n)}  + \tfrac{2}{\lambda} \eps \norm{\partial_{11}^2 f_\eps^\perp}_{L^\infty(\overline{Q};\R^n)} \\
&\qquad\qquad + \tfrac{3}{\lambda} \eps \norm{\partial_{1n}^2 f_\eps}_{L^\infty(\overline{Q};\R^n)}  +  \tfrac{1}{\lambda} \big| \partial_n f_\eps(x_1, \xi) - \partial_n f_\eps(x)\big|\\
& \leq 6\eps \bigl(\tfrac{1+\lambda}{\lambda}\bigr) \bigl( \norm{\partial_{1n}^2 f_\eps}_{L^\infty(\overline{Q};\R^{n})} + \norm{\partial_{11}^2 f_\eps}_{L^\infty(\overline{Q};\R^n)} +  \norm{\partial_{nn}^2 f_\eps}_{L^\infty(\overline{Q};\R^{n})} \bigr)
\\ & \leq 18\eps \big(\tfrac{1+\lambda}{\lambda}\big) \norm{\nabla^2 f_\eps}_{L^\infty(\overline{Q};\R^{n\times n\times n})}
\end{align*}
with some $\xi\in ([x_n]_\eps-\eps, [x_n]_\eps)$. Accounting for $\lim_{\eps\to0}\eps \norm{\nabla^2 f_\eps}_{L^\infty(\overline{Q};\R^{n\times n\times n})}=0$ leads to~\eqref{convergenceVeps}.

In case $i)$, it follows from~\eqref{Veps} along with a weak-strong convergence argument that
\begin{align}\label{weakconvergence_Veps}
\begin{split}
V_\eps \weakly Fe_1\otimes e_1 &+ \textstyle \sum_{i=2}^{n-1} e_i\otimes e_i + (1-\lambda)(Fe_1)^\perp \otimes e_n \\ &+ Fe_n\otimes e_n - (1-\lambda)(Fe_1)^\perp \otimes e_n = F \quad \text{in $L^1(\Omega;\R^{n\times n})$,} 
\end{split}
\end{align} 
where we have used in particular that $\mathbbm{1}_{\eps \Ystiff \cap\Omega} \weaklystar (1-\lambda)$ and $\mathbbm{1}_{\eps \Ysoft \cap \Omega} \weaklystar \lambda$ in $L^\infty(\Omega)$. 

Combining ~\eqref{weakconvergence_Veps} and~\eqref{convergenceVeps} shows that $\nabla u_\eps\weakly F$ in $L^1(\Omega;\R^{n\times n})$. Since $(\nabla u_\eps)_\eps$ is uniformly bounded in $L^p(\Omega;\R^{n\times n})$ by~\eqref{example_energyestimate} and the requirement that $\eps\norm{\partial_{11}^2 f_\eps}_{L^\infty(\overline{Q}; \R^{n})} \to 0$, we finally infer~\eqref{nablaueps}, which finishes the proof under the assumption of $i)$. 

If assumption $ii)$ is satisfied, then $\partial_1 f_\eps^\perp$ depends only on $x_1$. Since $\mathbbm{1}_{\eps\Ystiff}$ on the other hand is constant in the $x_1$-variable, we observe a separation of variables in the product $(\partial_1 f_\eps^\perp) \mathbbm{1}_{\eps\Ystiff}$. In light of this observation, consider test functions $\varphi\in C^0(\overline{\Omega};\R^n)$ of the form $\varphi(x)=(\phi\otimes \psi)(x):=\phi(x_1)\psi(x_2, \ldots, x_n)$ for $x\in \overline{\Omega}$ with $\phi\in C^0([0,1];\R^n)$ and $\psi\in C^0([0,1]^{n-1})$. 
Then, due to Fubini's theorem and the lemma on weak convergence of rapidly oscillating periodic functions (see e.g.~\cite[Section~2.3]{CiD99}) it follows that 
\begin{align}\label{con98}
\int_{\Omega}(\partial_1 f_\eps^\perp \cdot \varphi)\mathbbm{1}_{\eps\Ystiff\cap \Omega}\dd{x} & = \Big(\int_{[0,1]}    \partial_1 f_\eps^\perp \cdot \phi \dd{x_1}\Big) \Bigl(\int_{[0,1]^{n-1}} \mathbbm{1}_{\eps\Ystiff\cap \Omega}\, \psi\dd{x_2}\ldots\dd{x_n}\Bigr) \nonumber \\ & \rightarrow \Big(\int_{[0,1]}(Fe_1)^\perp \phi(x_1)\dd{x_1} \Big)\Big(\int_{[0,1]^{n-1}} (1-\lambda) \psi\dd{x_2}\ldots \dd{x_n}\Big) \\ & =(1-\lambda) \int_{\Omega} (Fe_1)^\perp\cdot \varphi \dd{x}\qquad \text{as $\eps\to 0$.} \nonumber
\end{align}
We recall that as a corollary of the Stone-Weierstrass theorem (see e.g.~\cite[Theorem~7.32]{Rud76}) and the density of $C^0(\overline{\Omega};\R^n)$ in $L^q(\Omega;\R^n)$ with $1\leq q<\infty$, 
the span of functions $\phi\otimes \psi$ is dense in $L^q(\Omega;\R^{n})$.
Consequently, we infer from~\eqref{con98} that
 \begin{align*}
 (\partial_{1} f_\eps^\perp)\mathbbm{1}_{\eps\Ystiff\cap \Omega} \weakly (1-\lambda)(Fe_1)^\perp \quad \text{in $L^p(\Omega;\R^n)$.}
 \end{align*} 
Then the third term in~\eqref{Veps} converges weakly to $(1-\lambda) (Fe_1)^\perp\otimes e_n$ in $L^p(\Omega;\R^{n\times n})$. Arguing similarly for the other product terms in~\eqref{Veps} eventually yields $V_\eps\weakly F$ in $L^p(\Omega;\R^{n\times n})$. 
In conjunction with~\eqref{convergenceVeps} this proves~\eqref{nablaueps}, and thus the statement in case $ii)$.
\end{proof}

As announced at the beginning of the section, we will next discuss four specializations of Lemma~\ref{lem: general form of gradient}, using the same notations. 
These examples illustrate the optimality of the scaling regimes in Theorem~\ref{prop:rigidity} and Corollary~\ref{cor:asymptotic_rigidity}.

\begin{example}[Uniform bending of the individual stiffer layers]\label{ex1}
Let $g:[0,1]\to {\rm span}\{e_1, e_n\}\subset \R^n$ be a $C^2$-curve parametrized by arc length, i.e.,~$\abs{g'(t)}=1$ for all $t\in [0,1]$. 
We follow Lemma \ref{lem: general form of gradient} to define deformations $u_\eps$ by choosing for all $\eps\in (0,1)$,
\begin{align}\label{f1}
f_\eps(x) = f(x) := g(x_1) + \sum_{i=2}^{n} x_ie_i, \quad x\in \overline{Q}. 
\end{align}
This choice of $f$ is motivated by uniform bending of the individual stiffer layers in the two-dimensional setting, where the curve $g$ describes the bending of the mid-fibers, see Figure~\ref{Figure: Uniform bending}.

Then, Lemma~\ref{lem: general form of gradient} implies that for any constant $C> 2^p\norm{g''}_{L^\infty(0,1;\R^n)}^p$, 
\begin{align*}
\int_{\eps\Ystiff\cap\Omega} \dist^p(\nabla u_\eps, SO(n)) \dd{x} \leq  C\eps^{p},
\end{align*}
which shows that the sequence $(u_\eps)_\eps$ has finite elastic energy on the stiffer component for $\alpha=p$.
As for the gradient of the limit deformation $u$, we infer from version i) of Lemma~\ref{lem: general form of gradient} that $\nabla u_\eps\weakly \nabla u=\nabla f$ in $L^p(\Omega;\R^{n\times n})$.  In view of~\eqref{f1}, 
\begin{align*}
\nabla u(x)= g'(x_1) \otimes e_1 + \sum_{i=2}^{n} e_i \otimes e_i =
R(x) + \tilde a(x) \otimes e_n, \quad x\in \Omega,
\end{align*}
with $R(x)=g'(x_1)\otimes e_1 + \sum_{i=1}^{n-1} e_i\otimes e_i  + g'(x_1)^\perp\otimes e_n$ and $\tilde a(x)=e_n-g'(x_1)^\perp$. 
 Clearly, for general $g$, $\partial_1 R\neq 0$, so that the limit deformation $u$ does not have the form~\eqref{representation_u} obtained in Theorem~\ref{prop:rigidity} for the regime $\alpha>p$. 

We remark that the limit deformation $u$ is not locally volume preserving for $g$ with non-trivial curvature, since $\det \nabla u= g'\cdot e_1 \not\equiv 1$. 

\begin{figure}[ht] \label{Figure: Uniform bending}
\begin{center}
\begin{tikzpicture}
\begin{scope}[scale = 0.5]
    \clip (45:5)  -- (45:6) -- ($ (0,2) + (45:5cm) $) -- ($ (0,2) + (45:6cm) $)  -- ($ (0,4) + (45:5cm) $) -- ($ (0,4) + (45:6.2cm) $)  -- (0,11) -- ($ (0,4) + (135:6.2cm) $)  -- ($ (0,4) + (135:5cm) $) -- ($ (0,2) + (135:6cm) $)  -- ($ (0,2) + (135:5cm) $) -- (135:6) -- (135:5) -- cycle;

\draw[fill = gray!30] (6,4) arc (0:180 : 6cm);
\draw[thick] (0,4) -- ++(45:6);
\draw[thick] (0,4) -- ++(135:6);
\draw[fill = white ] (0,4) circle (5cm);
\draw[very thick] (5.5,4) arc (0:180 : 5.5cm);

\draw[fill = gray!30] (6,2) arc (0:180 : 6cm);
\draw[thick] (0,2) -- ++(45:6);
\draw[thick] (0,2) -- ++(135:6);
\draw[fill = white ] (0,2) circle (5cm);
\draw[thick] (5.5,2) arc (0:180 : 5.5cm);

\draw[fill = gray!30] (6,0) arc (0:180 : 6cm);
\draw[thick] (0,0) -- ++(45:6);
\draw[thick] (0,0) -- ++(135:6);
\draw[fill = white ] (0,0) circle (5cm);
\draw[thick] (5.5,0) arc (0:180 : 5.5cm);

\foreach \i in {0,3,...,90}
	\draw  ($ (0,2) + (45+\i:5cm) $) -- ($ (0,0) + (45+\i:6cm) $);
	
\foreach \i in {0,3,...,90}
	\draw  ($ (0,4) + (45+\i:5cm) $) -- ($ (0,2) + (45+\i:6cm) $);

\coordinate (a) at (45:5cm);
\coordinate (atwo) at (45:6cm);
\coordinate (alabel) at (42:5cm);
\coordinate (b) at (135:5cm);
\coordinate (c) at (0,7.5);

\end{scope}

\draw[white]  (a) -- (b);
\draw[<->] (alabel) -- ( $(alabel) + (atwo) - (a)$);
\draw ($(alabel)+(0.3,0.08)$) node{$\eps$};

\draw[->, thick] ($ (c) +(3,-0.3) $) -- +(2,0);
\draw ($ (c) +(4,0) $) node{$\eps \rightarrow 0$};
\draw(-2,5.0) node{$u_\eps$};
\draw(6.4,4.6) node{$g$};
\draw(7.9,1.5) node{$u(x) = g(x_1) + x_2e_2 +c$, $c\in \R^2$};

\begin{scope}[scale = 0.5, xshift = 16cm]
    \clip (45:5.5)-- ($ (0,4.2) + (45:5.5cm) $)  -- (0,11) -- ($ (0,4.2) + (135:5.5cm) $)  -- (135:5.5) -- cycle;
    
\draw[very thick, fill = gray!10] (5.5,4) arc (0:180 : 5.5cm);
\draw[thick, fill = white] (5.5,0) arc (0:180 : 5.5cm);
\draw[very thick] (45:5.5)-- ($ (0,4) + (45:5.5cm) $)  ;
\draw[very thick] ($ (0,4) + (135:5.5cm) $)  -- (135:5.5);
\end{scope}
\end{tikzpicture}
\end{center}
\begin{caption} {Illustration of the deformations of Example~\ref{ex1} for $n=2$, with uniform bending of the stiffer layers described by $g(t) = \sin (t - \tfrac{1}{2}) e_1+ \cos (t- \tfrac{1}{2}) e_2$ for $t\in [0,1]$.}
\end{caption}
\end{figure}
\end{example}

To recover limit deformations that satisfy the local volume constraint, a slightly more involved bending construction as in the next example is needed. 

\begin{example}[Macroscopically volume-preserving bending deformations]\label{ex2}
In the context of Lemma~\ref{lem: general form of gradient}, we consider for $\eps\in (0,1)$ the functions 
\begin{align*}
f_\eps(x)= f(x) := (x_n+ 1) g\Big(\frac{x_1}{x_n + 1}\Big) + \sum_{i=2}^{n-1} x_ie_i, \quad x\in \overline{Q},
\end{align*}
with $g: [0,1] \rightarrow {\rm span}\{e_1, e_n\} \subset \R^n$ a $C^2$-curve parametrized by arclength.

Then the sequence $(u_\eps)_\eps$ defined by~\eqref{defueps} in the stiffer component and by linear interpolation in the softer one satisfies
\begin{align*}
\int_{\eps\Ystiff\cap \Omega} \dist^p(\nabla u_\eps, SO(2))\dd{x} \leq  2^p\norm{g''}_{L^\infty(0,1;\R^n)}^p \eps^p,
\end{align*}
and we obtain that $\nabla u_\eps\weakly \nabla u=\nabla f$ in $L^p(\Omega;\R^{n\times n})$. 
 Due to
\begin{align*}
\nabla f(x) = g'\Big(\frac{x_1}{x_n+1}\Big)\otimes e_1 + \sum_{i=2}^{n-1} e_i\otimes e_i -\frac{x_1}{x_n+1}g'\Big(\frac{x_1}{x_n+1}\Big)\otimes e_n  + g\Big( \frac{x_1}{x_n+1}\Big)\otimes e_n, 
\end{align*}
one can rewrite the gradient of the limit deformation $u$ with the help of a map of rotations $R\in L^\infty(\Omega;SO(n))$ defined for $x\in \Omega$ by $R(x)e_1=g'(\frac{x_1}{x_n+1})$ and $R(x)e_i=e_i$ for $i=2, \ldots, n-1$. Precisely, 
\begin{align*}
\nabla u= R + \tilde a\otimes e_n
\end{align*} 
with $\tilde a(x) =  -\frac{x_1}{x_n+1}g'\big(\frac{x_1}{x_n+1}\big)  + g\big( \frac{x_1}{x_n+1}\big) -g' \big( \frac{x_1}{x_n+1}\big)^\perp$ for $x\in \Omega$. The rotations $R$ depend non-trivially on $x_1$, hence, the limit map $u$ is not in compliance with Theorem~\ref{prop:rigidity}.
Since $\det \nabla u=\det \nabla f = -g'\cdot g^\perp$, the deformation $u$ is locally volume preserving if we chose $g$ such that $g'\cdot g^\perp \equiv 1$.

An simple deformation of this type, which is intuitively inspired by the bending of a stack of paper, is depicted in Figure~\ref{Figure: volume-preserving bending}. 
 \begin{figure}[ht]
\begin{center}
\begin{tikzpicture}
\begin{scope}[scale = 0.5]
   \clip (40:4)  -- (40:5) -- ($ (0,-5.75) + (71.5:11.75) $) -- ($ (0,-5.75) +  (71.5:12.75) $)  -- ($ (0,-11.5) + (78.75:19.5)$) -- ($ (0,-11.5) + (78.75:20.7)$)  -- (0,12) -- ($ (0,-11.5) + (101.25:20.7) $)  -- ($ (0,-11.5) +  (101.25:19.5) $) -- ($ (0,-5.75) + (108.5:12.75) $)  --  ($ (0,-5.75) + (108.5:11.75) $)  -- (140:5) -- (140:4)-- cycle;

\draw[fill = gray!30] (20.5,-11.5) arc (0:180 : 20.5cm);
\draw[thick] (0,-11.5) -- ++(78.75:20.5);
\draw[thick] (0,-11.5) -- ++(101.25:20.5);
\draw[fill = white ] (0,-11.5) circle (19.5cm);
\draw[thick] (20,-11.5) arc (0:180 : 20cm);

\draw[fill = gray!30] (12.75,-5.75) arc (0:180 : 12.75cm);
\draw[thick] (0,-5.75) -- ++(71.5:12.75);
\draw[thick] (0,-5.75) -- ++(108.5:12.75);
\draw[fill = white ] (0,-5.75) circle (11.75cm);
\draw[thick] (12.25,-5.75) arc (0:180 : 12.25cm);

\draw[fill = gray!30] (5,0) arc (0:180 : 5cm);
\draw[thick] (0,0) -- ++(40:5);
\draw[thick] (0,0) -- ++(140:5);
\draw[fill = white ] (0,0) circle (4cm);
\draw[thick] (4.5,0) arc (0:180 : 4.5cm);

\foreach \i in {0,0.75,...,22.5}
	\draw  ($ (0,-5.75) + (71.5+1.644444*\i:12.75cm) $) -- ($ (0,-11.5) + (78.75+\i:19.5cm) $);
	
\foreach \i in {0,0.75,...,22.5}
	\draw  ($ (0,0) + (40+2.7027*1.64444*\i:5cm) $) -- ($ (0,-5.75) + (71.5+1.64444*\i:11.75cm) $);

\coordinate (a) at (40:4cm);
\coordinate (b) at (140:4cm);
\coordinate (atwo) at (40:5cm);
\coordinate (alabel) at (37.5:4cm);
\coordinate (c) at (0,6.5);
\end{scope}

\draw[white]  ($(a)+(0,-0.00)$) -- ($(b)+(0,-0.00)$);
\draw[<->] (alabel) -- ( $(alabel) + (atwo) - (a)$);
\draw ($(alabel)+(0.3,0.03)$) node{$\eps$};
\draw[->, thick] ($ (c) +(3,-0.3) $) -- +(2,0);
\draw ($ (c) +(4,0) $) node{$\eps \rightarrow 0$};

\draw(-2,4.7) node{$u_\eps$};
\draw(6.4,4.5) node{$u$};

\begin{scope}[scale = 0.5, xshift = 16cm]
\draw[thick, fill = gray!10] (-3.4472,2.892544) arc (140:40: 4.5cm) --  plot [domain=4.5:20,smooth,variable=\r] ({\r*sin(180*1.25/\r)},{\r*cos(180*1.25/\r)-(11.5/15.5)*(\r-4.5)}) -- (3.9018,8.1157) arc (78.75:101.25 : 20cm) -- plot [domain=20:4.5,smooth,variable=\r] (-{\r*sin(180*1.25/\r)},{\r*cos(180*1.25/\r)-(11.5/15.5)*(\r-4.5)});
\end{scope}
\end{tikzpicture}
\end{center}
\begin{caption}{Illustration of the deformations of Example~\ref{ex2} for $n=2$, with $g(t) = \sin (t-\frac{1}{2}) e_1 + \cos(t -\frac{1}{2}) e_2$ for $t\in [0,1]$.  Notice also that the limit deformation $u$ satisfies the local volume constraint, whereas the bending deformations of the individual layers in the left picture do not.}\label{Figure: volume-preserving bending} \end{caption}
\end{figure}
\end{example}

Next, we discuss an example in the regime $\alpha<p$, where macroscopic shortening in $e_1$-direction occurs due to wrinkling of the stiffer layers. A similar effect occurs in the context of plate theory, cf.~\cite[Section~5]{FJM02}.

\begin{example}[Wrinkling of stiffer layers]\label{ex3}
Let $\beta\in \R$, $\gamma\in (0,1)$, and $g:[0,1] \to {\rm span}\{e_1, e_n\} \subset \R^n$ be a $1$-periodic $C^2$-function with $|g'(t)|=1$ for all $t\in \R$. We define $g_\eps:[0,1]\to \R^n$ by $g_\eps(t) = \eps^\gamma g(\eps^{-\gamma}t)$ for $t\in [0,1]$ and $\eps\in (0,1)$, and observe that by the weak convergence of periodically oscillating sequences, 
\begin{align*}
g_\eps'\weakly \bar{g}':=\int_0^1 g'(t)\dd{t} = g(1)-g(0)\qquad \text{in $L^1(0,1;\R^n)$.} 
\end{align*}
Unless $g'$ is constant, $|\bar{g}'| <1$. Under these assumptions, the functions
\begin{align*}
f_\eps(x)  = g_\eps(x_1) + \beta x_n e_n + \sum_{i=2}^{n-1} x_i e_i, \quad x\in \overline{Q},
\end{align*}
meet the requirements of Lemma~\ref{lem: general form of gradient} with assumption $ii)$ and $F = \bar{g}'\otimes e_1 + \sum_{i=2}^{n-1} e_i\otimes e_i+  \beta e_n\otimes e_n$. 
Thus, for $u_\eps$ as in Lemma~\ref{lem: general form of gradient},
\begin{align*}
\int_{\eps\Ystiff \cap \Omega} \dist^p(\nabla u_\eps,SO(n)) \dd{x} \leq 2^p\eps^p \norm{g''_\eps}^p_{L^\infty(0,1;\R^n)} \leq 2^p\eps^{p(1-\gamma)} \norm{g''}^p_{L^\infty(0,1;\R^n)} \leq C\eps^{p(1-\gamma)},
\end{align*}
and $\nabla u_\eps\weakly \nabla u=F$ in $L^1(\Omega;\R^{n\times n})$. In particular, $|(\nabla u)e_1| = |Fe_1|= |\bar{g}'| <1$. 

Since $\det \nabla u = \det F = \beta (\bar{g}'\cdot e_1)$, (local) volume preservation of the limit deformation $u$ can be achieved by a suitable choice of $\beta$ and $g$. Graphically speaking, $\beta$ can be viewed as a stretching factor in $e_n$-direction that compensates the loss of length in $e_1$-direction due to the asymptotic shortening of the mid-fibers in the stiffer layers, so that overall volume is preserved. A specific case of this wrinkling construction is depicted in Figure~\ref{figure:ex3}.

 \begin{figure}[ht]
\begin{center}
\begin{tikzpicture}
\begin{scope}[scale = 0.5]

\foreach \j in {0,2.5,5} 
{

\foreach \i in {0}
{
\draw[fill = gray!30,draw =  gray!30] ($(0,\j)+(\i,0)+(3.853553,0.353553)$) -- ($(0,\j)+(\i,0)+(3.146447,-0.353553)$)  arc (225:315 : 1.56066) --  ($(0,\j)+(\i,0)+(5,0)+(-0.353553,0.353553)$) -- cycle;
\draw[thick, draw = white]  ($(0,\j)+(\i,0)+(3.853553,0.353553)$) --  ($(0,\j)+(\i,0)+(7,0)+(-0.353553,0.353553)$);
}

\foreach \i in {0,5}
{
\draw[fill = gray!30, draw =gray!30]  ($(0,\j)+(\i,0)+(0.353553,-0.353553)$) -- ($(0,\j)+(\i,0)+(1.353553,0.646447)$) -- ($(0,\j)+(\i,0)+(0.646447,1.353553)$) -- ($(0,\j)+(\i,0)+(-0.353553,0.353553)$) -- cycle;
\draw[fill = gray!30,draw =  gray!30] ($(0,\j)+(\i,0)+(2.146447,0.646447)$) -- ($(0,\j)+(\i,0)+(2.853553,1.353553)$) arc (45:135 : 1.56066) --  ($(0,\j)+(\i,0)+(1.353553,0.646447)$) -- cycle;
\draw[thick, draw = white] ($(0,\j)+(\i,0)+(1.353553,0.646447)$)--($(0,\j)+(\i,0)+(2.146447,0.646447)$);
\draw[thick] ($(0,\j)+(\i,0)+(0,0)$) -- ($(0,\j)+(\i,0)+(1,1)$); 
\draw ($(0,\j)+(\i,0)+(0.353553,-0.353553)$) -- ($(0,\j)+(\i,0)+(1.353553,0.646447)$);
\draw ($(0,\j)+(\i,0)+(-0.353553,0.353553)$) -- ($(0,\j)+(\i,0)+(0.646447,1.353553)$);

\draw[fill = gray!30, draw =  gray!30] ($(0,\j)+(\i,0)+(2.146447,0.646447)$) -- ($(0,\j)+(\i,0)+(3.146447,-0.353553)$) -- ($(0,\j)+(\i,0)+(3.853553,0.353553)$) -- ($(0,\j)+(\i,0)+(2.853553,1.353553)$) -- cycle;
\draw[thick] ($(0,\j)+(\i,0)+(2.5,1)$) -- ($(0,\j)+(\i,0)+(3.5,0)$);
\draw ($(0,\j)+(\i,0)+(2.146447,0.646447)$) -- ($(0,\j)+(\i,0)+(3.146447,-0.353553)$);
\draw ($(0,\j)+(\i,0)+(2.853553,1.353553)$) -- ($(0,\j)+(\i,0)+(3.853553,0.353553)$);

\draw ($(0,\j)+(\i,0)+(2.853553,1.353553)$) arc (45:135 : 1.56066);
\draw[thick] ($(0,\j)+(\i,0)+(2.5,1)$) arc (45:135 : 1.06066);
\draw[fill = white] ($(0,\j)+(\i,0)+(2.146447,0.646447)$)  arc (45:135 : 0.56066);}

\foreach \i in {0}
{
\draw[thick] ($(0,\j)+(\i,0)+(3.5,0)$) arc (225:315 : 1.06066);
\draw ($(0,\j)+(\i,0)+(3.146447,-0.353553)$)  arc (225:315 : 1.56066);
\draw[fill = white] ($(0,\j)+(\i,0)+(3.853553,0.353553)$) arc (225:315 : 0.56066);}

\draw ($(0,\j)+(0.353553,-0.353553)$) -- ($(0,\j)+(-0.353553,0.353553)$);
\draw ($(0,\j)+(8.146447,-0.353553)$) -- ($(0,\j)+(8.853553,0.353553)$);
}

\foreach \j in {0,2.5} 
{
	\foreach \i in {0,5}
 	{
		\foreach \r in {0,0.3,...,1}
		{
			   \draw ($(0,\j)+(\i,0)+(-0.353553,0.353553)+\r*(1,1)$) --($(0,\j)+(\i,2.5)+(0.353553,-0.353553)+\r*(1,1)$);
			   \draw ($(0,\j)+(\i,0)+(2.853553,1.353553)+\r*(1,-1)$) --($(0,\j)+(\i,2.5)+(2.146447,0.646447)+\r*(1,-1)$);
		}
		   \foreach \l in {55, 65,...,125}
		\draw  ($(0,\j)+(\i,0)+(1.75,0.25) + (\l : 1.56066) $) -- ($(0,\j)+(\i,0)+(1.75,2.75) + (\l : 0.56066) $);	
	}
}

   \foreach \l in {235, 245,...,305}
		\draw  ($(4.25,0.75) + (\l : 0.56066) $) -- ($(4.25,3.25) + (\l : 1.56066) $);	
   \foreach \l in {235, 245,...,305}
		\draw  ($(4.25,3.25) + (\l : 0.56066) $) -- ($(4.25,5.75) + (\l : 1.56066) $);

\draw[<->] ($(0,0)+(5,0)+(3.853553,0.353553)+(0.2,-0.2)$) -- ($(0,0)+(5,0)+(3.146447,-0.353553)+(0.2,-0.2)$);
\draw ($(0,0)+(5,0)+(3.5,0)+(0.45,-0.45)$) node{$\eps$};

\coordinate (c) at (4.25,4);
\end{scope}

\draw[->, thick] ($ (c) +(3,-0.3) $) -- +(2,0);
\draw ($ (c) +(4,0) $) node{$\eps \rightarrow 0$};

\begin{scope}[scale = 0.5, xshift = 16cm]
\draw[thick,fill = gray!10]  (0,0.5) rectangle (8.5,6);
\draw[dashed]  (-1,1) rectangle (9.5,5.5);

\draw(-15.3,7.3) node{$u_\eps$};
\draw(0,7) node{$u$};
\end{scope}
\end{tikzpicture}
\end{center}
\begin{caption} {Illustration of the deformation in Example~\ref{ex3} for $n=2$. \label{figure:ex3} 
}\end{caption}
\end{figure}
\end{example}

Our last example highlights the role of the local volume constraint of the limit deformation in the regime $\alpha\geq 0$. In particular, it shows that for $\alpha>p$ local volume preservation of the limit deformation is necessary to obtain asymptotic rigidity in the sense of Corollary~\ref{cor:asymptotic_rigidity}.

\begin{example}[Rotation of stiffer layers]\label{ex4}
Let $R \in C^1([0,2];SO(n))$ with $Re_i=e_i$ for $i=2, \ldots, n-1$. For each $\eps\in (0,1)$, we set
\begin{align*}
f_\eps(x)=f(x):= (x_1-\tfrac{1}{2}) R(x_n)e_1 + \tfrac{1}{2}e_1 +\sum_{i=2}^n x_ie_i, \quad x\in \overline{Q},
\end{align*}
and take $u_\eps$ as defined in Lemma~\ref{lem: general form of gradient}. 
Since $\partial_{11} f=0$, it follows from~\eqref{example_energyestimate} that 
\begin{align*}
\int_{\eps\Ystiff\cap \Omega} \dist^p(\nabla u_\eps,SO(n))\dd{x}=0
\end{align*} 
for any $\eps\in (0,1)$. 
Moreover, $\nabla u_\eps\weakly \nabla u= \nabla f$ in $L^p(\Omega;\R^{n\times n})$, so that
\begin{align*}
\nabla u(x) & = R(x_n) e_1 \otimes e_1 + \sum_{i=2}^n e_i\otimes e_i + \big(x_1 -\tfrac{1}{2}\big)R'(x_n)e_1\otimes e_n \\ 
& = R(x_n) + R'(x_n)x\otimes e_n   + d(x_n)\otimes e_n, 
\end{align*}
where $d(t) = - \frac{1}{2} R'(t) e_1 - R'(t) t e_n$ for $t\in (0,1)$. Hence, we obtain 
\begin{align*}
u(x) = R(x_n) x + b(x_n) 
\end{align*}
with $b(t) = -\frac{1}{2} Re_1 - \int_0^{t} sR'(s)e_n \dd{s} + c$ for $t\in (0,1)$ and $c\in \R^n$. 
 It is now immediate to see that $u$ has the form stated in Theorem~\ref{prop:rigidity}, but neither is $Re_n$ constant nor is the local volume condition satisfied in general.

In $2$d, this construction corresponds to a $x_2$-dependent rotation of the individual stiffer layers around their barycenters, see Figure~\ref{Figure: rotation}.

\begin{figure}[ht] 
\begin{center}
\begin{tikzpicture}
\begin{scope}[scale = 0.5]

\begin{scope}[yshift = 4cm, rotate = 16]
\draw[fill = gray!30] (-4,-0.5) rectangle (4,0.5);
\draw[thick] (-4,0)--(4,0);
\end{scope}

\begin{scope}[yshift = 2cm, rotate = 8]
\draw[fill = gray!30] (-4,-0.5) rectangle (4,0.5);
\draw[thick] (-4,0)--(4,0);
\end{scope}

\begin{scope}
\draw[fill = gray!30] (-4,-0.5) rectangle (4,0.5);
\draw[thick] (-4,0)--(4,0);
\end{scope}

\draw[very thick, dashed] (0,-0.5) -- (0,4.5);
\foreach \i in {-4, -3.75,...,4}
	\draw  (\i,0.5) -- ($(0,2)+ (8:\i cm)+(0.07,-0.495) $); 
	
\foreach \i in {-4, -3.75,...,4}
	\draw ($(0,2)+ (8:\i cm)-(0.07,-0.495) $)  -- ($(0,4)+ (16:\i cm)+(0.1378,-0.4806) $);

\coordinate (atwo) at (4.25,0);
\coordinate (alabel) at (4.3,0);
\coordinate (c) at (0,3);
\end{scope}

\draw[<->] ($(atwo) + (0,-0.25)$) -- ($(atwo) + (0,0.25)$) ;
\draw ($(atwo) +(0.2,0)$) node{$\eps$};
\draw[->, thick] ($ (c) +(3,-0.3) $) -- +(2,0);
\draw ($ (c) +(4,0) $) node{$\eps \rightarrow 0$};

\begin{scope}[scale = 0.5, xshift = 16cm]

\draw[fill = gray!10,thick]  plot [scale=1,domain=0:16,smooth,variable=\r]  ({-4*cos(\r)},{-4*sin(\r)+0.25*\r}) -- plot [scale=1,domain=16:0,smooth,variable=\r]  ({4*cos(\r)},{4*sin(\r)+0.25*\r}) -- cycle;

\draw[thick, dashed] (0,0) -- (0,4);  

\draw(-19.3,4.5) node{$u_\eps$};
\draw(-3.5,4) node{$u$};
\end{scope}
\end{tikzpicture}
\end{center}
\begin{caption}
{Illustration of the deformation of Example~\ref{ex4} for $n=2$, where the increasing rotation of the stiffer layers is described by $R\in C^2([0,2];SO(2))$ with $R(t)e_1=\cos(t) e_1 + \sin(t)e_2$ for $t\in [0,2]$.}\label{Figure: rotation}\end{caption}
\end{figure}
\end{example}

We conclude this section with Table~\ref{Fig1}, which illustrates at one glance our findings in different scaling regimes for two space dimensions. Notice that any $(2\times 2)$-matrix can be expressed as $R(\beta\Ibb+a\otimes e_2)$ with $R\in SO(2)$, $\beta\in \R$ and $a\in \R^2$. 
\begin{figure}[ht]
\begin{tabular}{|c||l|l|}\hline
   & (a) $\nabla u$ & (b) $\nabla u$ with $\det \nabla u=1$ \\ \hline\hline
$\alpha=0$ & 
Characterization: & Characterization: \\
& no further restriction on $R, \beta, a$& $\beta^2+ \beta a_2=1$\\ \hline
$\alpha\in (0,p)$ & Explicit construction:  &\\
& $\beta\neq 1$  & \\ 
& see Example~\ref{ex3} &  \\ \hline
$\alpha=p$ & Explicit construction: & Explicit construction: \\
& $\nabla' R=\partial_1 R\neq 0$ & $R\neq {\rm const.}$ \\ 
& see Example~\ref{ex1} & see Example~\ref{ex2}  \\ \hline
$\alpha \in (p, \infty)$ & Characterization: & Characterization: \\ 
& $R\in W^{1,p}(\Omega;SO(2))$ with $\partial_1 R=0$, & $R={\rm const.}$, $\beta=1$   \\
& $\beta=1$, $\partial_1 a = R^T\curl R$ 

& $ a\,||\,e_1, \partial_1 a= 0$\\ 
& see Theorem~\ref{prop:rigidity} & see Corollary~\ref{cor:asymptotic_rigidity}, Remark~\ref{rem:2d}  \\ \cline{2-3}
& Explicit construction: & \\  
& $R \neq {\rm const.}$ & \\ 
& see Example~\ref{ex4}& \\ \hline
\end{tabular}
\caption{Overview of the results on the asymptotic behavior of weakly converging sequences $(u_\eps)_\eps\subset W^{1,p}(\Omega;\R^2)$ satisfying~\eqref{approx_diffinclusion_intro} in the different scaling regimes for $n=2$, (a) without and (b) with local volume constraint on the limit map $u$. It is used here that for any $u\in W^{1,p}(\Omega;\R^2)$ there are $R\in L^\infty(\Omega;SO(2))$, $\beta\in L^p(\Omega)$ and $a\in L^p(\Omega;\R^2)$ with $\nabla u= R(\beta\Ibb+a\otimes e_2)$.
}\label{Fig1} 
\end{figure}


\section{Proof of necessity in Theorem~\ref{prop:rigidity}}\label{sec:proof}

We will show in this section that weak limits of bounded energy sequences in the context of our model for layered materials with stiff and soft components have a strongly one-dimensional character. To make this more precise, we first introduce the following terminology.   
A measurable function $f:\Omega \to \R^m$, where $\Omega\subset \R^n$ is an open set, is said to be locally one-dimensional in $e_n$-direction if for every $x\in \Omega$ there is an open cuboid $Q_x\subset \Omega$ with $x\in Q_x$ such that for all $y, z\in Q_x$, 
\begin{align}\label{1d}
f(y) = f(z)\quad \text{ if $y_n=z_n$.}
\end{align}  
We call $f$ (globally) one-dimensional in $e_n$-direction if~\eqref{1d} holds for all $y, z\in \Omega$. 
For $f\in W^{1,p}_{\rm loc}(\Omega;\R^m)$ with $p\geq 1$ local one-dimensionality in $e_n$-direction of 
$f$, which means that there exists a representative of $f$ with the property, is equivalent to the condition $\nabla' f=0$, as can be seen from standard mollification argument. Hence, if $\nabla' f=0$, the function $f$ can be identified locally (i.e.~for any $x\in \Omega$ on an open cuboid $Q_x\subset \Omega$ containing $x$) with a one-dimensional $W^{1,p}$-function. Since the latter is absolutely continuous, it follows that $f$ is continuous. 

The next result and its implications discussed subsequent to its proof generalize the necessity statement ofTheorem~\ref{prop:rigidity} relaxing the assumptions on the domain.

\begin{theorem}\label{theo:rigidity_general} 
Let $\Omega\subset \R^n$ with $n\geq 2$ be a bounded open set, $1<p<\infty$  
and $\alpha>p$.
Furthermore, let $(u_\eps)_\eps \subset W^{1,p}(\Omega;\R^n)$ be such that for all $\eps>0$,
\begin{align}\label{approx_diffinclusion}
\int_{\eps\Ystiff\cap \Omega} \dist^p(\nabla u_\eps, SO(n))\dd{x} \leq C\eps^\alpha
\end{align}
with a constant $C>0$, and $u_\eps\weakly u$ in $W^{1,p}(\Omega;\R^n)$ for some $u\in W^{1,p}(\Omega;\R^n)$.

Then there exist $R\in W^{1, p}_{\rm loc}(\Omega; SO(n))$ with $\nabla' R=0$ and $b\in W^{1, p}_{\rm loc}(\Omega; \R^n)$ with $\nabla' b = 0$ such that 
\begin{align}\label{representation_u}
u(x) = R(x)x+b(x) \quad \text{for $x \in \Omega$.}
\end{align}
\end{theorem}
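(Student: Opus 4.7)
The strategy is to mimic the three-step approach of~\cite[Proposition~2.1]{ChK17}, adapted to the approximate differential inclusion via the Friesecke--James--M\"uller quantitative rigidity estimate in place of Reshetnyak's theorem. Since $\Omega$ is only assumed open, I would work locally: fix an axis-aligned cuboid $Q=Q'\times(c,d)\Subset\Omega$, prove the representation on $Q$, and then cover $\Omega$ by such cuboids. Denote by $L_{\eps,k}=Q'\times\bigl(\eps(k-1+\lambda),\eps k\bigr)$ the slice of the $k$-th stiff layer that lies in~$Q$.

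\textbf{Step 1 (layerwise rigidity).} On each $L_{\eps,k}$ I apply the FJM estimate~\cite{FJM02} to obtain $R_{\eps,k}\in SO(n)$ and $b_{\eps,k}\in\R^n$ with
\[
\int_{L_{\eps,k}}\absb{\nabla u_\eps-R_{\eps,k}}^p\dd x\le C\int_{L_{\eps,k}}\dist^p(\nabla u_\eps,SO(n))\dd x
\]
and a corresponding Poincar\'e-type bound on $u_\eps-R_{\eps,k}x-b_{\eps,k}$. Because $L_{\eps,k}$ is a thin slab of aspect ratio $\sim\eps^{-1}$, uniformity of the FJM constant in $\eps$ is not automatic; I would secure it either by invoking a thin-domain version of FJM, or, more elementarily, by covering $L_{\eps,k}$ with $\eps$-cells of fixed shape, applying FJM cellwise (with scale-invariant constants), and then gluing the per-cell rotations via overlap comparisons, which close because the mismatch between adjacent cells is of order $\eps^{\alpha/p}$. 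Summing over all layers intersecting $Q$ together with the energy bound~\eqref{approx_diffinclusion} gives
\[
\int_Q\absb{\nabla u_\eps-R_\eps}^p\dd x\le C\eps^\alpha,\qquad \int_Q\absb{u_\eps-R_\eps x-b_\eps}^p\dd x\le C\eps^{p+\alpha},
\]
where $R_\eps,b_\eps$ are the piecewise constant fields on $Q$ taking the values $R_{\eps,k},b_{\eps,k}$ on layer~$k$.

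\textbf{Step 2 (compactness).} The fields $R_\eps,b_\eps$ depend only on $x_n$ and are bounded in $L^p(Q)$, the bound on $b_\eps$ following from Step~1 and $\norm{u_\eps}_{L^p}\le C$. I would apply the Fr\'echet--Kolmogorov criterion in the $x_n$-variable; shifts within a single stiff layer cost nothing, so the contribution reduces to controlling the jumps $\abs{R_{\eps,k+1}-R_{\eps,k}}$ and $\abs{b_{\eps,k+1}-b_{\eps,k}}$ across consecutive stiff layers. These jumps are estimated by applying FJM on the bridged pair $L_{\eps,k}\cup L_{\eps,k+1}$ (viewed inside a slightly enlarged cuboid spanning the intervening soft layer) and combining the resulting rigidity error with the uniform $W^{1,p}$-bound on $u_\eps$ over the soft layer. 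This is precisely the main obstacle: across each soft layer of thickness $\lambda\eps$, where $\nabla u_\eps$ is only $L^p$-controlled, one must trade the FJM error $\eps^\alpha$ against the $\eps^{-n}$ volume scaling of a single cell, and this trade-off closes exactly when $\alpha>p$. Summing the jumps yields a uniform-in-$\eps$ modulus of continuity in $x_n$, so that along a subsequence $R_\eps\to R$ and $b_\eps\to b$ strongly in $L^p(Q)$, with $R,b$ depending only on $x_n$.

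\textbf{Step 3 (identification and regularity).} The second estimate of Step~1, together with $R_\eps x+b_\eps\to R(x_n)x+b(x_n)$ strongly in $L^p(Q)$ and $u_\eps\weakly u$ in $L^p$, identifies
\[
u(x)=R(x_n)x+b(x_n)\quad\text{a.e.\ in $Q$.}
\]
Differentiating this identity in the distributional sense gives $\nabla u=R(x_n)+\bigl(\partial_nR(x_n)x+\partial_nb(x_n)\bigr)\otimes e_n$, and the hypothesis $\nabla u\in L^p(Q;\R^{n\times n})$ then forces $\partial_nR,\partial_nb\in L^p$; since $R_\eps,b_\eps$ are constant in $x'$, so are $R,b$, which gives $\nabla'R=0$ and $\nabla'b=0$. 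Because the representation determines $R(x_n)$ and $b(x_n)$ uniquely from $u$ (up to a null set), the local statements are consistent on overlapping cuboids, and covering $\Omega$ yields the claimed $W^{1,p}_{\rm loc}$ regularity globally. The sharpness of the scaling $\alpha>p$ exploited in Step~2 is confirmed by the explicit bending examples of Section~\ref{sec:overview&examples}.
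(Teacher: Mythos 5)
Your three-step architecture (layerwise FJM approximation, Fr\'echet--Kolmogorov compactness in the $x_n$-variable, limit identification) is the same as the paper's, and localizing to cuboids $Q\Subset\Omega$ is the right move. Two quantitative points, however, break down. First, the Step~1 bounds you write are much too strong. The FJM constant on a slab of thickness $\sim\eps$ and fixed cross-section degenerates like $\eps^{-1}$ (see~\eqref{scalingPeps}), so layerwise FJM combined with~\eqref{approx_diffinclusion} yields $\int_{\eps\Ystiff\cap Q}|\nabla u_\eps-R_\eps|^p\dd x\le C\eps^{\alpha-p}$, not $C\eps^\alpha$; and extending this to all of $Q$ fails outright, since $\nabla u_\eps$ is unconstrained on $\eps\Ysoft\cap Q$. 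The paper's corresponding estimate is $\|u_\eps-w_\eps\|_{L^p(Q')}\le C(\eps^{\alpha/p-1}+\eps)$, where the extra $+\eps$ comes from transporting the stiff-layer Poincar\'e bound into the soft layers via difference quotients of $u_\eps$; your claimed $C\eps^{p+\alpha}$ for the $p$-th power is nowhere near what is available, and for $\alpha>p$ only the weaker rate $\eps^{\alpha-p}+\eps^p\to0$ is true (and sufficient).

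Second, and more seriously, the mechanism you propose for controlling the rotation jumps $|R_{\eps,k+1}-R_{\eps,k}|$ --- applying FJM on the ``bridged pair'' $L_{\eps,k}\cup(\text{soft layer})\cup L_{\eps,k+1}$ --- cannot work. FJM bounds $\|\nabla u_\eps-R\|_{L^p}$ by $\|\dist(\nabla u_\eps,SO(n))\|_{L^p}$ over the \emph{entire} domain it is applied to, and on the intervening soft layer this quantity is controlled only by the uniform $W^{1,p}$-norm of $u_\eps$, i.e.~is $O(1)$ per layer, not $O(\eps^{\alpha/p})$; the resulting estimate is vacuous. The paper avoids this by never invoking rigidity on a soft layer: it sandwiches $\|w_\eps(\cdot+\xi e_n)-w_\eps\|_{L^p(Q)}$ from above by $2\|w_\eps-u_\eps\|_{L^p}+\xi\|\partial_n u_\eps\|_{L^p}$ (the soft layers enter only through $u_\eps$) and from below by the rotation jumps via the reverse-Poincar\'e type estimate of Lemma~\ref{lem:estimateR1R2} for affine maps $x\mapsto(R_1-R_2)x+d$ on thin slabs, which crucially is uniform in the slab thickness because $R_1,R_2\in SO(n)$. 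That lower-bound lemma is the missing ingredient in your Step~2; without it there is no source for the modulus of equicontinuity feeding Fr\'echet--Kolmogorov, and the threshold $\alpha>p$ is not actually exploited where you claim it is. Your Step~3 idea of reading off $W^{1,p}$-regularity of $R$ and $b$ directly from $u\in W^{1,p}$ and the identity $u=R(x_n)x+b(x_n)$ (rather than from the difference-quotient estimate on $\Sigma_0$ as in the paper) is a legitimate alternative, but it does not repair the earlier steps.
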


\begin{remark}\label{rem:necessity}
a) Notice that the functions $R$ and $b$ are both locally one-dimensional in $e_n$-direction and continuous. In particular, $u\in C^0(\Omega;\R^n)$.\\[-0.2cm] 

b) It is straightforward to generalize Theorem~\ref{theo:rigidity_general} to a $(p, q)$-version. Precisely, if $p$ in~\eqref{approx_diffinclusion} is replaced with any $1<q<\infty$, the same conclusion remains true under the assumption that $\alpha>q$, cf.~\cite[Section~3.3]{Chr18}. For a discussion of the case $p=1$, see Remark \ref{rem:rigidity} \\[-0.2cm]

c) One can show that the statement of Theorem~\ref{theo:rigidity_general} remains true if the relative thickness of the softer layers $\lambda\in (0,1)$ depends on $\epsilon$ (then denoted by $\lambda_\eps$) in such a way that $1-\lambda_\eps \gg \eps^{\frac{\alpha}{p}-1}$.  For more details, we refer to~\cite[Theorem~3.3.1]{Chr18}.
\end{remark}

Theorem~\ref{theo:rigidity_general} builds on two classical results, which we recall here for the readers' convenience. The first one is the quantified rigidity result for Sobolev functions established in~\cite[Theorem~3.1]{FJM02}, cf. also~\cite{CoS06, CDM14, Con03} for generalizations to other $W^{1, p}$-settings.

\begin{theorem}[Quantitative rigidity estimate]\label{prop:FJM}
Let $U \subset \R^n$ with $n\geq 2$ be a bounded Lipschitz domain and $1< p<\infty$. Then there exists a constant $C=C(U, p)>0$ with the property that for each $u \in W^{1,p}(U; \R^n)$ there is a rotation $R \in SO(n)$ such that 
\begin{align*}
\|\nabla u - R\|_{L^p(U;\R^{n\times n})} \leq C \|\dist(\nabla u, SO(n))\|_{L^p(U)}.
\end{align*}
\end{theorem}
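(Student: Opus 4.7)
The plan is to follow the Friesecke--James--M\"uller strategy: reduce to a local estimate on a unit cube by a covering argument, and on that cube obtain the bound by linearizing the distance function around a suitable reference rotation and invoking Korn's inequality.

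First, I would reduce to the case where $U$ is a cube. Since $U$ is a bounded Lipschitz domain, it can be covered by finitely many overlapping cubes $Q_1,\dots,Q_N$, on each of which (assuming the local estimate) one obtains a rotation $R_i\in\SO(n)$. On a nontrivial overlap $Q_i\cap Q_j$, the triangle inequality combined with the local bounds gives $|R_i-R_j|^p|Q_i\cap Q_j|\le C\norm{\dist(\nabla u,\SO(n))}_{L^p(Q_i\cup Q_j)}^p$, so by induction along a chain of overlapping cubes all $R_i$ can be replaced by a single rotation $R$ with the desired global estimate, incurring a constant that depends only on the covering, i.e.\ on $U$ and $p$.

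The heart of the argument is the local estimate on a unit cube $Q$, where by scaling the constant depends only on the dimension. Assume first that $\eta:=\norm{\dist(\nabla u,\SO(n))}_{L^p(Q)}$ is small; the opposite regime is trivial, as one can take $R=\Ibb$ and absorb everything into the constant. A Lipschitz truncation produces $\tilde u$ with $\norm{\nabla\tilde u}_{L^\infty}\le M$ and $\norm{\nabla u-\nabla\tilde u}_{L^p(Q)}\le C\eta$, while the measure of the set $\{\tilde u\neq u\}$ is controlled by $\eta^p$. A compactness argument then selects a fixed $R_0\in\SO(n)$ with $\norm{\nabla\tilde u-R_0}_{L^p(Q)}$ small. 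Writing $\nabla\tilde u=R_0(\Ibb+A)$ and Taylor-expanding $\dist(R_0(\Ibb+A),\SO(n))^2=|\mathrm{sym}\,A|^2+O(|A|^3)$ for small $A$, the hypothesis translates into an $L^p$-bound on $\mathrm{sym}\,A$. Korn's inequality on $Q$, valid for $1<p<\infty$, yields a constant skew matrix $W$ with $\norm{A-W}_{L^p(Q)}\le C\norm{\mathrm{sym}\,A}_{L^p(Q)}$. Projecting $R_0(\Ibb+W)$ onto $\SO(n)$ then produces the rotation $R$ satisfying the desired estimate for $\tilde u$, and the truncation bound transfers it to $u$.

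The main obstacle is the uniform linearization step: the Taylor expansion of the squared distance function to $\SO(n)$ is only valid pointwise when $\nabla u$ is close to $\SO(n)$, so without truncation there is no control of the cubic error term. This is why a Lipschitz truncation (or alternatively a harmonic replacement, as used in the original FJM proof for $p=2$) is essential to pass from $L^p$-smallness of $\dist(\nabla u,\SO(n))$ to pointwise smallness of $A$ on a large-measure set. A secondary technical point is to verify that all ingredients (truncation, Korn's inequality for Lipschitz domains, and the covering gluing argument) work with constants depending only on $U$ and $p$, so that the final estimate respects the stated uniformity across all $u\in W^{1,p}(U;\R^n)$.
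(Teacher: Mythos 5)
First, note that the paper does not prove this statement at all: Theorem~\ref{prop:FJM} is recalled verbatim from Friesecke--James--M\"uller \cite[Theorem~3.1]{FJM02} (with \cite{CoS06, CDM14, Con03} cited for the general $W^{1,p}$ versions), so there is no in-paper proof to compare against. Your sketch reproduces the correct overall architecture of the known proof -- Lipschitz truncation, a local estimate on a cube, and a chaining/covering argument to glue the local rotations over a Lipschitz domain -- and the truncation and gluing steps are fine as described.

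The genuine gap is in the core local step. You propose: compactness gives $R_0\in SO(n)$ with $\|\nabla\tilde u-R_0\|_{L^p(Q)}$ \emph{small}, then write $\nabla\tilde u=R_0(\Ibb+A)$, Taylor-expand $\dist^2(\Ibb+A,SO(n))=|\mathrm{sym}\,A|^2+\mathrm{O}(|A|^3)$, and apply Korn. But the compactness argument is purely qualitative: it yields $\|A\|_{L^p}\leq\delta$ where $\delta\to 0$ as $\eta\to 0$ \emph{with no rate}, while the truncation only gives $\|A\|_{L^\infty}\leq 2M$ with $M$ a fixed constant, not pointwise smallness. Consequently the cubic remainder obeys only $\int|A|^3\leq \|A\|_{L^\infty}\int|A|^2\leq 2M\int|A|^2$, and after Korn one arrives at an estimate of the form $\|A-W\|_{L^p}^p\leq C\eta^p+C(M,\delta)$ where the second term is $\mathrm{o}(1)$ but not $\mathrm{O}(\eta^p)$; the linear bound in $\eta$ does not close. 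This is precisely the obstruction that makes the theorem nontrivial, and it is why the actual proof in \cite{FJM02} does not run through a one-shot linearization plus Korn: it exploits the Piola identity $\mathrm{div}(\cof\nabla v)=0$ together with the pointwise bound $|\cof F-F|\leq C\dist(F,SO(n))$ for $|F|\leq M$ to write $\Delta v=\mathrm{div}\,f$ with $\|f\|_{L^2}\leq CE^{1/2}$, and then uses harmonic replacement with interior regularity and a Campanato-type iteration over dyadic subcubes to upgrade the qualitative closeness to the quantitative estimate. To repair your argument you would either have to follow that route, or set up a genuine self-improvement scheme in which the smallness parameter $\delta$ is fed back quantitatively into the linearization error -- neither of which is present in the sketch. (For general $1<p<\infty$ one then either deduces the $L^p$ bound from the $L^2$ case via the truncation, since the truncated gradient is bounded, or cites \cite{CoS06, Con03}.)
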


A straightforward scaling argument shows that the constant $C$ remains unaffected by uniform scaling and translation of $U$. Applying the above theorem to increasingly thinner domains, however, leads to degenerating constants. If $U= P_\eps=O \times \eps I\subset \R^n$ with $\eps>0$, $O\subset \R^{n-1}$ a cube and $I\subset \R$ a bounded open interval
one obtains that 
\begin{align}\label{scalingPeps}
C(P_\eps, p)=\eps^{-1}C(P_1, p),
\end{align} 
see~\cite[Section~4]{FJM06} and~\cite[Section~3.5.1]{Chr18}. 

The second tool is the Fr\'echet-Kolmogorov theorem, a compactness result for $L^p$-functions, see e.g.~\cite[Sections~2.15, U.2]{Alt16} and \cite{HaH10}. 
Here, we will apply it only in the basic version formulated in the next lemma, that is, for families of functions of one real variable with uniformly bounded essential supremum.   

\begin{lemma}\label{theo:FrechetKolmogorov}
Let $J, J'\subset \R$ be open, bounded intervals with $J\subset\subset J'$ and $1\leq p<\infty$. If the sequence $(f_\eps)_\eps$ is uniformly bounded in $L^\infty(J';\R^m)$ satisfying 

\begin{align*}
\lim_{|\xi|\to 0} \sup_{\eps>0} \int_{J} \abs{f_\eps(t+\xi) -f_\eps(t)}^p \dd{t} = 0,
\end{align*}
then $(f_\eps)_\eps$ is relatively compact in $L^p(J;\R^m)$. 
\end{lemma}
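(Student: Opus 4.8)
The plan is to show that $(f_\eps)_\eps$ is totally bounded in $L^p(J;\R^m)$; since this space is complete, total boundedness is equivalent to relative compactness. The classical device for this is mollification: one approximates each $f_\eps$ by a smoother function, uniformly in $\eps$, in such a way that the mollified family is — for a fixed mollification parameter — relatively compact in $C(\overline{J};\R^m)$, hence in $L^p(J;\R^m)$, and one then assembles finite $\eta$-nets.

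First I would fix a standard mollifier $\rho\in C^\infty_c(-1,1)$ with $\int\rho=1$, set $\rho_\delta(t)=\delta^{-1}\rho(t/\delta)$, and define $f_\eps^\delta:=\rho_\delta * f_\eps$ on $\overline{J}$; this is well defined for every $\delta<\dist(J,\partial J')$, since then $t-s\in J'$ whenever $t\in J$ and $|s|\le\delta$. Writing
\[
f_\eps^\delta(t)-f_\eps(t)=\int_{|s|\le\delta}\rho_\delta(s)\bigl(f_\eps(t-s)-f_\eps(t)\bigr)\dd s
\]
and applying Jensen's inequality with respect to the probability measure $\rho_\delta(s)\dd s$ followed by Fubini's theorem, one obtains
\[
\int_J\abs{f_\eps^\delta-f_\eps}^p\dd t\ \le\ \sup_{|\xi|\le\delta}\int_J\abs{f_\eps(t-\xi)-f_\eps(t)}^p\dd t .
\]
Taking the supremum over $\eps$ and invoking the translation-continuity hypothesis then gives $\sup_\eps\norm{f_\eps^\delta-f_\eps}_{L^p(J;\R^m)}\to 0$ as $\delta\to 0$.

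Next, for each fixed $\delta$ I would verify that $(f_\eps^\delta)_\eps$ is relatively compact in $C(\overline{J};\R^m)$ via Arzel\`a--Ascoli. Equiboundedness is immediate from $\norm{f_\eps^\delta}_{L^\infty(J)}\le\norm{\rho_\delta}_{L^1}\norm{f_\eps}_{L^\infty(J')}\le M$, where $M:=\sup_\eps\norm{f_\eps}_{L^\infty(J';\R^m)}<\infty$. Equicontinuity follows by writing $f_\eps^\delta(t)=\int_{J'}f_\eps(r)\,\rho_\delta(t-r)\dd r$ and estimating $\abs{f_\eps^\delta(t_1)-f_\eps^\delta(t_2)}\le\norm{f_\eps}_{L^1(J')}\norm{\rho_\delta'}_{L^\infty}\abs{t_1-t_2}\le \abs{J'}\,M\,\norm{\rho_\delta'}_{L^\infty}\abs{t_1-t_2}$, i.e. the family is equi-Lipschitz with a constant depending only on $\delta$. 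Since $J$ is bounded, $C(\overline{J};\R^m)$ embeds continuously into $L^p(J;\R^m)$, so $(f_\eps^\delta)_\eps$ is relatively compact, in particular totally bounded, in $L^p(J;\R^m)$.

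Finally I would combine the two steps: given $\eta>0$, first choose $\delta$ with $\sup_\eps\norm{f_\eps^\delta-f_\eps}_{L^p(J;\R^m)}<\eta/2$, then take a finite $(\eta/2)$-net $\{g_1,\dots,g_N\}$ for $\{f_\eps^\delta:\eps\}$ in $L^p(J;\R^m)$; by the triangle inequality, $\{g_1,\dots,g_N\}$ is a finite $\eta$-net for $(f_\eps)_\eps$. Hence $(f_\eps)_\eps$ is totally bounded, and therefore relatively compact, in $L^p(J;\R^m)$. I do not expect any genuine obstacle: the argument is entirely classical, and the only points requiring care are the bookkeeping with the two nested intervals (so that translates and mollifications stay inside $J'$) and the observation that the equicontinuity constant in the Arzel\`a--Ascoli step is uniform in $\eps$, which is exactly what the uniform $L^\infty$-bound — via the resulting uniform $L^1(J')$-bound — provides.
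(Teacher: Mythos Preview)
Your proof is correct; it is the standard mollification-plus-Arzel\`a--Ascoli argument for the Fr\'echet--Kolmogorov theorem, specialized to the present one-dimensional setting with a uniform $L^\infty$ bound. The paper itself does not give a proof of this lemma but simply quotes it as a known result (with references to Alt and to Hanche-Olsen--Holden), so there is no paper proof to compare against; what you have written is essentially the classical proof one finds in those references.
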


Regarding structure, the following proof proceeds along the lines of~\cite[Proposition~2.1]{ChK17}, which, as mentioned in the introduction, constitutes a special case of Theorem~\ref{prop:rigidity}. Yet, the individual steps are more involved and require new, refined arguments to relax the assumption of the stiff layers being fully rigid and to overcome the restriction to two space dimensions.

\begin{proof}[Proof of Theorem~\ref{theo:rigidity_general}]
Let $Q=O\times J\subset \Omega$ be a cuboid with $O\subset \R^{n-1}$ an open cube of side length $l>0$ and $J\subset \R$ an open interval. 
Suppose that there exist open intervals $J', J''$ with $J\subset\subset J'\subset\subset J''$ and $Q'' := O\times J''\subset \Omega$. Moreover, let $Q':= O\times J'$.
We define horizontal strips by setting
\begin{align*}
P_\eps^i= (\R^{n-1}\times \eps[i, i+1)) \cap Q'' \qquad \text{for~$ i \in \Z$ and $\eps>0$}.
\end{align*}
The index set $I_\eps$ contains all $i\in \Z$ with $\abs{P_\eps^i}=\eps |O|$, and we assume $\eps>0$ to be small enough, so that  $Q\subset Q' \subset \bigcup_{i \in I_\eps} P^i_\eps \subset Q''$.  

For the proof, 
it suffices to show the existence of $R\in W^{1,p}(Q;SO(n))$ and $b\in W^{1,p}(Q;\R^n)$ with $\nabla' R=0$ and $\nabla'b=0$ in $Q$, respectively, such that the characterization~\eqref{representation_u} holds  
for~$x\in Q$.
Then we can approximate $\Omega$ from inside with overlapping cuboids to obtain the same statements for any compact $K\subset \Omega$. Indeed, the resulting characterizations in terms of $R$ and $b$ coincide on the overlapping parts. Finally, exhausting $\Omega$ with compact nested subsets proves Theorem~\ref{theo:rigidity_general} in the stated generality. 

In the following, the constants $C>0$ depend at most on $n, p, \lambda, \Omega$ and $c$ from~\eqref{approx_diffinclusion}, in particular, they are independent of $\eps$, $l$ and $J$. 

\textit{Step~1: Layerwise approximation by rigid body motions.} In this first step, we will construct a sequence of piecewise affine functions $(w_\eps)_\eps$ such that the restriction of each $w_\eps$ to a strip $P_\eps^i$ is a rigid body motion and 
\begin{align}\label{convergence}
\lim_{\eps\to 0}\norm{u_\eps-w_\eps}_{L^p(Q';\R^n)}=0.
\end{align}
Applying Theorem~\ref{prop:FJM} (under consideration of  the scaling behavior of the constant according to~\eqref{scalingPeps}) to the individual stiff layers yields the existence of $C>0$ and of rotations $R^i_\eps \in SO(n)$ for every $i\in I_\eps$ such that
\begin{align}\label{FJMestimate}
\|\nabla u_\eps-R_\eps^i\|_{L^p(\eps \Ystiff\cap P_\eps^i;\R^{n\times n})} \leq C\eps^{-1}\|\dist(\nabla u_\eps, SO(n))\|_{L^p(\eps \Ystiff\cap P_\eps^i)}.
\end{align}

Let $w_\eps \in L^\infty(Q';\R^n)$ be defined by $w_\eps=\sigma_\eps + b_\eps$, where 
\begin{align}\label{sigmaeps}
\sigma_\eps(x) =\sum_{i\in I_\eps}(R_\eps^ix)\mathbbm{1}_{P_\eps^i \cap Q'}(x), \quad \text{ $x\in Q'$},
\end{align}
and
\begin{align*}
b_\eps=\sum_{i\in I_\eps} b_\eps^i\mathbbm{1}_{P_\eps^i\cap Q'}\quad\text{with $b_\eps^i = 
  \dashint_{\eps \Ystiff\cap P_\eps^i} u_\eps - R_\eps^ix \dd x$.} 
\end{align*}

The specific choice of the values $b_\eps^i$ implies that $\int_{\eps \Ystiff\cap P_\eps^i} u_\eps-w_\eps\dd{x}=0$, and therefore allows us to apply Poincar\'{e}'s inequality to $u_\eps-w_\eps$ on each stiff layer. Hence, one obtains for every $i\in I_\eps$ that 
\begin{align}\label{est44}
\norm{u_\eps - w_\eps}_{L^p(\eps \Ystiff\cap P_\eps^i;\R^n)}  &\leq  C  \norm{\nabla u_\eps - R_\eps^i}_{L^p(\eps \Ystiff\cap P_\eps^i;\R^{n\times n})}, 
\end{align}
see e.g.~\cite[Section 7.8]{GiT01} for details on the domain dependence of the Poincar\'{e} constant.

Next we derive a corresponding bound on the softer layers. By a shifting argument, this problem can be reduced to estimate~\eqref{est44} for the stiff layers. The error is given in terms of difference quotients in $e_n$-direction of $u_\eps-w_\eps$, which we control uniformly.
More precisely, for fixed $i\in I_\eps$ we cover $\eps \Ysoft\cap P_\eps^i$ with finitely many shifted copies of $\eps\Ystiff\cap P_\eps^i$, that is, we consider $O_{\eps, k}^i =\eps \Ystiff\cap P_\eps^i -\delta_k e_n$ with $0<\delta_k \leq \lambda \eps$ and $k=1, \ldots, N:=\lceil\tfrac{\lambda}{1-\lambda}\rceil$ such that $\eps\Ystiff \cap P_\eps^i \subset \bigcup_{k=1}^N O_{\eps, k}^i$. Then, 
\begin{align*}
\int_{O_{\eps, k}^i} \abs{u_\eps - w_\eps}^p \dd{x} & \leq C \int_{\eps\Ystiff\cap P_\eps^i}\abs{u_\eps-w_\eps}^p \dd{x} \\ & \qquad \qquad \qquad + C \int_{\eps \Ystiff\cap P_\eps^i} |(u_\eps-w_\eps)(x) - (u_\eps-w_\eps)(x-\delta_k e_n)|^p \dd{x}\\
& \leq C \bigl( \norm{u_\eps-w_\eps}^p_{L^p(\eps\Ystiff\cap P_\eps^i;\R^n)}
+ \delta_k^p \norm{\partial_n u_\eps -  \partial_n w_\eps}_{L^p(P_\eps^i;\R^n)}^p\bigr). 
\end{align*}

Here, we have used a one-dimensional difference quotient estimate with respect to the $x_n$-variable. 
Summing over the $N$ covering cuboids then leads to
\begin{align*}
\int_{\eps \Ysoft\cap P_\eps^i}\abs{u_\eps-w_\eps}^p \dd{x} & \leq C \bigl(\| u_\eps-w_\eps \|_{L^p(\eps \Ystiff\cap P_\eps^i;\R^n)}^p +  \eps^p \norm{\nabla u_\eps}^p_{L^p(P_\eps^i;\R^{n\times n})} + \eps^p |P_\eps^i| \bigl).
\end{align*}

Finally, we take the sum over $i\in I_\eps$ to deduce from~\eqref{est44} and~\eqref{FJMestimate} that
\begin{align*}
\int_{Q'} \abs{u_\eps-w_\eps}^p\dd{x} 
\leq C \bigr( \eps^{-p}\|\dist(\nabla u_\eps, SO(n) \|^p_{L^p(\eps\Ystiff\cap \Omega)} + \eps^p \|u_\eps\|_{W^{1,p}(\Omega;\R^n)}^p  + \eps^p |\Omega|\bigl).
\end{align*}
Therefore, by~\eqref{approx_diffinclusion} and the uniform boundedness of $(u_\eps)_\eps$ in $W^{1,p}(\Omega;\R^n)$,
\begin{align}\label{est2}
\norm{u_\eps - w_\eps}_{L^p(Q';\R^n)} \leq C(\eps^{\frac{\alpha}{p} -1} + \eps).
\end{align}
Since $\alpha>0$, this implies~\eqref{convergence}. 

\textit{Step~2: Compactness of the approximating rigid body motions.} 
Consider for $\eps>0$ the piecewise constant one-dimensional auxiliary function $\Sigma_\eps: J'\to SO(n)$ defined by
\begin{align}\label{Sigmaeps}
\Sigma_\eps(t)=\sum_{i\in I_\eps} R_\eps^i \mathbbm{1}_{\eps[i, i+1)}(t), 
\qquad t\in J',
\end{align}
with $R_\eps^i$ as in Step~1. In relation to~\eqref{sigmaeps}, it holds that $\sigma_\eps(x) = \Sigma_\eps(x_n)x$ for $x\in Q'$.

\textit{Step~2a: Estimate for rotations on different strips.}
Next we will show that for every $\xi\in \R$ such that $J \cup (J+\xi)\subset J'$,
\begin{align}\label{keyestimate}
\|\Sigma_\eps(\cdot +\xi) - \Sigma_\eps\|_{L^p(J; \R^{n\times n})} \leq  C l^{-p-n+1}\big( \norm{u_\eps-w_\eps}_{L^p(Q';\R^n)} + \xi\|u_\eps\|_{W^{1,p}(\Omega;\R^n)} \big).
\end{align}
To this end, we estimate the expression $\|w_\eps (\cdot + \xi e_2) - w_\eps\|_{L^p(Q;\R^n)}$ from above and below. 

The upper bound follows from
\begin{align}\label{est46}
\|w_\eps (\cdot + \xi e_2) - w_\eps\|_{L^p(Q;\R^n)} &
\leq \|w_\eps -u_\eps\|_{L^p(Q';\R^n)}   + \|w_\eps(\cdot +\xi e_n) -u_\eps(\cdot+\xi e_n)\|_{L^p(Q;\R^n)} \nonumber \\ & \qquad\qquad\qquad  + \|u_\eps(\cdot+\xi e_n) - u_\eps\|_{L^p(Q;\R^n)} \nonumber \\
&\leq 2\|w_\eps -u_\eps\|_{L^p(Q';\R^n)} + \xi  \|\partial_n u_\eps\|_{L^p(Q';\R^n)}.
\end{align}

For the lower bound, we set $d_{\eps, \xi}^i = b_\eps^{i+ \lfloor\frac{\xi}{\eps}\rfloor} -b_\eps^i + \xi R_\eps^{i+\lfloor \frac{\xi}{\eps} \rfloor}e_n$ and use Lemma~\ref{lem:estimateR1R2} to derive that 
\begin{align}\label{est47}
&\|w_\eps (\cdot + \xi e_n) - w_\eps\|^p_{L^p(Q;\R^n)} 
=  \sum_{i\in I_\eps} \int_{P_\eps^{i}\cap Q}
 \bigl|(R^{i + \lfloor\frac{\xi}{\eps}\rfloor}_\eps - R^{i}_\eps) x +d^{i}_{\eps, \xi}\bigr|^p\dd x \nonumber \\
&\qquad\qquad  \geq  C l^p \sum_{i\in I_\eps} |R_\eps^{i+\lfloor\frac{\xi}{\eps}\rfloor}-R_\eps^{i}|^p  |P_\eps^{i}\cap B'|  \geq C l^p \norm{\Sigma_\eps(\cdot + \xi) -\Sigma_\eps}^p_{L^p(J;\R^{n\times n})}.
\end{align}

Combining~\eqref{est46} and \eqref{est47} gives~\eqref{keyestimate}.

\textit{Step~2b: Application of the Fr\'echet-Kolmogorov theorem.} 
To establish strong $L^p$-convergence of $(\Sigma_\eps)_\eps$ as $\eps\to0$, observe that in view of~\eqref{est2} and the uniform boundedness of $(u_\eps )_\eps$ in $W^{1,p}(\Omega;\R^{n})$, estimate~\eqref{approx_diffinclusion} turns into
\begin{align}\label{est_rigid}
\|\Sigma_\eps(\cdot +\xi) - \Sigma_\eps\|_{L^p(J;\R^{n\times n})} \leq Cl^{-p-n+1}(\xi +\eps^{\frac{\alpha}{p} -1}).
\end{align}
It is standard to verify (see~e.g.~\cite[Proof of Theorem 4.1]{FJM02} for an analogous argument) that then
\begin{align*}
\limsup_{|\xi|\rightarrow 0} \sup_{\eps>0}\|\Sigma_{\eps}(\cdot +\xi) - \Sigma_{\eps}\|_{L^p(J;\R^{n\times n})} = 0. 
\end{align*}
Hence, by Theorem~\ref{theo:FrechetKolmogorov}, there exist a subsequence (not relabeled) and a $\Sigma_0 \in L^p(J;\R^{n\times n})$ such that
\begin{align}\label{convergence_Sigmaeps}
\Sigma_{\eps} \to \Sigma_0\qquad \text{in $L^p(J;\R^{n\times n})$.}
\end{align}
Note that $\Sigma_0$ may still depend on the subsequence at this point. In Step~3, $\Sigma_0$ will be characterized in terms of the limit function $u$, which makes $\Sigma_0$ unique and the above argument independent of the choice of subsequences. 
Due to the strong $L^p$-convergence of $(\Sigma_{\eps})_\eps$, which preserves lengths and angles almost everywhere, we conclude that $\Sigma_0\in SO(n)$ a.e.~in $J'$.

\textit{Step~2c: Regularity of $\Sigma_0$.}
As a result of~\eqref{est_rigid}, we obtain an estimate on the difference quotients of $\Sigma_{\eps}$, precisely
\begin{align*}
\int_J \Big|\frac{\Sigma_{\eps}(t+ \xi) - \Sigma_{\eps}(t)}{\xi}\Big|^p \dd t \leq Cl^{-p-n+1}\big(1+ \xi^{-p} \eps^{\alpha-p}\big).
\end{align*}
Passing to the limit $j\to \infty$ results in
\begin{align}\label{est_diffquotient}
\int_J \Big|\frac{\Sigma_0(t+ \xi) - \Sigma_0(t)}{\xi}\Big|^p \dd t \leq Cl^{-p-n+1},
\end{align}
which shows that $\Sigma_0\in W^{1,p}(J;\R^{n\times n})$, 
see e.g.~\cite[Theorem~10.55]{Leo09}.

\textit{Step~3: Representation of the limit function $u$.}
Recall the definitions of $\sigma_\eps$ in~\eqref{sigmaeps} and $\Sigma_\eps$ in~\eqref{Sigmaeps}. With $\sigma_0(x) = \Sigma_0(x_n)x$ for $x\in Q$ one has that
\begin{align*}
\int_Q \abs{\sigma_{\eps}-\sigma_0}^p\dd{x} \leq \sum_{i\in I_{\eps}} \int_{P_{\eps}\cap Q} \abs{R_{\eps}^i-\Sigma_0(x_n)}^p\abs{x}^p\dd{x} \leq C \int_J |\Sigma_{\eps}-\Sigma_0|^p\dd{t},
\end{align*}
Then, by~\eqref{convergence_Sigmaeps}, 
\begin{align}\label{con33}
\sigma_{\eps} \to \sigma_0 \qquad\text{in $L^p(Q;\R^n)$,}
\end{align}
Since $b_\eps = w_\eps-\sigma_\eps = (w_\eps-u_\eps) + u_\eps -\sigma_\eps$ we find in view of~\eqref{convergence},~\eqref{con33} and the convergence $u_\eps\to u$ in $L^p(\Omega;\R^n)$ by the compact embedding of $W^{1,p}(Q;\R^n)$ into $L^p(Q;\R^n)$ that 
\begin{align*}
b_\eps\to u-\sigma_0 =:b\quad \text{in $L^p(Q;\R^n)$.} 
\end{align*}
Due to the regularity of $u$ and $\sigma_0$, it follows that $b\in W^{1,p}(Q;\R^n)$.  
Since $b_\eps$ is independent of the $x'$-variables, the same is true for $b$. 
Finally, defining 
\begin{align}\label{def_R}
R(x)=\Sigma_0(x_n)\quad \text{ for $x\in Q$,}
\end{align} 
proves the desired representation of $u$.
\end{proof}

\begin{remark}\label{rem:rigidity}
a) Setting $p=1$ in Remark~\ref{rem:necessity}\,b) in combination with Theorem~\ref{theo:rigidity_general} leads to the representation~\eqref{representation_u} with $R\in BV_{\rm loc}(\Omega;SO(n))$ and $b\in BV_{\rm loc}(\Omega;\R^n)$ satisfying $D'R=0$ and $D'b=0$, respectively. The reasoning is the same as for $p>1$, but instead of getting $\Sigma_0\in W^{1,1}(J;\R^{n\times n})$ from~\eqref{est_diffquotient}, we can only deduce that $\Sigma_0 \in BV(J; \R^{n\times n})$, see e.g.~\cite[Corollary 2.43]{Leo09}. \\[-0.2cm]

b) Notice that in view of~\eqref{def_R} and \eqref{est_diffquotient} it holds that
\begin{align}\label{est_diffquotient2}
\norm{R}^p_{W^{1,p}(Q;\R^{n\times n})} \leq C(1+ l^{-p}).
\end{align}
This estimate is not uniform for all cuboids $Q\subset \Omega$ as used in the proof of Theorem~\ref{theo:rigidity_general}. In fact, the bound becomes large for cuboids with small cross-section. One can therefore not expect in general that the weak derivatives of $R$ be $p$-integrable on all of $\Omega$.\\[-0.2cm] 

c) If $\Omega$ in Theorem~\ref{theo:rigidity_general} is of the form $\Omega=O\times I$ with $O\subset \R^{n-1}$ an open cube of side length $l>0$ and $I\subset \R$ an interval, then the proof shows that $R\in W^{1,p}(\Omega;SO(n))$, and hence also $b\in W^{1,p}(\Omega;\R^n)$, for any $p>1$. 

Indeed, let us choose intervals $J_k\subset\subset I$ for $k\in \N$ such that $J_k\subset J_{k+1}$ and $I=\bigcup_{k=1}^\infty J_k$ and set   $Q_k=O\times J_k$. Then by estimate~\eqref{est_diffquotient2}, 
\begin{align}\label{bound1}
\norm{R}_{W^{1,p}(Q_k;\R^{n\times n})}^p\leq C
\end{align}
with $C>0$ independent of $k$. Since the cuboids $Q_k$ exhaust $\Omega$, the uniform bound~\eqref{bound1} yields that $R\in W^{1,p}(\Omega;SO(n))$.
\end{remark}

The observation of Remark~\ref{rem:rigidity}\,c) can be extended to a larger class of Lipschitz domains. 
In fact, under suitable additional assumptions on $\Omega$, namely connectedness of cross-sections and a flatness property, which are introduced in Definitions~\ref{def:connected}~and~\ref{def:flat}, we can drop the restriction to local $W^{1,p}$-regularity of $R$ and $b$ in Theorem~\ref{theo:rigidity_general}, as Corollary~\ref{cor:rigidity} below shows.
\begin{definition}[Connectedness of cross-sections]\label{def:connected} 
An open set $\Omega\subset \R^n$ is called cross-section connected if for any $t\in \R$ the intersection $\Omega_t$ of $\Omega$ with the hyperplane $H_t=\{x\in \R^n: x_n =t\}$ is connected. 
\end{definition}

Clearly, every convex set is cross-section connected, but also cylinders and cones in $\R^n$ (oriented in $e_n$-direction) with non-convex cross section are. In Fig.~\ref{fig:domains}\,a), b) we give a two-dimensional example for illustration. 
An important property of domains $\Omega$ as in Definition~\ref{def:connected} is that any locally one-dimensional vector (and matrix) field in $e_n$-direction defined on $\Omega$ is already globally one-dimensional in $e_n$-direction, cf.~Lemma~\ref{lem:extension}. 

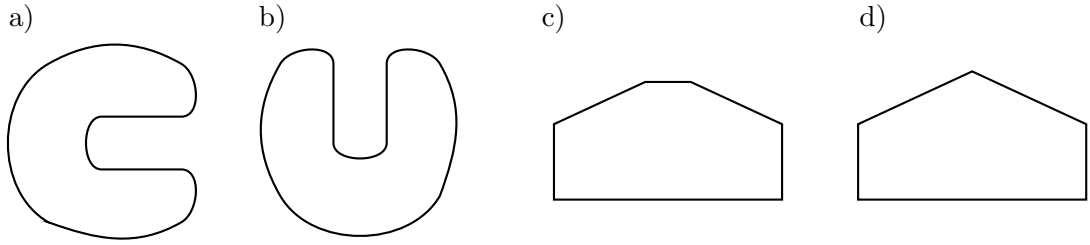
\begin{figure}
\begin{tikzpicture}

\def\material{(0.5,0) to[out=340, in=210] (3,0) to[out=30, in=0] (3,1) to (1.5,1) to[out=180, in=180] (1.5,2) to (3,2) to[out=0, in=330] (3,3) to[out=150, in=30] (0.5,3) to[out=210, in=150] cycle}

\begin{scope} 
        \draw[ thick, black ,fill opacity=.3, scale = 0.7] \material;
\end{scope}

\begin{scope}[xshift =5.5cm]
        \draw[ thick, black ,fill opacity=.3, scale = 1, rotate=90,scale=0.7] \material;
\end{scope}
\begin{scope}[yshift = -0.7cm, xshift = 7cm]
	\def\material{ (0,1) -- (3,1) -- (3,2) -- (1.8,2.56) -- (1.2,2.56) -- (0,2) -- cycle}
        \draw[ thick, black ,fill opacity=.3, scale = 1] \material;
\end{scope}
\begin{scope}[yshift = -0.7cm, xshift =11cm]

\def\material{ (0,1) -- (3,1) -- (3,2) -- (1.5,2.7) -- (0,2) -- cycle}
        \draw[ thick, black ,fill opacity=.3, scale = 1] \material;
\end{scope}
\draw (0,2.7) node{a)};
\draw (3.3,2.7) node{b)};
\draw (7,2.7) node{c)};
\draw (11.2,2.7) node{d)};
\end{tikzpicture}
\caption{Illustration of Definitions \ref{def:connected} and \ref{def:flat}: Examples of bounded Lipschitz domains that are a) cross-section connected, b) not cross-section connected, c) flat, d) not flat. }
\label{fig:domains}
\end{figure}

\begin{definition}[Flatness]\label{def:flat} 
We call an open set $\Omega\subset \R^n$ flat, if for all $t\in \R$ the intersection of $\overline{\Omega}$ with the hyperplane $H_t=\{x\in \R^n: x_n=t\}$ is either empty or has nonempty relative interior.
\end{definition}
The intuitive geometric interpretation of flatness of bounded domains is that it rules out sets with sharp or rounded corners and peaks pointing in the direction of $e_n$. Simple examples include cylinders with axis parallel to $e_n$, whereas cones with the same orientation are not flat, see also Fig.~\ref{fig:domains}\,c), d).  
A bounded Lipschitz domain $\Omega\subset\R^n$ does in general not satisfy the condition of Definition~\eqref{def:flat}, but it can be turned into a flat Lipschitz domain by cutting it off on top and bottom, i.e.,~by taking $(\R^{n-1}\times (a, b))\cap \Omega$, where $a, b\in\R$ with $a<b$ are such that the cross sections $\Omega_a$ and $\Omega_b$ are non-empty. 
\begin{corollary}\label{cor:rigidity} 
In addition to the assumptions of Theorem~\ref{theo:rigidity_general}, let $\Omega\subset \R^n$ be a flat and cross-section connected Lipschitz domain. Then 
the representation~\eqref{representation_u} holds with $R\in W^{1,p}(\Omega;SO(n))$ and $b\in W^{1,p}(\Omega;\R^n)$.
\end{corollary}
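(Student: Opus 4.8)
The plan is to promote the \emph{local} representation furnished by Theorem~\ref{theo:rigidity_general} to a global one, using cross-section connectedness to turn $R$ and $b$ into genuinely one-dimensional maps and flatness to control their derivatives up to the top and bottom faces of $\Omega$.

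To begin, Theorem~\ref{theo:rigidity_general} gives $u(x)=R(x)x+b(x)$ with $R\in W^{1,p}_{\rm loc}(\Omega;SO(n))$, $\nabla'R=0$, and $b\in W^{1,p}_{\rm loc}(\Omega;\R^n)$, $\nabla'b=0$. In particular $R$ is locally one-dimensional in $e_n$-direction, so by Lemma~\ref{lem:extension} and the assumed cross-section connectedness of $\Omega$ it is globally one-dimensional: $R(x)=\rho(x_n)$ for a map $\rho\colon I\to SO(n)$ with $\rho\in W^{1,p}_{\rm loc}(I;SO(n))$, where $I=(a,b)$ is the (bounded, open) projection of $\Omega$ onto the $x_n$-axis. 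Denoting by $\Omega_t$ the cross-section of $\Omega$ at height $t$ and using $|\rho(t)|=\sqrt n$ and $|\Omega_t|\le\diam(\Omega)^{n-1}$, one has
\begin{align*}
\|R\|_{W^{1,p}(\Omega;\R^{n\times n})}^p=\int_I|\Omega_t|\big(n^{p/2}+|\rho'(t)|^p\big)\dd t\le\diam(\Omega)^{n-1}\big(n^{p/2}|I|+\|\rho'\|_{L^p(I)}^p\big),
\end{align*}
so it suffices to prove $\rho'\in L^p(I)$. Once $R\in W^{1,p}(\Omega;SO(n))$ is known, the function $b$, which by the representation equals $x\mapsto u(x)-R(x)x$, also lies in $W^{1,p}(\Omega;\R^n)$: indeed $u\in W^{1,p}(\Omega;\R^n)$, $R\in W^{1,p}(\Omega;\R^{n\times n})\cap L^\infty$, the map $x\mapsto x$ is smooth with bounded value and gradient on the bounded set $\Omega$, and the product rule applies. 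This reduces the corollary to the single claim $\rho'\in L^p(I)$.

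For this I would use the sharp form of estimate~\eqref{est_diffquotient}/\eqref{est_diffquotient2} established inside the proof of Theorem~\ref{theo:rigidity_general}, namely: for every open cube $O\subset\R^{n-1}$ of side length $l$ and every pair of intervals $J\subset\subset J''$ with $O\times J''\subset\Omega$ one has $\|\rho'\|_{L^p(J)}^p\le Cl^{-p-n+1}$, with $C$ depending only on $n,p,\lambda,\Omega$ and the constant in~\eqref{approx_diffinclusion}, and in particular independent of $O$ and $J$. It then suffices to cover $I$ by a finite family of such admissible slabs together with two fixed slabs reaching the endpoints. For the interior, fix any compact subinterval $[a_1,b_1]\subset I$; for each $t_0\in[a_1,b_1]$ pick a point in the nonempty open set $\Omega_{t_0}$, and since $\Omega$ is open there are a cube $O_{t_0}$ and $\delta_{t_0}>0$ with $O_{t_0}\times(t_0-\delta_{t_0},t_0+\delta_{t_0})\subset\Omega$; finitely many of the intervals $(t_0-\tfrac12\delta_{t_0},t_0+\tfrac12\delta_{t_0})$ cover $[a_1,b_1]$, and applying the displayed bound on each yields $\rho'\in L^p(a_1,b_1)$. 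The new point, and the only place flatness is used, is the behavior near the endpoints: since $a=\min_{\overline\Omega}x_n$ is attained, $\overline\Omega\cap H_a\ne\emptyset$, so by flatness $\overline\Omega\cap H_a$ contains a closed $(n-1)$-cube $\overline{O_a}$; since $a$ is the minimal height and $\partial\Omega$ is Lipschitz, $\Omega$ contains a vertical slab $O_a'\times(a,a+\delta_a)$ with $O_a'\subset O_a$ a slightly smaller open cube and some $\delta_a>0$. Applying the bound above to $O_a'\times J$ with $J\uparrow(a,a+\delta_a)$ and letting $J$ increase (the constant $C$ being independent of $J$) yields $\|\rho'\|_{L^p(a,a+\delta_a)}^p<\infty$, and symmetrically near $b$ with some $\delta_b>0$. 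Taking $a_1=a+\tfrac12\delta_a$ and $b_1=b-\tfrac12\delta_b$ (or, if $I$ is so short that these slabs already overlap, just the two endpoint slabs) completes a cover of $I$ and gives $\rho'\in L^p(I)$.

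The step I expect to be the main obstacle is the geometric fact that flatness together with Lipschitz regularity of $\partial\Omega$ yields a genuine vertical slab $O_a'\times(a,a+\delta_a)\subset\Omega$ abutting the bottom hyperplane $H_a$: one has to verify that at an interior point of the flat bottom patch $\overline{O_a}\times\{a\}$ the local Lipschitz boundary chart can be chosen with $e_n$ as the distinguished direction and with graph function identically equal to $a$ on a neighborhood, which is precisely what the flatness hypothesis (no caps, peaks or rounded corners in the $e_n$-direction) is designed to guarantee. Everything else is a routine assembly of the already-available estimate~\eqref{est_diffquotient2}, a compactness-and-covering argument, and the product rule.
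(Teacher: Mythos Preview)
Your proof is correct and follows essentially the same route as the paper's: both identify $R$ with a one-dimensional map $\rho$ (the paper's $\Sigma$) via cross-section connectedness and Lemma~\ref{lem:extension}, use flatness together with Lipschitz regularity to produce vertical slabs $O\times(a,a+\delta)$ and $O\times(b-\delta,b)$ inside $\Omega$, apply the quantitative bound~\eqref{est_diffquotient}/\eqref{est_diffquotient2} (packaged in the paper as Remark~\ref{rem:rigidity}\,c)) on those slabs, and finish with $b=u-Rx$. Your interior covering argument is harmless but redundant, since $\rho\in W^{1,p}_{\rm loc}(I)$ already yields $\rho'\in L^p$ on every compact subinterval; the paper simply combines this with the two endpoint slabs directly.
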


\begin{proof}
Let $Q_\Omega$ be the smallest open cuboid containing $\Omega$ and let $a, b\in \R$ with $a<b$ and $O_\Omega\subset \R^{n-1}$ be such that $Q_\Omega=O_\Omega\times J_\Omega$ with $J_\Omega=(a, b)$. 
We observe first that due to the connectedness of the cross-sections of $\Omega$, the map $R$ from~\eqref{representation_u} is globally one-dimensional in $e_n$-direction and can thus be identified with a one-dimensional function $\Sigma \in W^{1,p}_{\rm loc}(J_\Omega;SO(n))$, see Lemma~\ref{lem:extension} and~Remark~\ref{rem:extension}.

Moreover, since $\Omega$ is a flat Lipschitz domain there exist $x_a\in \Omega_a$ and $x_b\in \Omega_b$ along with open cuboids $Q_a=O\times (a, a+r)$ and $Q_b=O\times (b-r,b)$ of height $r>0$ and cross-section $O\subset \R^{n-1}$ such that $Q_a\cap Q_\Omega\subset \Omega$ and $Q_b\cap Q_\Omega\subset \Omega$. Applying Remark~\ref{rem:rigidity}\,c) to the restrictions $R_a=R|_{Q_a}$ and $R_b=R|_{Q_b}$ gives that $R_a\in W^{1,p}(Q_a;SO(n))$ and $R_b\in W^{1, p}(Q_b;SO(n))$, which correspond to elements in $\Sigma_a\in W^{1,p}(a, a+r;SO(n))$ and $\Sigma_b\in W^{1,p}(b-r, b;SO(n))$, respectively. 
Hence, $\Sigma\in W^{1,p}(J_\Omega;SO(n))$ and $R\in W^{1,p}(Q_\Omega;SO(n))$, thus also $R\in W^{1,p}(\Omega;SO(n))$. 

Since $b=u-Rx$ with $u\in W^{1,p}(\Omega;\R^m)$, one immediately gets the desired statement for $b$.
\end{proof}

We conclude this section with the following specialization of Corollary~\ref{cor:rigidity}, which involves the additional condition that the limit map is locally volume preserving. 

\begin{corollary}\label{cor:asymptotic_rigidity}
In addition to the assumptions on $\Omega$, $(u_\eps)_\eps$ and $u$ in Corollary~\ref{cor:rigidity}, let $u \in W^{1,r}(\Omega;\R^n)$ for $r \geq n$ be such that $\det \nabla u = 1$ a.e.~in $\Omega$. 

Then the limit representation in~\eqref{representation_u} holds with $Re_n$ constant. If $\Omega$ is simply connected, one has in particular that 
\begin{align}\label{representation_nablau}
\nabla u=QS(\Ibb +a\otimes e_n),
\end{align}
where $Q\in SO(n)$, $S=\diag (S', 1)$ with $S'\in W^{1,p}(\Omega; SO(n-1))$ satisfying $\nabla' S'=0$ 
and $a\in L^{\max\{r, p\}}(\Omega;\R^n)$ with $D'a=(S')^T(\partial_n S')$ and  $a_n=0$.
\end{corollary}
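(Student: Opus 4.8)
The plan is to start from the representation furnished by Corollary~\ref{cor:rigidity} and to read off all the conclusions from the volume constraint. By that corollary, $u(x)=R(x)x+b(x)$ with $R\in W^{1,p}(\Omega;SO(n))$, $b\in W^{1,p}(\Omega;\R^n)$ and $\nabla'R=0$, $\nabla'b=0$; since $\Omega$ is cross-section connected, $R$ and $b$ are globally one-dimensional in $e_n$-direction (Lemma~\ref{lem:extension}), so they may be regarded as absolutely continuous functions $R=R(x_n)$ and $b=b(x_n)$. Differentiating (cf.~\eqref{form_nablau}) gives $\nabla u=R+\big((\partial_nR)x+\partial_nb\big)\otimes e_n=R(\Ibb+a\otimes e_n)$ with $a:=R^T\big((\partial_nR)x+\partial_nb\big)$, and because $\det R=1$ one gets $\det\nabla u=1+a_n$. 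Hence the hypothesis $\det\nabla u=1$ a.e.\ is equivalent to $a_n=0$ a.e.\ in $\Omega$, and everything will be deduced from this.

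The crucial step is to extract pointwise information from $a_n=0$. Writing $A:=R^T\partial_nR$, which is skew-symmetric since $R\in SO(n)$, we have $a_n=(Re_n)^T(\partial_nR)x+(Re_n)^T\partial_nb=\sum_{j=1}^{n-1}A_{nj}(x_n)\,x_j+(Re_n)^T\partial_nb(x_n)$, an affine function of $x'=(x_1,\dots,x_{n-1})$ for each fixed $x_n$. By Fubini, $a_n=0$ a.e.\ forces this affine map to vanish a.e.\ on the section $\Omega_{x_n}$ for a.e.\ $x_n$; as $\Omega$ is open this section is open and for a.e.\ $x_n$ nonempty, so the affine map vanishes identically there, giving $A_{nj}(x_n)=0$ for $j=1,\dots,n-1$ and $(Re_n)^T\partial_nb(x_n)=0$ for a.e.\ $x_n$. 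Skew-symmetry then makes the entire $n$-th row and $n$-th column of $A$ vanish, so $Ae_n=0$ and $\partial_n(Re_n)=(\partial_nR)e_n=RAe_n=0$; since $Re_n$ is absolutely continuous in $x_n$, it is constant, which is the first assertion.

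Assume now in addition that $\Omega$ is simply connected. Fixing $t_0$ with $\Omega_{t_0}\neq\emptyset$, put $Q:=R(t_0)\in SO(n)$ and $S:=Q^TR$. Then $S\in W^{1,p}(\Omega;SO(n))$ with $\nabla'S=0$, and $Se_n=Q^T(Re_n)=Q^TR(t_0)e_n=e_n$, so $S$ fixes $e_n$ and hence $S=\diag(S',1)$ with $S'\in W^{1,p}(\Omega;SO(n-1))$ and $\nabla'S'=0$. Consequently $\nabla u=QS(\Ibb+a\otimes e_n)$; evaluating this identity on $e_n$ gives $a=S^TQ^T\partial_nu-e_n$, and since $\Omega$ is bounded and $u\in W^{1,p}(\Omega;\R^n)\cap W^{1,r}(\Omega;\R^n)$ we have $\nabla u\in L^{\max\{r,p\}}(\Omega;\R^{n\times n})$, whence $a\in L^{\max\{r,p\}}(\Omega;\R^n)$. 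Finally, using $Q^T\partial_nR=\partial_nS$ one rewrites $a=S^T(\partial_nS)x+S^TQ^T\partial_nb$; as $\partial_nS=\diag(\partial_nS',0)$, the matrix $S^T\partial_nS=\diag\big((S')^T\partial_nS',0\big)$ is skew-symmetric with vanishing last row and column, which yields $a_n=(Re_n)^T\partial_nb=0$ (consistent with the volume constraint) and, upon differentiating in the $x'$-directions while the $x_n$-dependent term $S^TQ^T\partial_nb$ drops out, $D'a=(S')^T\partial_nS'$.

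I expect the main obstacle to be the middle step: making rigorous, via Fubini together with the openness of the cross-sections of the flat, cross-section connected domain $\Omega$, that a.e.\ vanishing of the $x'$-affine function $a_n$ forces all of its coefficients — equivalently, the whole last row of the skew matrix $R^T\partial_nR$ — to vanish, and then upgrading this via the relation $\partial_nR=RA$ to the statement that $Re_n$ is genuinely constant. The remaining computations, in particular the bookkeeping reconciling the two expressions for $a$ and the $L^{\max\{r,p\}}$-integrability on the bounded domain, are routine.
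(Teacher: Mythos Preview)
Your proof is correct and follows essentially the same route as the paper's: both deduce $a_n=0$ from the volume constraint and then extract from it that the last row and column of the skew matrix $R^T\partial_nR$ vanish, whence $\partial_n(Re_n)=0$ and the factorization $R=QS$ with $S=\diag(S',1)$. The paper phrases the extraction step as differentiating the identity $a_n=\tilde a\cdot Re_n=0$ in the $x'$-directions and using the orthogonality relations among the columns of $R$, whereas you use the equivalent observation that $a_n$ is affine in $x'$ for fixed $x_n$ together with the skew-symmetry of $A=R^T\partial_nR$; the substance is identical.
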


\begin{proof}[Proof of Corollary~\ref{cor:asymptotic_rigidity}] By Theorem~\ref{theo:rigidity_general}, we know that $u$ has the representation~\eqref{representation_u}. Hence, 
\begin{align*}
\nabla u = R+(\partial_n R) x \otimes e_n +\partial_n b\otimes e_n = R+ \tilde a\otimes e_n = R(\Ibb +a\otimes e_n)
\end{align*}
with $\tilde a= (\partial_n R)x + \partial_n b$ and $a=R^T\tilde a$. Since $\det \nabla u=\det (R(\Ibb+a\otimes e_n))= 1+a_n$, we conclude in view of the local volume preservation constraint that $a_n=0$.

Differentiating the identity $0= a_n= \tilde a\cdot Re_n$ with respect to the $i$th variable for $i\in \{1, \ldots, n-1\}$, while taking into account that $\nabla' R=0$ and $\nabla' b=0$, implies that $\partial_n (Re_i)\cdot Re_n =0$.
Since $Re_i$ is orthogonal on $Re_n$ pointwise almost everywhere, it follows from the product rule that 
\begin{align*}
\partial_n (Re_n) \cdot Re_i=0\qquad \text{for $i=1, \ldots, n-1$.}
\end{align*}
Together with 
\begin{align*}
0= \tfrac{1}{2}\partial_n |Re_n|^2 = \partial_n (Re_n) \cdot Re_n,
\end{align*}
we obtain that $\partial_n (Re_n)=0$. Hence, $Re_n$ is constant, and $R$ splits multiplicatively into the product of $Q$ and $S$ as in the statement.

Finally, the restriction on the distributional derivatives of $a$ with respect to the first ${n-1}$ variables follows via straightforward calculation from the gradient structure of $\nabla u$, which requires that $\curl \nabla u= 0$.  
\end{proof}

\begin{remark}\label{rem:2d}
If $n=2$, the gradient representation of $u$ in~\eqref{representation_nablau} becomes
\begin{align*}
\nabla u=Q(\Ibb+ \gamma e_1\otimes e_2),
\end{align*}
with $Q\in SO(2)$ and $\gamma\in L^p(\Omega)$ with $\partial_1\gamma=0$, cf.~also~\cite[Proposition~2.1]{ChK17}. 
In the two-dimensional setting, the class of limit deformations $u$ of $\Omega$ is highly restricted, in fact, only horizontal shearing and global rotation can occur.
\end{remark}


\section{sufficiency statement in Theorem~\ref{prop:rigidity}}\label{sec:suff}

Our starting point in this section are functions $u\in W^{1, p}(\Omega;\R^n)$ with gradients of the form
\begin{align}\label{eq:form_limitgradient}
\nabla u(x) = R(x) +  \partial_n R(x) x  \otimes e_n + d(x)\otimes e_n, \qquad \text{$x\in \Omega$,}
\end{align}
where $R\in W^{1,p}(\Omega;SO(n))$ and $d \in L^p(\Omega;\R^n)$ with $\nabla' R=0$ and $D' d =0$, respectively. 
If not mentioned otherwise, $1<p<\infty$ and $\Omega\subset \R^n$ is a bounded domain. 

We will show how such $u$ (under suitable technical assumptions) can be approximated in the sense of weak convergence in $W^{1,p}(\Omega;\R^{n\times n})$ by functions $u_\eps$ that are defined on a layered domain with length scale of oscillations $\eps$
and coincide with rigid body motions on the stiff components. 
This in particular proves
~Theorem~\ref{prop:rigidity}\,$ii)$. 

Before stating the general result, let us consider a simple example for motivation. 
If $u$ is affine, then $\nabla u=F$ for some $F\in \R^{n\times n}$ and there exist a matrix $R_F\in SO(n)$ and a vector $d_F\in \R^n$ such that 
$\nabla u = F= R_F+d_F\otimes e_n$. This motivates the definition 
\begin{align}\label{def:Acal}
\Acal=\{F\in \R^{n\times n}:  F = R_F + d_F \otimes e_n \text{ with $R_F \in SO(n)$ and $d_F \in \R^n$}\}.
\end{align}
Moreover, we set 
\begin{align}\label{Flambda}
F_\lambda=R_F+\tfrac{1}{\lambda} d_F\otimes e_n \quad \text{for $F\in \Acal$.}
\end{align}
In the affine case, the construction of a suitable approximation is particularly simple. The idea is to compensate for the stiff layers by performing stronger deformations on the softer layers, which leads to the following laminate construction. For $\eps>0$, let $v_\epsilon^F \in  W^{1,\infty}(\Omega;\R^n)$ be such that 
\begin{align}\label{eq:affine}
\nabla v_\epsilon^F =  R_F \mathbbm{1}_{\eps\Ystiff\cap \Omega} + F_\lambda \mathbbm{1}_{\eps\Ysoft\cap \Omega.} = \begin{cases} R_F &\text{on~} \epsilon\Ystiff\cap \Omega, \\
F_\lambda &\text{on~}  \epsilon\Ysoft\cap \Omega.
\end{cases}  
\end{align}
Then, $\nabla v_\eps^F \in SO(n)$ a.e.~in $\eps\Ystiff \cap \Omega$ and $\nabla v_\eps^F = R+ \frac{1}{\lambda} \mathbbm{1}_{\eps\Ysoft} d\otimes e_n\weakly \nabla u$ in $L^p(\Omega;\R^{n\times n})$ as a consequence of the weak convergence of highly oscillating sequences (see e.g.~\cite[Section~2.3]{CiD99}). Finally, we set $u_\eps=v_\eps^F$ for all $\eps$ to obtain the desired approximating functions in this special case.

The construction behind the general approximation result is inspired by the case of affine limits. In view of~\eqref{Flambda}, we have 
\begin{align}\label{Ulambda}
(\nabla u)_\lambda(x) = (\nabla u(x))_\lambda= R(x)+\tfrac{1}{\lambda}(\partial_n R)(x)x \otimes e_n + \tfrac{1}{\lambda}d(x)\otimes e_n, \quad x\in \Omega. 
\end{align}

\begin{proposition}\label{prop:restricted_approximation}
Let $\Omega\subset \R^n$ be a bounded, flat and cross-section connected Lipschitz domain and let $u\in W^{1,p}(\Omega;\R^n)$ with $\nabla u$
 as in~\eqref{eq:form_limitgradient}.
Then there exists a sequence $(u_\eps)_\eps\subset W^{1,p}(\Omega;\R^n)$ with $\nabla u_\eps\in SO(n)$ a.e.~in $\eps\Ystiff\cap \Omega$ such that $\nabla u_\eps\weakly \nabla u$ in $L^{p}(\Omega;\R^{n\times n})$.

More specifically, there is $(R_\epsilon)_\epsilon \subset W^{1,p}(\Omega;SO(n))$ with $\nabla' R_\eps =0$ on $\Omega$ and $\partial_n  R_\eps =0$ on $\epsilon \Ystiff\cap \Omega$ such that
\begin{align} \label{eq:approximation1}
\nabla u_\epsilon = R_\epsilon \text{~in~} \epsilon\Ystiff \cap \Omega, 
\end{align}
and 
\begin{align} \label{eq:approximation2}
R_\epsilon \rightharpoonup R \text{~in~} W^{1,p}(\Omega;\R^{n\times n}) \quad \text{and} \quad \|\nabla u_\epsilon - (\nabla u)_\lambda\|_{L^p(\epsilon\Ysoft \cap\Omega;\R^{n\times n})} \rightarrow 0 \text{~as~} \epsilon \rightarrow 0.
\end{align}
\end{proposition}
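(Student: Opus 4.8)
The plan is to carry over the affine laminate of~\eqref{eq:affine} to the general case layer by layer: on each stiff layer we let $u_\eps$ be a \emph{rigid} affine map whose rotation is a frozen value of $R$, and on each soft layer we interpolate affinely in the $e_n$-direction, both to restore continuity and to let the softer layers carry the extra factor $\tfrac1\lambda$ in~\eqref{Ulambda}, exactly as in the model case.

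\emph{Reduction to the one-dimensional picture.} Since $\nabla'R=0$ and $\Omega$ is cross-section connected, $R$ is globally one-dimensional in $e_n$-direction (Lemma~\ref{lem:extension}, cf.\ Remark~\ref{rem:extension} and the proof of Corollary~\ref{cor:rigidity}), so $R(x)=\Sigma(x_n)$ with $\Sigma\in W^{1,p}(J;SO(n))$, $J$ the projection of $\Omega$ onto the $x_n$-axis; likewise $b:=u-Rx$ has $\nabla b=d\otimes e_n$ with $D'd=0$, hence $b(x)=\beta(x_n)$, $\beta\in W^{1,p}(J;\R^n)$, $\beta'=d$, and thus $u(x)=\Sigma(x_n)x+\beta(x_n)$. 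Extending $\Sigma,\beta$ constantly beyond $\overline J$ we may assume they live on an open interval $J'\supset\supset J$, and we perform the construction on a cuboid $Q=O\times J'\supset\Omega$, restricting to $\Omega$ in the end. Finally, mollifying $\Sigma$ and composing it with the smooth nearest-point retraction onto $SO(n)$, mollifying $\beta$, and using a diagonal argument (all quantitative bounds below being uniform), it suffices to treat $\Sigma\in C^2(\overline{J'};SO(n))$, $\beta\in C^2(\overline{J'};\R^n)$.

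\emph{The construction.} For $\eps\in(0,1)$ and $i\in\Z$ let $S^i_\eps=\{x_n\in(\eps(i+\lambda),\eps(i+1))\}$, $T^i_\eps=\{x_n\in(\eps i,\eps(i+\lambda))\}$, and $t_i=\eps(i+\lambda)$. On $S^i_\eps$ set $u_\eps(x)=\Sigma(t_i)x+\beta(t_i)$ (so $\nabla u_\eps=\Sigma(t_i)\in SO(n)$ there), and on $T^i_\eps$ let $u_\eps$ be the affine-in-$x_n$ interpolation between the trace of the $S^{i-1}_\eps$-map on $\{x_n=\eps i\}$ and the trace of the $S^i_\eps$-map on $\{x_n=\eps(i+\lambda)\}$; then $u_\eps$ is Lipschitz and continuous across all interfaces, hence $u_\eps\in W^{1,p}(\Omega;\R^n)$ with $\nabla u_\eps\in SO(n)$ a.e.\ on $\eps\Ystiff\cap\Omega$. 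Put $R_\eps(x):=\Sigma(\kappa_\eps(x_n))$, where $\kappa_\eps:\R\to J'$ equals $t_i$ on $S^i_\eps$ and interpolates affinely between $t_{i-1}$ and $t_i$ on $T^i_\eps$; then $\kappa_\eps\to\mathrm{id}$ uniformly, $R_\eps$ is $SO(n)$-valued, $\nabla'R_\eps=0$, $\partial_n R_\eps=0$ on $\eps\Ystiff$, and $\nabla u_\eps=R_\eps$ on $\eps\Ystiff\cap\Omega$, giving~\eqref{eq:approximation1}. For the convergences: on $\eps\Ystiff\cap\Omega$ one has $\nabla u_\eps=\Sigma(t_i)\to R$ uniformly, and on $T^i_\eps$ a direct computation using $\Sigma,\beta\in C^2$ and that $T^i_\eps$ has $x_n$-width $\le\eps$ gives $\nabla' u_\eps=\Sigma(x_n)|_{\text{first }n-1\text{ cols}}+O(\eps)$ and $\partial_n u_\eps=\Sigma(x_n)e_n+\tfrac1\lambda\Sigma'(x_n)x+\tfrac1\lambda\beta'(x_n)+O(\eps)$, the translations $\beta(t_i)$ being precisely what produces the term $\tfrac1\lambda\beta'(x_n)$; comparing with~\eqref{Ulambda} this yields the second part of~\eqref{eq:approximation2}. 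Consequently $\nabla u_\eps-\big(R\,\mathbbm1_{\eps\Ystiff}+(\nabla u)_\lambda\,\mathbbm1_{\eps\Ysoft}\big)\to0$ in $L^p(\Omega;\R^{n\times n})$, and since $\mathbbm1_{\eps\Ystiff}\weaklystar1-\lambda$, $\mathbbm1_{\eps\Ysoft}\weaklystar\lambda$ and $(1-\lambda)R+\lambda(\nabla u)_\lambda=\nabla u$, the lemma on oscillating coefficients gives $\nabla u_\eps\weakly\nabla u$ in $L^p$. Lastly $R_\eps\to R$ uniformly, and a change of variables $x_n\mapsto\kappa_\eps(x_n)$ on each soft layer (where $\kappa_\eps'=1/\lambda$) shows $\|\partial_n R_\eps\|_{L^p(\Omega)}^p\le C\lambda^{1-p}\|\Sigma'\|_{L^p(J')}^p$ uniformly in $\eps$, so $(R_\eps)_\eps$ is bounded in $W^{1,p}$ and $R_\eps\weakly R$ in $W^{1,p}(\Omega;\R^{n\times n})$, which finishes~\eqref{eq:approximation2}.

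\emph{Main obstacle.} The delicate point is the soft-layer analysis: converting the formal expansion of $\partial_n u_\eps$ into a genuine $o(1)$-bound in $L^p(\eps\Ysoft\cap\Omega)$, and in particular verifying that the frozen translations $\beta(t_i)$ reproduce exactly the $\tfrac1\lambda\,d\otimes e_n$ contribution of $(\nabla u)_\lambda$ rather than, say, the layer-average of $\partial_n u$. For merely Sobolev $\Sigma,\beta$ the pointwise $O(\eps)$ estimates are unavailable and would have to be replaced by mollification/Lebesgue-point arguments, which is exactly why reducing to the smooth case first is convenient. A secondary technical wrinkle is keeping $R_\eps$ genuinely $SO(n)$-valued while still controlling $\|R_\eps\|_{W^{1,p}}$ uniformly — handled by the substitution $R_\eps=\Sigma\circ\kappa_\eps$ together with the change of variables above.
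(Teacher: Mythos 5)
Your construction is correct, and it does reach the same result by a route that differs from the paper's in two respects worth noting. First, the paper never builds the potential $u_\eps$ by interpolation; it instead defines a candidate gradient field $U_\eps = R_\eps\mathbbm{1}_{\eps\Ystiff} + U_{\lambda,\eps}\mathbbm{1}_{\eps\Ysoft}$ directly (with $R_\eps = \Sigma\circ\varphi_\eps$ from Lemma~\ref{lem:approximation_R}, essentially your $\Sigma\circ\kappa_\eps$ up to a shift of the frozen value within the stiff layer) and then verifies $\curl U_\eps = 0$ on the simply connected cuboid $Q_\Omega$ to produce a primitive. This is exactly where the structural hypotheses $\nabla' R_\eps = 0$, $\partial_n R_\eps = 0$ on $\eps\Ystiff$, and $\nabla' d = 0$ enter, and it sidesteps any pointwise Taylor expansion. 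You instead construct $u_\eps$ explicitly as a piecewise rigid / affinely interpolated map, which makes the gradient structure automatic and produces the $\tfrac{1}{\lambda}d\otimes e_n$ term "for free" from the frozen translations $\beta(t_i)$ — a pleasant observation — but forces you to control $\partial_n u_\eps$ on the soft layers by a Taylor-type estimate. Second, precisely because of that Taylor estimate, you pre-smooth $\Sigma$ and $\beta$ to $C^2$ and then diagonalize. The paper avoids this entirely: working directly with $W^{1,p}$ data, the chain rule for absolutely continuous functions gives $\Sigma_\eps' = (\Sigma'\circ\varphi_\eps)\varphi_\eps'$, and the soft-layer convergence~\eqref{eq:Approximation} is obtained by approximating $\Sigma'$ by smooth $g_j$ in $L^p$ only at the level of the derivative, with no projection back to $SO(n)$ needed. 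The net effect is that the paper's argument is shorter and has no diagonalization step, while yours is more explicit about the primitive and the mechanism by which the $1/\lambda$ amplification arises; both are sound, though in your version the diagonal argument should be spelled out carefully, as the implicit constants in your $O(\eps)$ bounds degrade with the mollification parameter, and the $SO(n)$-valued retraction needs the mollified field to stay in the tubular neighborhood where the retraction is defined (guaranteed for small mollification radius by uniform continuity of $\Sigma$).
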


\begin{remark}\label{rem:domain}
The same result still holds also under relaxed conditions on a bounded Lipschitz domain $\Omega$, namely when $\Omega$ can be partitioned into finitely many components that are flat and cross-section connected. 
More details can be found in~\cite[Section~4.2]{Chr18}. 
\end{remark}

\begin{proof}
Let $Q_\Omega$ denote the smallest cuboid containing $\Omega$.
By~\eqref{eq:form_limitgradient} and Lemma~\ref{lem:extension} (see also Remark~\ref{rem:extension}\,b)), we may assume after constant extension orthogonal to $e_n$ that $R\in W^{1, p}(Q_\Omega;SO(n))$ is globally one-dimensional in $e_n$-direction and continuous. 
Upon writing $Q_\Omega=O_\Omega\times J_\Omega$ with $O_\Omega\subset \R^{n-1}$ an open cuboid and $J_\Omega\subset \R$ an open, bounded interval, there is a one-dimensional function $\Sigma\in W^{1,p}(J_Q; SO(n))$ such that $R(x)= \Sigma(x_n)$ for~$x\in Q_\Omega$. 

Let $(\Sigma_\eps)_\eps \subset W^{1,p}(J_\Omega;SO(n))$ be the approximating sequence for $\Sigma$ resulting from Lemma~\ref{lem:approximation_R} below, that is, $\Sigma_\eps \rightharpoonup \Sigma$ in $W^{1,p}(J_\Omega;\R^{n\times n})$ and $\Sigma_\eps' = 0$ in $\eps I_{\rm stiff} \cap J_\Omega$. Moreover, the convergence~\eqref{eq:Approximation} holds. 
We set $R_\eps(x)=\Sigma_\eps(x_n)$ for $x\in Q_\Omega$, so that $R_\eps\in W^{1,p}(Q_\Omega;SO(n))$ with $\nabla' R_\eps=0$, and
define 
\begin{align*}
U_\epsilon = R_\epsilon\mathbbm{1}_{\epsilon\Ystiff \cap Q_\Omega} +U_{\lambda,\eps}\mathbbm{1}_{\epsilon\Ysoft\cap Q_\Omega},
\end{align*}
where \begin{align*}
U_{\lambda,\eps}(x) = R_\eps(x)+ (\partial_n R_\epsilon)(x) x\otimes e_n + \tfrac{1}{\lambda} d(x)\otimes e_n, \quad x\in Q_\Omega.
\end{align*}
We claim that for each $\eps >0$ the function $U_\eps$ has gradient structure, meaning that there exists a potential $u_\eps\in W^{1,p}(Q_\Omega;\R^n)$ with $\nabla u_\eps = U_\eps$. To see this, it suffices to show that the the distributional curl of $U_\eps$ vanishes on $Q_\Omega$. We remark that $Q_\Omega$ as a cuboid is simply connected. Indeed, let $\varphi \in C^\infty_c(Q_\Omega;\R^n)$ and $k, l\in \{1, \ldots, n\}$ with $k<l$. Due to $\nabla'R_\eps=0$ and $\nabla' d=0$, one obtains in the case $l<n$ that
\begin{align*}
\int_{Q_\Omega} U_\epsilon e_k \cdot \partial_l \varphi \dd x - \int_{Q_\Omega} U_\eps e_l \cdot \partial_k \varphi \dd x =\int_{Q_\Omega} R_\epsilon e_k \cdot \partial_l \varphi \dd x - \int_{Q_\Omega} R_\eps e_l \cdot \partial_k \varphi \dd x = 0,
\end{align*} 
and for $l=n$, along with $\partial_n R_\eps=0$ on $\eps\Ystiff\cap Q_\Omega$, that
\begin{align*}
\int_{Q_\Omega} U_\epsilon e_k \cdot \partial_n \varphi \dd x - \int_{Q_\Omega} U_\eps e_n \cdot \partial_k \varphi \dd x
&= \int_{Q_\Omega} R_\eps e_k \cdot \partial_n \varphi \dd x  - \int_{Q_\Omega} \mathbbm{1}_{\eps\Ysoft} (\partial_n R_\eps) x  \cdot \partial_k \varphi \dd x\\
&= \int_{Q_\Omega} R_\eps e_k\cdot \partial_n \varphi \dd x + \int_{Q_\Omega} (\partial_n R_\eps)e_k\cdot \varphi \dd x=0
\end{align*} 
Thus, $\curl U_\eps=0$ as desired.  

After restricting $u_\eps$ and $R_\eps$ to $\Omega$, the statements~\eqref{eq:approximation1} and~\eqref{eq:approximation2} follow now directly from the properties of the sequence $(\Sigma_\eps)_\eps$. 
\end{proof}

The proof of the previous proposition builds on the following structure preserving approximation result for one-dimensional functions with values in the set of rotations.  
Let us denote by $\Isoft$ the $1$-periodic extensions of the interval $(0, \lambda)$ to the real line, which
corresponds to a one-dimensional section of $\Ystiff$ in $e_n$-direction, that is~$\Ystiff = \R^{n-1}\times \Istiff$. Besides, we set $\Istiff= \R^n\setminus \Isoft$.

\begin{lemma} \label{lem:approximation_R}
Let $J\subset\R$ be an open and bounded interval, $1\leq p<\infty$ and $\Sigma\in W^{1,p}(J;SO(n))$. Then there exists a sequence $(\Sigma_\eps)_\eps\subset W^{1,p}(J;SO(n))$ with 
\begin{align*}
\Sigma_\eps'= 0  \quad \text{a.e. in~} \epsilon\Istiff \cap J
\end{align*}
such that $\Sigma_\eps \weakly \Sigma$ in $W^{1,p}(J; \R^{n\times n})$. Furthermore, 
\begin{align} \label{eq:Approximation}
 \| \Sigma_\epsilon'- \tfrac{1}{\lambda} \Sigma'\|_{L^p(\epsilon\Isoft \cap J; \R^{n\times n})} \rightarrow 0 \quad \text{as~} \epsilon \rightarrow 0. 
\end{align}
\end{lemma}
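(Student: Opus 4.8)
The plan is to realize each $\Sigma_\eps$ as a reparametrisation $\Sigma_\eps=\Sigma\circ\tau_\eps$ of $\Sigma$ by a piecewise affine "time change" $\tau_\eps$ that stands still on the stiff subintervals and runs at speed $\tfrac1\lambda$ on the soft ones. First I would extend $\Sigma$ to $W^{1,p}_{\rm loc}(\R;SO(n))$ by the constant values $\Sigma(a)$ and $\Sigma(b)$, where $J=(a,b)$; this leaves $\Sigma$ unchanged on $J$, keeps it $SO(n)$-valued, and makes its (extended) weak derivative supported in $J$. Next, write $\eps\Isoft=\bigcup_k(\eps k,\eps k+\lambda\eps)$ and $\eps\Istiff=\bigcup_k[\eps k+\lambda\eps,\eps(k+1))$, and define $\tau_\eps:\R\to\R$ by $\tau_\eps(\eps k)=\eps k$ for all $k\in\Z$, $\tau_\eps'=\tfrac1\lambda$ on each soft cell, and $\tau_\eps'=0$ on each stiff cell. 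One checks at once that $\tau_\eps$ is continuous, nondecreasing and Lipschitz, fixes the grid $\eps\Z$, and maps every cell $[\eps k,\eps(k+1))$ into $[\eps k,\eps(k+1)]$, so that $0\le\tau_\eps(t)-t\le\eps$ for all $t$. Setting $\Sigma_\eps:=(\Sigma\circ\tau_\eps)|_J$, the chain rule applied cellwise (trivial, since $\tau_\eps$ is piecewise affine and $\Sigma_\eps$ is constant on each stiff cell and continuous across junctions) gives $\Sigma_\eps\in W^{1,p}(J;SO(n))$ with $\Sigma_\eps'=\tau_\eps'\,(\Sigma'\circ\tau_\eps)$ a.e.; in particular $\Sigma_\eps'=0$ on $\eps\Istiff\cap J$ and $\Sigma_\eps'=\tfrac1\lambda\,\Sigma'\circ\tau_\eps$ on $\eps\Isoft\cap J$.

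Then I would establish the uniform $W^{1,p}$-bound and weak convergence. On a soft cell $\tau_\eps$ is affine with slope $\tfrac1\lambda$ onto a length-$\eps$ interval, so the change of variables gives $\int_{\eps k}^{\eps k+\lambda\eps}|\Sigma'\circ\tau_\eps|^p\,dt=\lambda\int_{\eps k}^{\eps(k+1)}|\Sigma'|^p\,ds$; summing over the cells meeting $J$ and using that the open intervals $(\eps k,\eps(k+1))$ are disjoint yields $\int_{\eps\Isoft\cap J}|\Sigma'\circ\tau_\eps|^p\,dt\le\lambda\|\Sigma'\|_{L^p(J)}^p$, whence $\|\Sigma_\eps'\|_{L^p(J)}^p\le\lambda^{1-p}\|\Sigma'\|_{L^p(J)}^p$; together with $|\Sigma_\eps|\le\sqrt n$ this bounds $(\Sigma_\eps)_\eps$ in $W^{1,p}(J;\R^{n\times n})$. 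Since $\Sigma$ is absolutely continuous, hence uniformly continuous on compact sets, and $\|\tau_\eps-\mathrm{id}\|_{L^\infty(J)}\le\eps\to0$, we get $\Sigma_\eps\to\Sigma$ uniformly on $J$, in particular in $L^p(J;\R^{n\times n})$; combined with the $W^{1,p}$-bound this forces $\Sigma_\eps\weakly\Sigma$ in $W^{1,p}(J;\R^{n\times n})$.

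It remains to prove~\eqref{eq:Approximation}, i.e. $\|\Sigma'\circ\tau_\eps-\Sigma'\|_{L^p(\eps\Isoft\cap J)}\to0$, and this is the one nonroutine point, since $\Sigma'$ is merely $L^p$ and uniform continuity is unavailable. I would argue by density: given $\delta>0$, pick $g\in C(\overline J;\R^{n\times n})$ with $\|g-\Sigma'\|_{L^p(J)}<\delta$, and split $\Sigma'\circ\tau_\eps-\Sigma'=(\Sigma'-g)\circ\tau_\eps+(g\circ\tau_\eps-g)+(g-\Sigma')$. The middle term tends to $0$ uniformly on $J$ by uniform continuity of $g$ and $\|\tau_\eps-\mathrm{id}\|_\infty\le\eps$; the last term is $<\delta$ in $L^p(J)$; and for the first term the same cellwise change of variables as above gives $\|(\Sigma'-g)\circ\tau_\eps\|_{L^p(\eps\Isoft\cap J)}^p\le\lambda\|\Sigma'-g\|_{L^p(J)}^p<\delta^p$. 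Hence $\limsup_{\eps\to0}\|\Sigma'\circ\tau_\eps-\Sigma'\|_{L^p(\eps\Isoft\cap J)}\le C_p\delta$, and letting $\delta\to0$ concludes. The only care-point to flag is the behaviour near the endpoints of $J$: cells meeting $J$ only partially are harmless, because $\tau_\eps$ still maps them into length-$\eps$ intervals lying in any fixed neighbourhood of $\overline J$ once $\eps$ is small, a region where the constant extension of $\Sigma$ has vanishing derivative.
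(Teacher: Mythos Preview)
Your proof is correct and follows essentially the same approach as the paper: define a piecewise affine reparametrisation that freezes on $\eps\Istiff$ and runs at speed $\tfrac1\lambda$ on $\eps\Isoft$, set $\Sigma_\eps=\Sigma\circ\tau_\eps$, get the $W^{1,p}$-bound by a cellwise change of variables, strong $L^p$-convergence from $\|\tau_\eps-\id\|_\infty\to0$, and~\eqref{eq:Approximation} by a density argument. The only cosmetic differences are that the paper extends $\Sigma$ to a larger interval $J'\supset\supset J$ by reflection (rather than by constants) and takes the approximants $g$ in $C_c^\infty(J')$, which makes the endpoint issue you flag disappear automatically; your constant extension works equally well once you also extend $g$ continuously past $b$ (or simply take $g\in C_c(\R)$ approximating the extended $\Sigma'$), a point you allude to but do not spell out.
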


\begin{proof}
Instead of trying to approximate $\Sigma$ directly with $SO(n)$-valued functions, it seems easier to parametrize $\Sigma$ in a suitable way. Intuitively speaking, the idea is to stop the parametrization on the stiff layers and accelerate it on the softer ones. 

More precisely, for every $\eps>0$, take $\varphi_\eps: \R \to \R$ as the piecewise affine function defined by
\begin{align*}
\varphi_\eps(t) = \eps \lceil t\rceil \quad \text{for $t \in \eps\Istiff$,}
\end{align*}
and by linear interpolation on $\eps\Isoft$, see Fig~\ref{fig:varphi}. By construction, one has that
\begin{align}\label{derivative}
\varphi_\eps' = \tfrac{1}{\lambda} \qquad \text{ on $\eps \Isoft$,}
\end{align}
and $(\varphi_\epsilon)_\epsilon$ converges locally uniformly to the identity function on $\R$ for $\eps\to 0$.

\begin{figure}
\begin{tikzpicture}
\draw[thick] (0,0) -- (1, 1.5) -- (1.5,1.5) -- (2.5,3) -- (3,3);
\draw[dashed] (-0.15,-0.15) -- (3.15,3.15);
\draw (1.3,2.3) node{$\varphi_\epsilon$};
\draw (2.6,1.9) node{$\id_{\R}$};
\draw (3.9,-0.03) node{$t$};

\draw[->] (-0.5,0) -- (3.5,0);
\draw[->] (0,0) -- (0,3.2);
\draw (0,-0.1) -- (0,0.1);
\draw (1,-0.1) -- (1,0.1);
\draw (1.5,-0.1) -- (1.5,0.1);
\draw (2.5,-0.1) -- (2.5,0.1);
\draw (3,-0.1) -- (3,0.1);
\draw (-0.1,1.5) -- (0.1,1.5);
\draw (-0.3,1.5) node{$\epsilon$};
\draw (-0.3,3.0) node{$\R$};

\draw (0,-0.3) node{$0$};
\draw (1,-0.3) node{$\epsilon \lambda$};
\draw (1.5,-0.3) node{$\epsilon \vphantom{\lambda}$};
\draw (3,-0.3) node{$2\epsilon$};
\end{tikzpicture}
\caption{Illustration of the re-parametrization function $\varphi_\eps$. }\label{fig:varphi}
\end{figure}
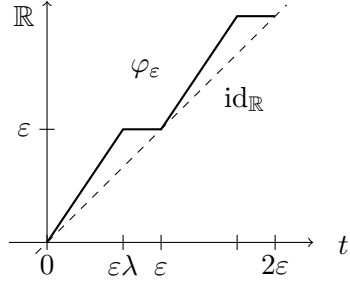

First, we extend the function $\Sigma$ from $J$ to an open real interval $J'$ that contains $J$ compactly. 
In fact, via reflection one obtains $\Sigma \in W^{1,p}(J';SO(n))$ (not renamed) with 
\begin{align*}
\norm{\Sigma}_{W^{1,p}(J', SO(n))}\leq c \norm{\Sigma}_{W^{1,p}(J, SO(n))}<\infty,
\end{align*}
where $c>0$ depends only on $J'$. 

Next, we define $\Sigma_\eps:J\to SO(n)$ by $\Sigma_\eps = \Sigma \circ \varphi_\eps$
for sufficiently small $\eps$.
Notice that $\Sigma_\eps$ is well-defined, since $\varphi_\eps(J)=\varphi_\eps(\eps\Isoft\cap J)\subset J'$ if $\eps$ is small enough. As the composition of an absolutely continuous function with a monotone Lipschitz function, $\Sigma_\eps$ is absolutely continuous. In particular, the chain rule holds (see e.g.~\cite[Theorem 3.44]{Leo09}), i.e.
\begin{align}\label{chainrule}
\Sigma_\eps' = (\Sigma'\circ\varphi_\eps) \varphi_\eps',
\end{align} 
and thus, $\Sigma_\eps\in W^{1,p}(J; SO(n))$. 
Since $\Sigma_\eps\to \Sigma$ pointwise and the functions $|\Sigma_\eps|^2\leq n$ a.e.~in $J$, it follows from Lebesgue's dominated convergence theorem that $\Sigma_\eps\to \Sigma$ in $L^p(J;\R^{n\times n})$. 

For the asserted weak convergence of $(\Sigma_\eps)_\eps$ in $W^{1,p}(J;\R^{n\times n})$, it suffices according to Urysohn's lemma to show that the sequence $(\Sigma_\eps')_\eps$ is uniformly bounded in $L^p(J;\R^{n\times n})$. Indeed,  
\begin{align*}
\norm{\Sigma_\eps'}_{L^p(J;\R^{n\times n})}^p & = \int_{\eps I_{\rm soft}\cap J} |\Sigma'(\varphi_\eps)\varphi_\eps'|^2\dd t =  \frac{1}{\lambda^2} \int_{\eps I_{\rm soft}\cap J} |\Sigma'(\varphi_\eps)|^2\dd t \\ & = \frac{1}{\lambda^2} \sum_{i\in \Z} \int_{\eps(i, i+\lambda)\cap J} |\Sigma'(\varphi_\eps)|^2\dd{t}  \leq \frac{1}{\lambda^2} \sum_{i\in \Z} \int_{\eps(i, i+1)\cap J'} |\Sigma'|^2 |\varphi_\eps'|^{-1} \dd t \\ &= \frac{1}{\lambda} \norm{\Sigma'}^2_{L^2(J';\R^{n\times n})}.
\end{align*}
Here we have exploited~\eqref{chainrule} and~\eqref{derivative}, the fact that $\Sigma_{\eps}$ is constant on $\eps \Istiff$, as well as the chain rule and transformation formula on the (finitely many) connected components of $\eps I_{\rm soft}$, where the restriction of $\varphi_\eps$ is invertible.

To show~\eqref{eq:Approximation}, we approximate $\Sigma'$ in $L^p(J';\R^{n\times n})$ by a sequence $(g_j)_j\subset C^\infty_c(J';\R^{n\times n})$. By change of variables on the connected components of $\eps I_{\rm soft}$ it follows that
\begin{align*}
 \norm{\Sigma'\circ \varphi_\eps - g_j\circ \varphi_\eps}_{L^p(\eps I_{\rm soft}\cap J;\R^{n\times n})}  \leq  \norm{\Sigma' - g_j}_{L^p(J';\R^{n\times n})},
 \end{align*}
and therefore
\begin{align}\label{eq543}
\norm{\Sigma'\circ \varphi_\eps -\Sigma'}_{L^p(\eps I_{\rm soft}\cap J;\R^{n\times n})} \leq \norm{g_j\circ \varphi_\eps - g_j}_{L^p(J;\R^{n\times n})}  +2 \norm{g_j-\Sigma'}_{L^p(J';\R^{n\times n})}.
\end{align}
Since $g_j\circ \varphi_\eps\to g_j$ in $L^p(J;\R^{n\times n})$ for every $j\in \N$ by dominated convergence, 
passing to the limits $\eps\to 0$ and $j\to \infty$ (in this order) in~\eqref{eq543} proves~\eqref{eq:Approximation}. 
\end{proof}

\section{Homogenization of layered high-contrast materials}
 
Before proving Theorem~\ref{theo:hom}, formulated below, we introduce the setting and precise assumptions. 
Throughout this section, $\Omega\subset\R^n$ is a bounded Lipschitz domain that satisfies the flatness condition and connectedness property of Definitions~\ref{def:flat} and~\ref{def:connected}, respectively, and $p > n$. 
For $\eps > 0$ and $\alpha>0$ we consider the heterogeneous energy density $W_\eps^\alpha: \Omega\times \R^{n\times n}\to [0, \infty)$ given by
\begin{align*}
W_\eps^\alpha(x, F) = \begin{cases} \displaystyle \eps^{-\alpha} \Wstiff(F) &\text{if~} x \in \eps\Ystiff\cap \Omega,   \\[0.2cm] \Wsoft (F) &\text{if~} x \in \eps\Ysoft\cap \Omega, \end{cases}
\end{align*}
where $\Wstiff, \Wsoft:\R^{n\times n}\to [0, \infty)$ are continuous functions that satisfy the following conditions regarding convexity, growth and coercivity, and local Lipschitz continuity:
\vspace{0.1cm}
\begin{itemize}
\item[$(H1)$] $\Wsoft^{\rm qc}$ is polyconvex;\\[-0.3cm]
\item[$(H2)$] $c|F|^p - \frac{1}{C} \leq \Wsoft(F) \leq C(1+|F|^p)$ for all $F\in \R^{n\times n}$ with constants $C, c >0$; \\[-0.3cm]
\item[$(H3)$] $|\Wsoft(F) -\Wsoft(G)| \leq L(1+|F|^{p-1}+|G|^{p-1})|F-G|$ for all $F, G\in \R^{n\times n}$ with $L>0$;\\[-0.3cm]
\item[$(H4)$] $\Wstiff(F)\geq k \dist^{p}(F, {SO(n)})$ for all $F\in \R^{n\times n}$ with a constant $k>0$.
\end{itemize}
\vspace{0.1cm}

An equivalent way of expressing $(H1)$ is by 
\begin{align}\label{H1_equi}
\Wsoft^{\rm qc} = \Wsoft^{\rm pc},
\end{align} 
where $\Wsoft^{\rm qc}$ and $\Wsoft^{\rm pc}$ are the quasiconvex and polyconvex envelopes of $\Wsoft$, that is, the largest quasiconvex and polyconvex functions below $\Wsoft$. 
For a detailed introduction to generalized notions of convexity and the corresponding generalized convexifications we refer to~\cite{Dac08}. Let us just recall briefly that a continuous function $W:\R^{n\times n}\to \R$ with standard $p$-growth (i.e., with an the upper bound  as in $(H2)$) is quasiconvex if for any $F\in \R^{n\times n}$,
\begin{align}\label{qc}
\inf_{\varphi\in W_0^{1,p}((0,1)^n;\R^n)} \dashint_{(0,1)^n} W(F+ \nabla \varphi )\dd{x}\geq W(F). 
\end{align}
Moreover, a continuous $W:\R^{n\times n}\to \R$ is polyconvex if there exists a convex function $g:\R^{\tau(n)}\to \R$ such that 
\begin{align*}
W(F)=g(\Mcal(F))\qquad \text{for all $F\in \R^{n\times n}$,}
\end{align*} 
where $\Mcal(F)\in \R^{\tau(n)}$ with $\tau(n) = \sum_{i=1}^n \begin{pmatrix}n \\ k\end{pmatrix}$ is the vector of minors of $F$. 

We remark that explicit formulas for quasiconvex envelopes are in general hard to obtain. This is why  quasiconvexifications are rather rare in the literature, see e.g.~\cite{LDR95, CoT05, CDK13} for a few examples (including extended-valued densities). A common strategy is to determine upper and lower bounds in terms of rank-one and polyconvex envelopes and to show that the latter two match. Hence, in those cases where relaxations are explicitly known, $(H1)$ is usually satisfied. 

\begin{example}\label{ex:SaintVenantKirchhoff}
Let $n=2$ or $n=3$. The Saint Venant-Kirchhoff stored energy function,
\begin{align*}
W_{SK}(F) = \frac{\lambda}{4} |F^TF - \Ibb|^2 +\frac{\mu}{8}(|F|^2-n)^2,\qquad F\in \R^{n\times n},
\end{align*}
with the Lam\'e constants $\lambda, \mu>0$, is one of the simplest energy densities of relevance in hyperelasticity (see e.g.~\cite[Section 28]{Gur81}), and meets requirements for $W_{\rm soft}$.
It is straightforward to see that $W_{SK}$ has standard growth $(H2)$ with $p=4$ and is locally Lipschitz continuous in the sense of $(H3)$. 
In~\cite{LDR95}, Le Dret and Raoult give an explicit expression of the quasiconvexification $W^{\rm qc}_{SK}$, which coincides with the convex, polyconvex and rank-one convex envelopes. Thus, in particular, $(H1)$ is satisfied, too. 
\end{example}

Furthermore, let $E_\eps: L_0^p(\Omega;\R^n) \rightarrow \R \cup \{\infty\}$ be the integral functional with density $W_\eps^\alpha$, i.e.~
\begin{align}\label{energy_inhom_epsilon}
E_\eps(u) = \int_\Omega W_\eps^\alpha(x, \nabla u)\dd x
\end{align}
if $u\in W^{1, p}(\Omega;\R^n)$ and $E_\eps(u)=\infty$ otherwise in $L_0^p(\Omega;\R^n)$.  

Recalling that
$\Acal=\{F\in \R^{n\times n}:  F = R_F + d_F \otimes e_n \text{ with $R_F \in SO(n)$ and $d_F \in \R^n$}\}$ (cf.~\eqref{def:Acal}), we define for $F\in \Acal$,
\begin{align}\label{Whom}
W_{\rm hom}(F) =  \lambda \Wsoft^{\rm{qc}}(F_\lambda)= \lambda \inf_{\varphi\in W_0^{1,p}((0,1)^n;\R^n)} \dashint_{(0,1)^n} \Wsoft \big(F_\lambda + \nabla \varphi \big)\dd{x},
\end{align}
where $F_\lambda =R_F + \tfrac{1}{\lambda}d_F\otimes e_n =\tfrac{1}{\lambda}(F-(1-\lambda)R_F) \in \Acal$.

 Now we are ready to formulate the main theorem of this section. Theorem~\ref{theo:hom} provides a characterization of the effective behavior of the bilayered materials modeled by~\eqref{energy_inhom_epsilon} by homogenization via $\Gamma$-convergence for vanishing layer thickness. The limit problem shows a splitting of the effects of the heterogeneities and relaxation of microstructures on the softer components. With regards to homogenization, the resulting formulas are explicit and can be expressed in terms of the relative layer thickness.  Provided the relaxation of $W_{\rm soft}$ is known, $W_{\rm hom}$ is even fully explicit.  

\begin{theorem}\label{theo:hom}
 If $\alpha>p$, the family $(E_\eps)_\eps$ as in~\eqref{energy_inhom_epsilon} converges in the sense of $\Gamma$-convergence regarding the strong $L^p$-topology to the limit functional $E_{\rm hom}: L^p_0(\Omega;\R^n) \to \R_\infty$ given by
\begin{align*}
E_{\rm hom}(u) = \begin{cases} \displaystyle \int_\Omega W_{\rm hom}(\nabla u)\dd x & \text{if $u(x) = R(x)x+b(x)$ with $R \in W^{1,p}(\Omega;SO(n))$ such} \\[-0.2cm] & \text{that $\nabla' R=0$ and $b \in W^{1,p}(\Omega;\R^n)$ such that $\nabla' b=0$,} \\ 
\infty & \text{otherwise.}
\end{cases}
\end{align*}
Precisely, this means that the following two conditions are satisfied:
\begin{itemize}
\item[$i)$] \textit{(Lower bound)} For each $u \in L^p_0(\Omega;\R^n)$ and any sequence $(u_\epsilon)_\epsilon \subset L^p_0(\Omega;\R^n)$ with $u_\epsilon \rightarrow u$ in $L^p(\Omega;\R^n)$ as $\epsilon \rightarrow 0$ it holds that
\begin{align*}
\liminf_{\epsilon \rightarrow 0} E_\epsilon(u_\epsilon) \geq E_\mathrm{hom}(u);
\end{align*}
\item[$ii)$] \textit{(Existence of recovery sequence)} For each $u \in L^p_0(\Omega;\R^n)$ there exists a sequence $(u_\epsilon)_\epsilon \subset L^p_0(\Omega;\R^n)$ with $u_\epsilon \rightarrow u$ in $L^p(\Omega;\R^n)$ as $\epsilon \rightarrow 0$ such that
\begin{align*}
\lim_{\epsilon \rightarrow 0} E_\epsilon(u_\epsilon) = E_\mathrm{hom}(u). 
\end{align*} 
\end{itemize}

Moreover, any sequence $(u_\eps)_\eps\subset L^p_0(\Omega;\R^n)$ of uniformly bounded energy for $(E_\eps)_\eps$, that is~$E_\eps(u_\eps)<C$ for all $\eps>0$, is relatively compact in $L^{p}(\Omega;\R^n)$. 
\end{theorem}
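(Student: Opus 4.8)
The plan is to deduce from the uniform energy bound a uniform $W^{1,p}(\Omega;\R^n)$-bound for the sequence, and then close the argument with the compact Sobolev embedding. So suppose $E_\eps(u_\eps)\le C$ for all $\eps>0$.

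First I would extract a uniform gradient bound from the coercivity hypotheses. Discarding the nonnegative stiff contribution and using the lower bound in $(H2)$ on the softer layers gives $\|\nabla u_\eps\|_{L^p(\eps\Ysoft\cap\Omega;\R^{n\times n})}^p \le \frac1c\big(C+\tfrac{|\Omega|}{C}\big)$. On the stiff layers, $(H4)$ turns the energy bound into the approximate differential inclusion $\int_{\eps\Ystiff\cap\Omega}\dist^p(\nabla u_\eps,\SO(n))\dd x \le \tfrac{C}{k}\,\eps^\alpha$; since $\SO(n)$ is bounded (every rotation has Frobenius norm $\sqrt n$), the pointwise estimate $|F|\le \sqrt n + \dist(F,\SO(n))$ yields $\|\nabla u_\eps\|_{L^p(\eps\Ystiff\cap\Omega;\R^{n\times n})}^p \le C'$ uniformly for $\eps\le 1$. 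Adding the two pieces shows that $(\nabla u_\eps)_\eps$ is bounded in $L^p(\Omega;\R^{n\times n})$, uniformly in $\eps$.

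Since each $u_\eps$ lies in $L^p_0(\Omega;\R^n)$, i.e.\ has vanishing mean value, the Poincar\'e--Wirtinger inequality on the bounded Lipschitz domain $\Omega$ upgrades this to a uniform bound of $(u_\eps)_\eps$ in $W^{1,p}(\Omega;\R^n)$. As the embedding $W^{1,p}(\Omega;\R^n)\cembed L^p(\Omega;\R^n)$ is compact (Rellich--Kondrachov; here $p>n$ even gives $W^{1,p}(\Omega;\R^n)\cembed C^0(\overline\Omega;\R^n)$), the sequence $(u_\eps)_\eps$ is relatively compact in $L^p(\Omega;\R^n)$, which is the claim.

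I do not expect a genuine obstacle in this step: it is a standard coercivity-plus-compact-embedding argument. The only point requiring a word of care is the stiff-layer gradient estimate, where one trades the $\eps^\alpha$-smallness of the distance term against the uniform boundedness of $\SO(n)$; note that for compactness alone the precise scaling regime is irrelevant, since any $\alpha\ge 0$ would suffice, whereas $\alpha>p$ becomes essential only when one further wants to identify the structure of limit points via Theorem~\ref{theo:rigidity_general}.
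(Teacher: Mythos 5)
Your argument proves only the final \emph{compactness} assertion of the theorem, and for that claim it matches the paper's proof almost line by line: the coercivity from $(H2)$ on $\eps\Ysoft\cap\Omega$, the pointwise bound $|F|\le\sqrt n+\dist(F,SO(n))$ together with $(H4)$ on $\eps\Ystiff\cap\Omega$, the Poincar\'e--Wirtinger inequality for mean-zero functions, and the Rellich--Kondrachov embedding. Your remark that $\alpha\ge 0$ suffices for this piece is also correct and consistent with the paper.

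However, the statement you were asked to prove is the full $\Gamma$-convergence theorem, and the two parts that constitute its actual content --- the liminf inequality $(i)$ and the recovery-sequence construction $(ii)$ --- are not addressed at all. These are far from automatic. To prove $(ii)$ one must first build recovery sequences for affine limits using laminates of rotations and shears of the form $\nabla v^F_\eps=R_F\mathbbm{1}_{\eps\Ystiff}+F_\lambda\mathbbm{1}_{\eps\Ysoft}$ perturbed by near-optimal test fields for $\Wsoft^{\rm qc}$, then pass to general $u$ of the form $u(x)=R(x)x+b(x)$ by a piecewise-constant approximation of $\nabla u$ in combination with the admissible approximating sequence from Proposition~\ref{prop:restricted_approximation} (which is where the structural condition $\nabla'R=0$, $\nabla'b=0$ enters) and an Attouch diagonalization. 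To prove $(i)$ one must first invoke Theorem~\ref{theo:rigidity_general} (this is precisely where $\alpha>p$ is needed) to show that finite-energy limits have the stated anisotropic form and $E_{\rm hom}(u)=\infty$ otherwise; then, crucially using the polyconvexity hypothesis $(H1)$, one applies Jensen's inequality twice together with the weak continuity of minors for $p>n$ to obtain $\liminf_\eps\int_{\eps\Ysoft\cap Q}\Wsoft(\nabla u_\eps)\dd x\ge|Q|W_{\rm hom}(F)$ on cuboids, and finally localizes. None of these steps --- the rigidity input, the role of polyconvexity, the Null-Lagrangian identity $\Mcal(F)=\lambda\Mcal(F_\lambda)+(1-\lambda)\Mcal(R_F)$, the gluing of local microstructures onto a globally admissible sequence --- appears in your proposal. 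As it stands, the proof is incomplete, establishing only the easiest of the three claims.
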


\begin{remark}\label{rem:homogenization} 

a) If $\Wsoft$ is convex, then $\Wsoft^{\rm qc}=\Wsoft^{\rm c}= \Wsoft$, so that $W_{\rm hom}(F) = \lambda \Wsoft(F_\lambda)$ for $F\in \Acal$. In this case, the proof of Theorem~\ref{theo:hom} can be simplified as indicated below.\\[-0.2cm]

b) It is well-known that the definition of quasiconvexity in~\eqref{qc}, as well as the representation formula for the quasiconvex envelope $W^{\rm qc}$, is independent of the choice of the domain, see e.g.~\cite[Proposition 5.11]{Dac08}. Therefore, we have for any open set $O\subset \R^n$ that
\begin{align*}
\Wsoft^{\rm qc}(F) = \inf_{\varphi\in W_0^{1,p}(O;\R^n)}\dashint_{O} \Wsoft(F+ \nabla \varphi)\dd{y}, \qquad F\in \R^{n\times n}. 
\end{align*}
Alternatively, $\Wsoft^{\rm qc}$ can be expressed with the help periodic perturbations on a cube $Q\subset\R^n$ as 
\begin{align*}
\Wsoft^{\rm qc}(F) = \inf_{\varphi\in W_{\#}^{1,p}(Q;\R^n)} \dashint_{Q} \Wsoft(F+\nabla \varphi)\dd{y}, \qquad F\in \R^{n\times n},
\end{align*} 
see e.g.~\cite[Proposition 4.19]{Mul99} or~\cite[Proposition 5.13]{Dac08}.\\[-0.2cm] 

c) The homogenized energy density $\Whom$ is non-negative and inherits the property~$(H2)$ from $\Wsoft$. This follows from the fact that $\Wsoft^{\rm qc}$ has standard $p$-growth, because  $\Wsoft$ has, along with the estimate  
\begin{align}\label{est37}
\tfrac{1}{2 \lambda} |F| - \tfrac{n}{\lambda} \leq |F_\lambda|\leq \tfrac{1}{\lambda}(|F| + 1)\quad \text{for $F\in \Acal$.}
\end{align}

Moreover, $W_{\rm hom}$ is locally Lipschitz continuous in the sense that, just as $\Whom$, it satisfies hypothesis $(H3)$. Precisely, one can find $L_{\rm hom}>0$ such that 
\begin{align}\label{locLip_Whom}
|\Whom(F) -\Whom(G)| \leq L_{\rm hom}(1+|F|^{p-1}+|G|^{p-1})|F-G| \quad \text{for all $F, G\in \Acal$. }
\end{align} 
To see this, we exploit that the property $(H3)$ carries over from $\Wsoft$ to $\Wsoft^{\rm qc}$ (cf.~e.g.~\cite[Lemma 2.1\,c)]{Mue87}). Hence, 
\begin{align*} 
|\Whom(F) -\Whom(G)|\leq \lambda |\Wsoft^{\rm qc}(F_\lambda) -\Wsoft^{\rm qc}(G_\lambda)| \leq \lambda \tilde L(1+|F_\lambda|^{p-1}+|G_\lambda|^{p-1})|F_\lambda-G_\lambda| 
\end{align*} 
for $F, G\in \Acal$ with a constant $\tilde L>0$. In view of~\eqref{est37}, it only remains to estimate $|F_\lambda-G_\lambda|$ suitably from above by $|F-G|$. We observe that
\begin{align*}
|F_\lambda -G_\lambda| \leq \tfrac{1}{\lambda} |F-G| + \tfrac{1-\lambda}{\lambda} (|\widehat{F}-\widehat{G}| + |R_Fe_n - R_Ge_n|) \leq \tfrac{2-\lambda}{\lambda}|F-G| + \tfrac{1-\lambda}{\lambda} |R_Fe_n-R_Ge_n|,
\end{align*}
where $\widehat{A}$ stands for the $n\times (n-1)$-matrix that results from removing the last column of $A\in \R^{n\times n}$.
We denote the $n$-dimensional cross product of vectors $v_1, \dots, v_n\in \R^n$ by $v_1\times \ldots \times v_{n-1} = \times_{i=1}^{n-1} v_i\in \R^n$. The latter is by definition the uniquely determined vector that is orthogonal on the hyperplane spanned by $v_1, \ldots, v_{n-1}$ such that the orientation of $v_1, \ldots, v_{n-1}, \times_{i=1}^{n-1}v_i$ is positive and its norm is the volume of the parallelotope associated with $v_1, \ldots, v_{n-1}$. 
For every rotation $R\in SO(n)$, one has that $Re_n=\times_{i=1}^{n-1} Re_i$. 

The multilinearity of the cross product in $\R^n$ and the fact that $|R_Fe_i| = |R_Ge_i|=1$ for $i=1, \ldots, n$ allows us to obtain iteratively that 
\begin{align*}
 & |R_Fe_n-R_Ge_n| = |\times_{i=1}^{n-1} R_F e_{i} - \times_{i=1}^{n-1} R_Ge_i | \\ & \qquad \leq |R_Fe_1-R_Ge_1| + |R_Ge_1\times R_G e_2\times \ldots \times R_Ge_{n-1} - R_Ge_1\times R_Fe_2\times \ldots \times R_Fe_{n-1}| \\  &  \qquad \leq \ldots \leq \sum_{i=1}^{n-1}|R_F e_i-R_G e_i|\leq (n-1) |\widehat{R_F}-\widehat{R_G}|\leq (n-1) |F-G|.
\end{align*}

Finally, we combine the above estimates to deduce the desired local Lipschitz property~\eqref{locLip_Whom}. \\[-0.2cm]
 
d) As mentioned in the introduction, proving a $\Gamma$-limit homogenization result as above without the hypothesis $(H1)$ is an open problem. In any case, Theorem~\ref{theo:hom} provides an upper bound on the $\Gamma$-limit (if existent) in that situation.
\end{remark}

We subdivide the proof of Theorem~\ref{theo:hom} into three main parts. After showing compactness, we first determine the homogenization $\Gamma$-limit for all affine functions, and then prove the general statement via a localization argument. 
Note that the specific structure of the admissible limit deformations as characterized in Theorem~\ref{theo:rigidity_general}, in particular the resulting multiplicative separation of $x'$ and $x_n$-variables in~\eqref{eq:form_limitgradient}, is key. This observation allows us to construct an approximation that respects the (asymptotic) constraints on the stiff layers, cf.~Proposition~\ref{prop:restricted_approximation}.

The first part of the proof is standard, yet, we sketch it here for the readers' convenience. 
\begin{proof}[Proof of Theorem~\ref{theo:hom} (Part~I): Compactness.] Let $(u_\eps)_\eps\subset L^p_0(\Omega;\R^n)$ be such that $E_\eps(u_\eps)<C$ for all $\eps>0$. Then, since $\dist(F, SO(n))\geq |F| - \sqrt{n}$ for all $F\in \R^{n\times n}$, the lower bounds on $\Wsoft$ and $\Wstiff$ in $(H2)$ and $(H4)$, imply that $(\nabla u_\eps)_\eps$ is uniformly bounded in $L^p(\Omega;\R^n)$. The stated relative compactness of $(u_\eps)_\eps$ in $L^p(\Omega;\R^n)$ follows now from Poincar\'{e}'s inequality, which shows that 
\begin{align*}
\norm{u_\eps}_{W^{1,p}(\Omega;\R^n)}\leq C\qquad \text{for all $\eps>0$,}
\end{align*} 
along with the compact embedding $W^{1,p}(\Omega;\R^n) \hookrightarrow\hookrightarrow L^p(\Omega;\R^n)$.
\end{proof}
 
\begin{proof}[Proof of Theorem~\ref{theo:hom} (Part~II): Affine case.]
Suppose that $u\in W^{1,p}(\Omega;\R^n)\cap L_0^p(\Omega;\R^n)$ with $E_{\rm hom}(u)<\infty$ is affine. Hence, there is $F\in\R^{n\times n}$ with $F\in \Acal$, cf.~\eqref{def:Acal}. 

\textit{Step~1: Upper bound.} 
The construction of a recovery sequence for $u$ as above, that is, finding $(u_\eps)_\eps\subset L^p_0(\Omega;\R^n)$ with
\begin{align}\label{recov_affine}
u_\eps\to u \text{\ in $L^p(\Omega;\R^m)$} \quad \text{and} \quad  E_\eps(u_\eps) \to E_{\rm hom}(u) \qquad \text{as $\eps\to 0$,}
\end{align} 

requires a careful adaptation of by now classical techniques, see e.g.~\cite{Mul87}. Indeed, instead of glueing small-scale oscillations on top of an affine function, the former are glued onto an appropriate laminate, namely the one constructed in~\eqref{eq:affine}. 

Let $\delta>0$. In view of Remark~\ref{rem:homogenization}\,b), one can find $\varphi_\delta\in W^{1,p}_0(\Ysoft;\R^n)$ such that 
\begin{align}\label{est_Whom}
\Wsoft^{\rm qc}(F_\lambda) \leq  \dashint_{\Ysoft} \Wsoft(F_\lambda + \nabla \varphi_\delta) \dd y  \leq \Wsoft^{\rm qc}(F_\lambda)+ \delta.
\end{align}
We set $\varphi_\delta$ equal to zero in the remainder of the unit cube and extend it $Y$-periodically to $\R^n$. 
For $\eps>0$ let $v_\eps^F$ be a Lipschitz function with gradients as in~\eqref{eq:affine} and vanishing mean value on $\Omega$. Then, $v_\eps^F\to u$ in $L^p(\Omega;\R^n)$ as $\eps\to 0$. 
With
\begin{align*}
u_{\delta,\eps}(x) = v_\eps^F(x) + \eps \varphi_\delta(\tfrac{x}{\eps}), \quad x \in \Omega,
\end{align*}
it follows that $u_{\delta, \eps}\to u$ in $L^p(\Omega;\R^n)$ as $\eps\to 0$. 
Regarding energies, we obtain that
\begin{align*}
E_\eps(u_{\delta,\eps}) = \int_{\eps\Ysoft\cap \Omega} \Wsoft \big(F_\lambda + \nabla \varphi_\delta (\tfrac{x}{\eps}) \big)\dd x = \int_{\Omega} \Wsoft (F_\lambda + \nabla \varphi_\delta \bigl(\tfrac{x}{\epsilon})\bigr)\mathbbm{1}_{\Ysoft}(\tfrac{x}{\eps})  \dd{x}. 
\end{align*}
Hence, as $\eps$ tends to zero,
\begin{align*}
\lim_{\eps\to 0} E_\eps(u_{\delta,\eps})  =|\Omega| \int_{Y}\Wsoft \bigl(F_\lambda + \nabla \varphi_\delta\bigr) \mathbbm{1}_{\Ysoft} \dd{x} = \lambda |\Omega| \dashint_{\Ysoft} \Wsoft (F_\lambda + \nabla \varphi_\delta)\dd{x},
\end{align*}
and we infer along with~\eqref{est_Whom} that
\begin{align*}
E_{\rm hom}(u_\delta) \leq \lim_{\eps \rightarrow 0} E_\eps(u_{\delta,\eps})\leq E_{\rm hom}(u_\delta) + \lambda|\Omega| \delta. 
\end{align*}
By Attouch's diagonalization lemma (see e.g.~\cite[Lemma~1.15, Corollary~1.16]{Att84}) there exist $\delta(\eps)$ such that $u_{\delta(\eps),\eps} \to u$ in $L^p(\Omega;\R^n)$ and $E_\eps(u_{\delta(\eps), \eps}) \to E_{\rm hom}(u)$ as $\eps\to 0$. Finally, defining $u_\eps:=u_{\delta(\eps), \eps}$ yields the desired recovery sequence for $u$. . 

\textit{Step~2: Lower bound.}
Let $(u_\epsilon)_\epsilon \subset L^p_0(\Omega;\R^n)$ be such that $u_\eps\rightarrow u$ in $L^p(\Omega;\R^n)$. We will show that 
\begin{align}\label{lowerbound}
\liminf_{\eps\to 0} E_\eps(u_\eps)\geq E_{\rm hom}(u). 
\end{align} 

Without loss of generality, let $\liminf_{\eps\to 0}E_{\eps}(u_\eps) = \lim_{\eps\to 0}E_\eps(u_\eps)<\infty$. In view of Part~I, 
one may further assume that 
\begin{align}\label{convergenceW1p}
u_\epsilon \rightharpoonup u\qquad \text{ in $W^{1,p}(\Omega;\R^n)$.}
\end{align}
We remark that
~\eqref{lowerbound} follows immediately, if one can prove that
\begin{align}\label{lowerbound2}
\liminf_{\eps\to 0} \int_{\eps\Ysoft\cap Q} \Wsoft (\nabla u_\eps)\dd{x} \geq \int_Q \Whom(\nabla u) \dd{x} = |Q| \Whom(F) 
\end{align} 
for any open cuboid $Q = O\times J \subset\subset \Omega$, where $O\subset \R^{n-1}$ and $J\subset \R$ and open interval. 
To deduce~\eqref{lowerbound}, we can then exhaust $\Omega$ with disjoint cuboids $Q_i\subset \Omega$ for $i\in \N$ such that $|\Omega\setminus \bigcup_{i=1}^\infty Q_i| = 0$ and apply~\eqref{lowerbound} on each $Q_i$. More precisely, for any $N\in\N$, 
\begin{align*}
\liminf_{\eps\to 0} \int_\Omega W_\eps^\alpha(\nabla u_\eps)\dd x \geq \sum_{i=1}^N \int_{\eps\Ysoft \cap Q_i} \Wsoft(\nabla u_\eps)\dd x\geq \big| \bigcup_{i=1}^N Q_i\big| \Whom(F), 
\end{align*}
so that taking the supremum over $N\in \N$ implies~\eqref{lowerbound}.

It remains to prove~\eqref{lowerbound2}, which relies substantially on hypothesis $(H1)$, or equivalently~\eqref{H1_equi}. 
Since $\Wsoft^{\rm pc}$ is polyconvex, we can find a convex function $g:\R^{\tau(n)}\to \R$ such that $W^\mathrm{pc}(F) = g(\mathcal{M}(F))$ for all $F \in \R^{n\times n}$. Moreover, let $P_\eps^i = (\R^{n-1}\times\eps [i, i+1))\cap Q$ for $i\in \N$ and $I_\eps\subset \N$ an index set such that $i\in I_\eps$ if and only if $|P_\eps^i| = \eps|O|$. As a consequence, $|\eps\Ysoft\cap P_\eps^i|= \lambda\eps|O|$ for all $i\in I_\eps$, and one finds with $Q_\eps=\bigcup_{i\in I_\eps} P_\eps^i\subset Q$ that
\begin{align}\label{conv_Qeps}
|Q_\eps| =\eps \# I_\eps \rightarrow |Q|\quad \text{and}\quad |\eps\Ysoft\cap Q_\eps| \to \lambda|Q|\qquad \text{ as $\eps\to 0$.}
\end{align}

Due to the convexity of $g$ we can invoke Jensen's inequality, applied twice, first in the version for Lebesgue-measurable functions and second in the discrete version, to obtain
\begin{align}\label{eq656}
\int_{\epsilon\Ysoft\cap Q} \Wsoft(\nabla u_\epsilon)\dd x 
&\geq \int_{\epsilon\Ysoft\cap Q} \Wsoft^\mathrm{pc}(\nabla u_\epsilon)\dd x 
\geq \sum_{i\in I_\epsilon} \int_{\eps\Ysoft\cap P^i_\eps} g\big(\mathcal{M}(\nabla u_\epsilon)\big) \dd x \nonumber \\ 
& \geq \lambda\eps \sum_{i\in I_\epsilon}\, g\Big(\dashint_{\eps\Ysoft\cap P_\eps^i} \mathcal{M}(\nabla u_\epsilon) \dd x\Big) \\
& \geq \lambda \eps \# I_\eps\,  g\Big(\frac{1}{\#I_\eps} \sum_{i\in I_\eps}\dashint_{\eps\Ysoft\cap P_\eps^i} \mathcal{M}(\nabla u_\epsilon) \dd x \Big) \nonumber \\
&=\lambda |Q_\eps|\, g\Big(\dashint_{\eps\Ysoft\cap Q_\eps} \Mcal(\nabla u_\eps)\dd x\Big). \nonumber
\end{align}

With the aim of eventually passing to the limit $\eps\to 0$ in~\eqref{eq656}, we will show first that
\begin{align}\label{weakconvergence1}
\mathcal{M}(\nabla u_\epsilon)\charf_{\epsilon\Ysoft\cap Q} 
\rightharpoonup \mathcal{M}(F)-(1-\lambda)\mathcal{M}(R_F) \quad \text{in $L^1(Q;\R^{\tau(n)})$.} 
\end{align}
For this the properties of $u_\eps$ due to the presence of the stiff layers need to be taken into account. Owing to $(H4)$ and~\eqref{convergenceW1p}, the sequence $(u_\eps)_\eps$ satisfies the requirements of Theorem~\ref{theo:rigidity_general}, and also Corollary~\ref{cor:rigidity}. Following the proofs we find the one-dimensional auxiliary sequence $(\Sigma_\eps)_\eps\subset L^p(J;SO(n))$ defined in~\eqref{Sigmaeps}. Recall from~\eqref{convergence_Sigmaeps} and~\eqref{def_R} that $\Sigma_\eps\to\Sigma_0$ in $L^p(J;\R^{n\times n})$ with $\Sigma_0(x_n)=R_F$ for $x\in Q$.
For each $\eps$, we extend $\Sigma_\eps$ constantly in $x'$ and call the resulting function $S_\eps\in L^\infty(Q;SO(n))$.  As a consequence of~\eqref{FJMestimate} (cf.~also~\eqref{Sigmaeps}) it holds that
\begin{align*}
\norm{\nabla u_\eps -S_\eps}_{L^p(\eps\Ystiff\cap Q;\R^{n\times n})} \leq C\eps^{\frac{\alpha}{p}-1}.
\end{align*}

Summing up, we have hence found a sequence $(S_\eps)_\eps \subset L^\infty(Q;SO(n))$ such that 
\begin{align} \label{eq:closednessXi}
S_\epsilon \rightarrow R_F\quad\text{ in $L^p(Q;\R^{n\times n})$} \quad \text{and}\quad
\| \nabla u_\epsilon - S_\epsilon\|_{L^p(\eps\Ystiff\cap Q;\R^{n\times n})}\rightarrow 0
\end{align} 
as $\eps\to 0$.

To see~\eqref{weakconvergence1}, let us rewrite the expression $\Mcal(\nabla u_\eps)\mathbbm{1}_{\eps\Ysoft\cap Q}$ as follows, 
\begin{align}\label{conv44}
\mathcal{M}(\nabla u_\epsilon)\charf_{\epsilon\Ysoft\cap Q} & = \mathcal{M}(\nabla u_\eps) - \mathcal{M}(\nabla u_\epsilon) \charf_{\epsilon\Ystiff\cap Q} \nonumber\\ 
&= \Mcal(\nabla u_\eps) - \big(\mathcal{M}(\nabla u_\epsilon) - \mathcal{M}(S_\epsilon)\big)\charf_{\epsilon\Ystiff\cap Q} - \mathcal{M}(S_\epsilon)\charf_{\epsilon\Ystiff\cap Q}. 
\end{align}
It is well-known that for $p>n$ weak continuity of minors holds, that is, $\mathcal{M}(\nabla u_\epsilon) \rightharpoonup \mathcal{M}(\nabla u)=\Mcal(F)$ in $L^1(\Omega;\R^{\tau(n)})$, see e.g.~\cite[Theorem 8.20, Part 4]{Dac08}.
By~\eqref{eq:closednessXi} and the Leibniz formula for determinants in combination with H\"older's inequality, 
$\Mcal(\nabla u_\eps)-\Mcal(S_\eps)\to 0$ and $\Mcal(S_\eps)\to \Mcal(R_F)$ both in $L^1(Q;\R^{\tau(n)})$. 
From the lemma on weak convergence of highly oscillating periodic functions \cite[Section~2.3]{CiD99} we infer that $\mathbbm{1}_{\eps\Ystiff\cap Q}\weaklystar (1-\lambda)$ in $L^\infty(Q)$. 
Finally, applying these results to the individual terms in~\eqref{conv44} along with a weak-strong convergence argument  implies~\eqref{weakconvergence1}.

Next, we observe that, as a Null-Lagrangian or polyaffine function, $G\mapsto \Mcal(G)$ for $G\in \R^{n\times n}$ is also rank-one affine, cf.~\cite[Theorem 5.20]{Dac08}. Since $F = \lambda F_\lambda +(1-\lambda)R_F$ and $F_\lambda-R_F=\frac{1}{\lambda}(F-R_F) = \frac{1}{\lambda}d_F\otimes e_n$, it follows that 
\begin{align*}
\Mcal(F) = \lambda \Mcal(F_\lambda) + (1-\lambda)\Mcal(R_F). 
\end{align*}

Then, together with~\eqref{weakconvergence1}, we obtain 
\begin{align*}
\mathcal{M}(\nabla u_\epsilon)\charf_{\epsilon\Ysoft\cap Q} 
\rightharpoonup \lambda \mathcal{M}(F_\lambda) \quad \text{in $L^1(Q;\R^{\tau(n)})$,} 
\end{align*}
which in view of~\eqref{conv_Qeps} and the uniform boundedness of $(\nabla u_\eps)_\eps$ in $L^p(Q;\R^n)$ results in
\begin{align}\label{conv7}
\lim_{\eps\to 0}\dashint_{\eps\Ysoft\cap Q_\eps} \Mcal(\nabla u_\eps)\dd{x} = \Mcal(F_\lambda).
\end{align}

Finally, we combine~\eqref{eq656} with~\eqref{conv_Qeps} and~\eqref{conv7} and exploit the continuity of $g$ as a convex function to arrive at~\eqref{lowerbound2}. This concludes the proof of the lower bound. 
\end{proof}

\begin{remark}\label{rem:boundaryvalues}
a) Step~1 can be performed as above for any open and bounded set $\Omega$, meaning that the restriction to a flat, cross-section Lipschitz domain is not necessary for the construction of a sequence satisfying~\eqref{recov_affine}.\\[-0.2cm] 

b) Note that the recovery sequence constructed in Step~1 can be assumed to have the same boundary values as  $v^F_\eps$, i.e.~$u_{\epsilon} - v^F_\epsilon \in W^{1,p}_0(\Omega;\R^n)$. Indeed, the small-scale oscillations glued onto the laminate $v^F_\epsilon$ for sufficiently small $\eps$ can be adapted outside of $\{x\in \Omega: \dist(x, \partial\Omega)>2\eps\}$ to vanish on $\{x\in \Omega: \dist(x, \partial \Omega)<\eps\}$. 
This modification affects neither the convergence of $(u_\eps)_\eps$ nor of $(E_\eps(u_\eps))_\eps$.
\end{remark}

Based on the findings of Part II for the affine case, we will now prove the homogenization $\Gamma$-convergence result for general limit functions. 

\begin{proof}[Proof of Theorem~\ref{theo:hom} (Part III): General case.]
Let $u\in W^{1,p}(\Omega;\R^n)\cap L_0^p(\Omega;\R^n)$ be  such that $u(x) = R(x)x + b(x)$ for $x\in \Omega$, where $R \in W^{1,p}(\Omega;SO(n))$ and $b \in W^{1,p}(\Omega;\R^{n})$ satisfy $\nabla' R =0$ and $\nabla' b =0$.
 As in the previous parts, we have arranged the arguments in several steps, numbered consecutively. 

\textit{Step~3: Upper bound.} We aim to find a sequence $(u_\eps)_\eps\subset W^{1,p}(\Omega;\R^n)$ such that $u_\eps\weakly u$ in $W^{1,p}(\Omega;\R^n)$ and $\limsup_{\eps\to 0} E_\eps(u_\eps) \leq E_{\rm hom}(u)$. 
The idea behind the construction of a recovery sequence for $u$ is to use the approximating  sequence from Proposition~\ref{prop:restricted_approximation} and to perturb it in the softer layers by suitably relaxing microstructures that guarantee the optimal energy. To obtain these perturbations, the results from Step~1 (Part II) are applied to piecewise affine approximations of $u$. 
\textit{Step 3a: Piecewise constant approximation of $\nabla u$.}
Recall that the gradient of $u$ is 
\begin{align}\label{representation4}
\nabla u = R+ (\partial_nR)x \otimes e_n + d \otimes e_n. 
\end{align}

 First we approximate the functions in~\eqref{representation4}, that is $d$, $\partial_n R$, $R$, and the identity map~${\rm id}_{\R^n}:x\mapsto x$, by simple functions. Indeed, by following standard constructions (e.g.~\cite[Theorem~1.2]{AmE08}), it is not hard to see that uniform approximation of the continuous function $R$ is possible while preserving the values in $SO(n)$. Without loss of generality, we may assume that all four approximations above have a common partition of $\Omega$. Due to the globally one-dimensional character of $d, \partial_n R$ and $R$, the elements of the partition that do not intersect with $\partial \Omega$ can be assumed to be cubes aligned with the coordinate axes. 
To be precise, for every $\delta>0$ there are finitely many cubes $Q_\delta^i\subset\R^n$, which we index by $I_\delta$,  with maximal side length $\delta$ such that $|\Omega\setminus\bigcup_{i\in I_\delta}Q_\delta^i|= 0$ and $Q_\delta^i\cap \Omega\neq\emptyset$ for $i\in I_\delta$, 
and $d^i_\delta, \xi^i_\delta \in \R^n$, $S_\delta^i\in \R^{n\times n}$, and $R^i_\delta \in SO(n)$ such that the simple functions defined by
\begin{align*}
R_\delta=\sum_{i \in I_\delta} R^i_\delta \mathbbm{1}_{Q^i_\delta\cap\Omega},\quad  
d_\delta=\sum_{i \in I_\delta} d_\delta^i \mathbbm{1}_{Q^i_\delta\cap\Omega},\quad  
S_\delta=\sum_{i \in I_\delta} S_\delta^i \mathbbm{1}_{Q^i_\delta\cap\Omega} \quad \text{and}\quad 
\xi_\delta=\sum_{i \in I_\delta} \xi^i_\delta \mathbbm{1}_{Q^i_\delta\cap\Omega},
\end{align*} 
satisfy
\begin{equation}
\begin{aligned}\label{pc_approximation1}
\norm{R_\delta-R}_{L^\infty(\Omega; \R^{n\times n})} + \norm{{d}_\delta-d}_{L^p(\Omega;\R^{n})} + \norm{S_\delta-\partial_n R}_{L^p(\Omega;\R^n)} + \norm{\xi_\delta-\id_{\R^n}}_{L^\infty(\Omega;\R^n)}< \delta. 
\end{aligned}
\end{equation}
Consider  the piecewise constant function $U_\delta\in L^\infty(\Omega;\R^{n\times n})$ defined by
\begin{align}\label{Udelta}
U_\delta = R_\delta +S_\delta  \xi_{\delta} \otimes e_n  + d_\delta \otimes e_n = \sum_{j \in I_\delta} U_\delta^{i}\mathbbm{1}_{Q_\delta^i\cap \Omega}, 
\end{align}
where $U_\delta^i = R_\delta^i + S_\delta^i \xi_\delta^i \otimes e_n+ d_\delta^i \otimes e_n\in \Acal$ for $i \in I_\delta$. 
Then,
\begin{align}\label{Fmu}
\norm{U_\delta - \nabla u}_{L^p(\Omega;\R^{n\times n})} \leq C\delta,
\end{align}
with a constant $C>0$ independent of $\delta$. Indeed, in view of~\eqref{pc_approximation1} and~\eqref{representation4} this is an immediate consequence of the estimate
\begin{align*}
\norm{U_\delta-\nabla u}_{L^p(\Omega;\R^{n\times n})} & \leq \norm{R_\delta-R}_{L^\infty(\Omega;\R^{n\times n})}  + \diam(\Omega)
\norm{S_\delta - \partial_n R}_{L^p(\Omega;\R^{n\times n})} \\ &\qquad + \norm{\partial_n R}_{L^p(\Omega;\R^{n\times n})}\norm{\xi_\delta-\id_{\R^n}}_{L^p(\Omega;\R^n)} + \norm{{d}_\delta-d}_{L^p(\Omega;\R^n)}. 
\end{align*}

\textit{Step~3b: Locally optimal microstructure.}
By Step~1 (Part II), where recovery sequences in the affine case were established, we can find under consideration of Remark~\ref{rem:boundaryvalues}\,a) on each $Q^i_\delta\cap\Omega$ with $\delta>0$ and $i\in I_\delta$ a sequence $(u^i_{\delta, \epsilon})_\epsilon \subset  W^{1,p}(Q^i_\delta\cap \Omega;\R^n)$ such that $\nabla u^i_{\delta,\epsilon} \rightharpoonup U_\delta$ in $L^p(Q^i_\delta\cap\Omega;\R^{n\times n})$ as $\eps\to 0$ and 
\begin{align}\label{eq42}
\lim_{\epsilon\rightarrow 0} \int_{Q^i_\delta\cap\Omega} W^\alpha_\epsilon (x, \nabla u^i_{\delta,\epsilon})\dd x =  \lim_{\eps\to 0} \int_{\eps\Ysoft \cap Q^i_\delta \cap\Omega} \Wsoft(\nabla u^i_{\delta, \eps}) \dd x =  \int_{Q^i_\delta\cap\Omega} \Whom(U_\delta) \dd x. 
\end{align}
Now, with $w_{\delta, \eps}^i:=v^{U^i_{\delta}}_\epsilon \in W^{1,\infty}(Q^i_\delta\cap\Omega;\R^n)$ a laminate as introduced in~\eqref{eq:affine}, let
\begin{align*}
\varphi_{\delta,\epsilon}^i = u^i_{\delta,\epsilon} - w_{\delta, \eps}^i\quad \text{on $Q_\delta^i\cap\Omega$.} 
\end{align*}
According to Remark~\ref{rem:boundaryvalues}\,b), we may assume that the boundary values of $u^i_{\delta,\epsilon}$  and $w_{\delta, \eps}^i$ coincide, which entails that $\varphi_{\delta,\epsilon}^i \in W^{1,p}_0(Q^i_\delta\cap \Omega;\R^n)$.
Let us join these local components together in one function $\varphi_{\delta, \eps}\in W^{1,p}_0(\Omega;\R^n)$ given by
\begin{align}\label{varphi_deltaeps}
\varphi_{\delta,\epsilon} = \sum_{i\in I_\delta} \varphi^i_{\delta,\epsilon}\mathbbm{1}_{Q^i_\delta\cap\Omega}.
\end{align}
Note that by construction $\varphi_{\delta, \eps}=0$ in $\eps\Ystiff\cap\Omega$. Moreover, 
\begin{align}\label{conv79}
\nabla\varphi_{\delta, \eps}\weakly 0\quad \text{ in $L^p(\Omega;\R^{n\times n})$ as $\eps \to 0$, }
\end{align}
and $\norm{\nabla \varphi_{\delta, \eps}}_{L^p(\Omega;\R^{n\times n})}$ is uniformly with respect to $\eps$ and $\delta$.
In analogy to~\eqref{varphi_deltaeps} we define for later reference the map of local laminates
\begin{align}\label{wepsdelta}
w_{\eps, \delta} =  \sum_{i\in I_\delta} w^i_{\delta,\epsilon}\mathbbm{1}_{Q^i_\delta\cap\Omega} \in L^\infty(\Omega;\R^n). 
\end{align}
Since the homogenized energy density $\Whom$ satisfies the local Lipschitz condition~\eqref{locLip_Whom} according to Remark~\ref{rem:homogenization}\,c), we infer along with~\eqref{Fmu} and H\"older's inequality that
\begin{align*}
\int_{\Omega} \Whom(U_\delta) \dd x& \leq \int_{\Omega} \Whom(\nabla u) \dd x + C\norm{U_\delta - \nabla u}_{L^p(\Omega;\R^{n\times n})} 
\leq \int_{\Omega} \Whom (\nabla u) \dd x   + C\delta.
\end{align*}
Summing over all $i \in I_\delta$ in~\eqref{eq42} and taking the limit $\eps\to 0$ gives that
\begin{align}\label{prelim}
\limsup_{\eps \to 0} \int_{\Omega} W^\alpha_\eps(x, U_{\delta,\epsilon})\dd{x} =  \limsup_{\eps \to 0} \int_{\eps\Ysoft\cap\Omega} \Wsoft(U_{\delta,\epsilon})\dd{x}  \leq  \int_{\Omega} W_{\rm hom} (\nabla u)\dd x + C\delta,
\end{align}
where $U_{\eps, \delta} = \sum_{i\in I_\delta} \nabla u^i_{\delta, \eps} \mathbbm{1}_{Q^i_{\delta}\cap\Omega}$.

\textit{Step~3c: Optimal construction with admissible gradient structure.} 
After diagonalization, the functions $U_{\eps, \delta(\eps)}$ would define a recovery sequence as desired, provided they have gradient structure, i.e., there is a potential $u_\eps\in W^{1, p}(\Omega;\R^n)$ with $\nabla u_\eps = U_{\eps, \delta(\eps)}$. Due to incompatibilities at the interfaces between neighboring cubes, however, this can in general not be expected.
 To overcome this issue and to obtain an admissible recovery sequence, we discard the local laminates $w_{\eps, \delta}$ from~\eqref{wepsdelta}, and instead add the locally optimal microstructures $\varphi_{\delta,\epsilon}$ onto the functions $v_\epsilon$, which result from Proposition~\ref{prop:restricted_approximation} applied to $u$. 

More precisely, applying Proposition~\ref{prop:restricted_approximation} to the given $u$ provides us with an approximating sequence in $W^{1,p}(\Omega;\R^n)$ with useful properties, which we call $(v_\eps)_\eps$. In particular, 
\begin{align}\label{conv_veps}
\nabla v_\eps\weakly \nabla u\qquad \text{in $L^{p}(\Omega;\R^n)$,}
\end{align}
$\nabla v_\eps\in SO(n)$ a.e.~in $\eps\Ystiff\cap \Omega$, 
\begin{align}\label{conv93}
\norm{\nabla v_\eps - (\nabla u)_\lambda}_{L^p(\eps\Ysoft\cap \Omega;\R^{n\times n})} \to 0 
\end{align}
with $(\nabla u)_\lambda$ as in~\eqref{Ulambda}. 

Let $u_{\delta, \epsilon}\in W^{1,p}(\Omega;\R^n)\cap L_0^{1,p}(\Omega;\R^n)$ be given by
\begin{align*}
u_{\delta,\eps}= v_\eps + \varphi_{\delta,\epsilon} - \dashint_{\Omega} v_\eps + \varphi_{\delta, \eps}\dd{x}. 
\end{align*}
Next, we estimate the energetic error brought about by replacing $w_{\eps, \delta}$ in Step~3b with $v_\eps$.
By $(H3)$, H\"older's inequality and the above definitions, 
\begin{align}\label{estimate-energy}
& \int_{\eps\Ysoft\cap \Omega} |\Wsoft(U_{\delta, \eps}) - \Wsoft(\nabla u_{\delta, \eps})| \dd{x}\nonumber \\
& \ \leq  L \normb{1+|U_{\delta, \eps}|^{p-1}+|\nabla u_{\delta, \eps}|^{p-1}}_{L^{\frac{p}{p-1}}(\eps\Ysoft\cap \Omega)} \norm{U_{\delta, \eps}-\nabla u_{\delta, \eps}}_{L^p(\eps\Ysoft\cap \Omega;\R^{n\times n})}\\
&\  \leq C\big(1+\norm{\nabla v_\eps}_{L^p(\Omega;\R^n)} + \norm{(U_\delta)_\lambda}_{L^p(\Omega;\R^{n\times n})} + \norm{\nabla \varphi_{\eps, \delta}}_{L^p(\Omega;\R^n)}\big) \norm{(U_\delta)_\lambda-\nabla v_\eps}_{L^p(\eps\Ysoft\cap \Omega;\R^{n\times n})} \nonumber
\end{align}
with $C>0$ independent of $\eps$ and $\delta$. The first factor in the last line of~\eqref{estimate-energy} is uniformly bounded (with respect to $\delta$ and $\eps$) as a consequence of~\eqref{conv_veps},~\eqref{Fmu} and the remark below~\eqref{conv79}.
The second factor can be controlled with the help of~\eqref{conv93} and the following estimate, which exploits~\eqref{pc_approximation1} and~\eqref{Fmu}, 
\begin{align}\label{est09}
\| (\nabla u)_\lambda - (U_\delta)_\lambda\|_{L^p(\eps\Ysoft\cap \Omega;\R^{n\times n})}
 \leq (1-\tfrac{1}{\lambda}) \|R -R_\delta\|_{L^\infty(\Omega;\R^{n\times n})} + \tfrac{1}{\lambda} \norm{\nabla u-U_\delta}_{L^p(\Omega;\R^{n\times n})} \leq C\delta.
\end{align}

Thus, 
\begin{align}\label{est51}
\int_{\eps\Ysoft\cap \Omega} |\Wsoft(U_{\delta, \eps}) - \Wsoft(\nabla u_{\delta, \eps})| \dd{x} \leq 
C \big(\norm{(\nabla u)_\lambda-\nabla v_\eps}_{L^p(\eps\Ysoft\cap \Omega;\R^{n\times n})} +\delta\big).
\end{align}

\textit{Step 3d: Diagonalization.}
As both $U_{\eps, \delta}$ and $\nabla u_{\delta, \eps}$ lie in $SO(n)$ almost everywhere on the stiff layers,~\eqref{prelim} in combination with~\eqref{est51},~\eqref{conv93} and $(H2)$  
 yields that
\begin{align*}
\limsup_{\eps\to 0} E_\eps(u_{\eps, \delta}) & \leq \int_{\Omega}W_{\rm hom}(\nabla u)\dd x + C\delta. 
\end{align*}
Besides, we derive from~\eqref{conv_veps} and~\eqref{conv79} that $\nabla u_{\eps, \delta}\weakly \nabla u$ in $L^{p}(\Omega;\R^n)$ as $\eps\to 0$ for every $\delta$. After exploiting Poincar\'e's inequality, the compact embedding of $W^{1,p}$ into $L^p$, and the Urysohn subsequence principle it follows then that $u_{\eps, \delta} \to u$ in $L^p(\Omega;\R^n)$ as $\eps\to 0$.

Finally, the diagonalization lemma by Attouch (see e.g.~\cite[Lemma~1.15, Corollary~1.16]{Att84}) guarantees the existence of a sequence $\delta(\eps)$ such that $u_\eps:=u_{\eps, \delta(\eps)}\in W^{1,p}(\Omega;\R^n)\cap L_0^p(\Omega;\R^n)$ satisfies 
\begin{align*}
\limsup_{\eps\to 0} E_\eps(u_{\eps}) \leq \int_{\Omega}W_{\rm hom}(\nabla u)\dd x. 
\end{align*}
and $u_\eps\to u$ in $L^p(\Omega;\R^n)$.
This shows that $(u_\eps)_\eps$ is a recovery sequence for $u$ as stated.


\textit{Step~4: Lower bound.}
Let $(u_\eps)_\eps\subset W^{1,p}(\Omega;\R^n)\cap L_0^p(\Omega;\R^n)$ be a sequence of uniformly bounded energy, i.e.~$E_\epsilon(u_\epsilon) < C$ for all $\eps>0$, such that $u_\eps \rightharpoonup u$ in $W^{1,p}(\Omega; \R^n)$ for some $u\in W^{1,p}(\Omega; \R^n)$. By Theorem~\ref{theo:rigidity_general}, $\nabla u$ has the form~\eqref{eq:form_limitgradient}.
We will show that
\begin{align}\label{liminf_aux}
\liminf_{\eps\rightarrow 0} \int_{\eps\Ysoft\cap \Omega} \Wsoft(\nabla u_\eps) \dd x \geq \int_\Omega \lambda \Wsoft^{\rm qc}((\nabla u)_\lambda) \dd x = \int_{\Omega} W_{\rm hom}(\nabla u)\dd{x},
\end{align}
which implies the desired liminf-inequality $\liminf_{\eps\to 0}E_\eps(u_\eps)\geq E_{\rm hom}(u)$. 

To tie this general case to the affine one in Step~2, we adjust to our specific situation a common approximation strategy (see e.g.~\cite[Theorem~1.3]{Mul87}) based on comparison sequences that involve elements of the constructed recovery sequences. Note that there is no need for the comparison sequence to have full gradient structure, which allows us to argue separately on each piece of the piecewise constant approximation of $\nabla u$.

\textit{Step 4a: Construction of a comparison sequence.} 
First, we approximate $\nabla u$ by piecewise constant functions $U_\delta$ as in Step~3a, 
see~\eqref{Udelta} and \eqref{Fmu}. 
For $\eps, \delta>0$ let $w_{\eps, \delta}$ and $v_\epsilon$ be as in Step~3c. Recall that for any $\delta>0$ and $i\in I_\delta$,
\begin{align}\label{conv64}
\nabla w_{\delta, \eps}^i \rightharpoonup U^i_\delta  \quad \text{in $L^p(Q^i_\delta;\R^{n\times n})$ as $\eps\to 0$,}
\end{align}
and that the sequence $(v_\eps)_\eps\subset W^{1,p}(\Omega;\R^n)$ satisfies~\eqref{conv_veps} and~\eqref{conv93}.  Moreover, \begin{align}\label{conv55}
\norm{\nabla v_\eps - R}_{L^p(\eps\Ystiff\cap\Omega;\R^{n\times n})} \to 0 \quad \text{as $\eps\to 0$,}
\end{align}
in view of Proposition~\ref{prop:restricted_approximation}. 

Now let us introduce 
\begin{align*}
z_{\delta, \epsilon} = u_\epsilon-v_\epsilon+w_{\delta,\epsilon} + \dashint_{\Omega} v_\eps - w_{\delta, \eps} \dd{x}.
\end{align*} These functions have vanishing mean value on $\Omega$ and satisfy 
 $z_{\delta, \eps}^i=z_{\delta, \eps}|_{Q_\delta^i} \in W^{1,p}(Q_\delta^i;\R^n)$ for any $i\in I_\delta$. Due to~\eqref{conv64},~\eqref{conv_veps} and the assumption on the weak convergence of $(u_\eps)_\eps$, it follows for every $\delta>0$ that
\begin{align*}
\nabla z_{\delta, \epsilon}^i = \nabla u_\epsilon- \nabla v_\epsilon + \nabla w_{\delta,\epsilon}^i 
\rightharpoonup U^i_\delta \quad \text{in~} L^p(Q^i_\delta;\R^n) \quad\text{as $\eps\to 0$.} 
\end{align*}

Hence, as a consequence of the result in the affine case (see Step~2, Part~II), applied to the restriction of $z_{\delta, \eps}$ to any cuboid $Q^i_\delta$ with $i\in \tilde I_\delta:=\{i\in I_\delta: Q_\delta^i\subset\subset Q\}$, we deduce that 
 \begin{align*}
\liminf_{\epsilon \rightarrow 0}
 \int_{\epsilon\Ysoft\cap Q_\delta^i} W^\alpha_\eps(x,\nabla z_{\delta,\epsilon}^i) \dd x \geq  \int_{Q_\delta^i} W_{\rm hom}(U_\delta^i) \dd x.
 \end{align*}  
In fact, if
\begin{align}\label{conv60}
\norm{\dist(\nabla z_{\delta, \eps}^i, SO(n))}_{L^p(\eps\Ystiff \cap Q_\delta^i)}\to 0
\end{align} 
as $\eps\to 0$, one can follow the reasoning of Step~2 in Part~II to see that even 
 \begin{align}\label{est32}
\liminf_{\epsilon \rightarrow 0} 
 \int_{\epsilon\Ysoft\cap Q_\delta^i} \Wsoft(\nabla z_{\delta,\epsilon}^i) \dd x \geq  \int_{Q_\delta^i} W_{\rm hom}(U_\delta^i) \dd x.
 \end{align}  
  
To verify~\eqref{conv60} for $i\in \tilde I_\delta$, we mimic the arguments leading to~\eqref{eq:closednessXi} on the cuboid $Q_\delta^i\subset\subset \Omega$. This implies in particular that
\begin{align}\label{conv61}
 \|\nabla u_\epsilon - R\|_{L^p(\epsilon\Ystiff\cap Q_\delta^i;\R^{n\times n}) }\to 0.
\end{align}
Then, 
\begin{align*}
\|\nabla z_{\delta, \epsilon}^i-R^i_\delta\|_{L^p(\epsilon\Ystiff\cap Q_\delta^i;\R^{n\times n})}  &
 =\|\nabla z_{\delta, \epsilon}^i-\nabla w_{\delta, \eps}^i\|_{L^p(\epsilon\Ystiff\cap Q_\delta^i;\R^{n\times n})} \\
 &
= \|\nabla u_\epsilon-\nabla v_\epsilon\|_{L^p(\epsilon\Ystiff\cap Q_\delta^i;\R^{n\times n})} \\
& \leq \|\nabla u_\epsilon- R\|_{L^p(\epsilon\Ystiff\cap Q_\delta^i;\R^{n \times n})} 
+\|R- \nabla v_\eps\|_{L^p(\eps\Ystiff \cap Q_\delta^i;\R^{n\times n})}, 
\end{align*}
which in light of~\eqref{conv61} and~\eqref{conv55} gives~\eqref{conv60}.

 \textit{Step 4b: Energy estimates.} 
For the homogenized energy, we derive from the local Lipschitz continuity of $W_{\rm hom}$ (cf.~Remark~\ref{rem:homogenization}\,c)), along with~\eqref{Fmu} and H\"older's inequality, that 
\begin{align*}
 \int_\Omega |W_\mathrm{hom}(U_\delta) - W_\mathrm{hom}(\nabla u) | \dd{x} \leq C \|U_\delta-\nabla u\|_{L^p(\Omega;\R^{n\times n})} < C\delta. 
 \end{align*}
Furthermore, with $(H2)$ and the uniform $L^p$-bounds on $\nabla u_\eps$ and $\nabla w_{\delta, \eps}^i$, we have for any $i\in I_\delta$,
 \begin{align*}
&  \int_{\epsilon\Ysoft\cap Q_\delta^i} |\Wsoft (\nabla z_{\delta,\epsilon}^i) - \Wsoft (\nabla u_\eps)|
 = \int_{\eps\Ysoft\cap Q_\delta^i} |\Wsoft(\nabla u_\epsilon- \nabla v_\epsilon+\nabla w_{\delta,\epsilon}) - \Wsoft(\nabla u_\eps)|\dd x \\
  &\qquad\qquad \leq  C \|\nabla v_\epsilon-\nabla w_{\delta,\epsilon}\|_{L^p(\eps\Ysoft \cap Q_\delta^i;\R^{n\times n})}  \\ &\qquad\qquad \leq C \bigl( \|\nabla v_\epsilon-(\nabla u)_\lambda\|_{L^p(\eps\Ysoft \cap Q_\delta^i;\R^{n\times n})} + \|(\nabla u)_\lambda - (U_\delta)_\lambda \|_{L^p(\eps\Ysoft \cap Q_\delta^i;\R^{n\times n})} \bigr).
  \end{align*}
 
Due to~\eqref{conv93}, the first expression on the right hand side converges to zero as $\eps\to0$, while the second can be estimated from above by $\delta$ by~\eqref{est09}. Considering~\eqref{est32}, we conclude after summing over $i\in \tilde I_\delta$ that
 \begin{align*}
 \liminf_{\eps\to 0} \int_{\eps\Ysoft\cap \Omega_\delta} \Wsoft(\nabla u_\eps)\dd{x}  \geq \int_{\Omega_\delta} W_{\rm hom}(\nabla u) \dd{x} - C\delta,
 \end{align*}
 where $\Omega_\delta= \bigcup_{i\in \tilde I_\delta} Q_\delta^i$. Since $|\Omega\setminus \Omega_\delta|\to 0$ by construction, passing to the limit $\delta \rightarrow 0$ establishes~\eqref{liminf_aux}, which concludes the proof.
  \end{proof}

As the next remark shows, the homogenized energy density $W_{\rm hom}$ from~\eqref{Whom} coincides with the single-cell formula arising from a related model without elasticity (''$\alpha=\infty$'') on the stiff layers. This observation indicates that microstructures developing over multiple cells, as they are to be expected in general homogenization problems with non-convex energy densities (cf.~\cite{Mul87} and more recently~\cite{BaG10}), do not occur. They are indeed inhibited by the presence of the stiff horizontal layers.

\begin{remark}\label{rem:cell}
With $\Wsoft$ satisfying $(H1)$-$(H3)$ and $W_{\rm rig}(F)=\chi_{SO(n)}(F)$ for $F\in \R^{n\times n}$, let $\overline{W}: \Omega\times \R^{n\times n}\rightarrow [0,\infty]$ be given by
\begin{align*}
\overline{W}(x, F) =\begin{cases}
W_{\rm rig}(F) & \text{for $x\in \Ystiff \cap\Omega$,} \\
\Wsoft(F) & \text{for $x\in \Ysoft\cap \Omega$, }
\end{cases}
\end{align*}
and denote by $\overline{W}_{\rm cell}$ the cell formula associated with $\overline{W}$, i.e.,
\begin{align*}
\overline{W}_{\rm cell}(F) = \inf_{\psi \in W^{1,p}_{\#}(Y;\R^n)} \dashint_Y \overline{W}(y, F+\nabla \psi) \dd y, \quad F\in \R^{n\times n}.
\end{align*} 
We will show that for $F\in \R^{n\times n}$,
\begin{align}\label{equality_Wcell}
\overline{W}_{\rm cell}(F) = \begin{cases}
\Whom (F) & \text{for $F \in \Acal$, }\\  
\infty & \text{otherwise. }
\end{cases}
\end{align}

Indeed, if $\overline{W}_{\rm cell}(F)<\infty$, there exists $\psi\in W^{1,p}_{\#}(Y;\R^n)$ such that the expression $\dashint_{Y} W_{\rm rig}(y, F+\nabla \psi)\dd y$ is finite. This implies $F+ \nabla \psi\in SO(n) $ a.e.~in $\Ystiff$, and we infer from Reshetnyak's theorem~\cite{Res67} (cf.~also Theorem~\ref{prop:FJM}) that for some $R \in SO(n)$,
\begin{align}\label{eq88}
F + \nabla \psi = R\quad \text{ on $\Ystiff$.}  
\end{align}
Therefore, since $\psi$ is periodic, one obtains for $i=1, \ldots, n-1$ that
\begin{align*}
Fe_i = Fe_i + \int_Y \partial_i \psi \dd y = \int_Y R e_i  \dd y = R e_i,
\end{align*}
and hence, $F\in \Acal$ and in particular, $F=R+d\otimes e_n$ with $d\in \R^n$. By~\eqref{eq88},  $\nabla \psi = - d\otimes e_n$ on $\Ystiff$. 

Considering the piecewise affine function $v \in W^{1, \infty}_{\#}(Y;\R^n)$ with zero mean value and gradient
\begin{align*}
\nabla v = (-\mathbbm{1}_{\Ystiff} + \tfrac{1-\lambda}{\lambda}\mathbbm{1}_{\Ysoft}) d\otimes e_n, 
\end{align*}
we can find $\varphi\in W^{1,p}_{\#}(Y;\R^n)$ such that $\nabla \varphi=0$ in $\Ystiff$ and $\psi$ is represented as $\psi= v+\varphi$. 

Thus, 
 \begin{align*}
& \inf_{\psi\in W^{1,p}_{\#}(Y;\R^n)}\dashint_{Y} \overline{W}(y, F+\nabla \psi)\dd y \\ & \qquad \qquad = \inf\Bigl\{ \int_{\Ysoft} \Wsoft(F+\nabla \psi) \dd{y}: \psi\in W^{1,p}_\#(Y;\R^n), \nabla \psi = -d\otimes e_n \text{ on $\Ystiff$}\Bigr\}\\
& \qquad \qquad =  \inf\Bigl\{ \int_{\Ysoft}  \Wsoft (F + \tfrac{1-\lambda}{\lambda}d\otimes e_n+ \nabla \varphi) \dd{y}: \varphi\in W^{1,p}_\#(Y;\R^n),\ \nabla \varphi= 0 \text{ on $\Ystiff$}\Bigr\}\\
& \qquad \qquad =  \inf\Bigl\{ \dashint_{\Ysoft} \lambda \Wsoft(F_\lambda + \nabla \varphi) \dd{y}: \varphi\in W^{1,p}_\#(Y;\R^n),\   \varphi= 0 \text{ on $\Ystiff$}\Bigr\}\\
& \qquad \qquad = \lambda \inf_{\phi\in W^{1,p}_0(\Ysoft;\R^n)} \dashint_{\Ysoft} \Wsoft (F_\lambda +\nabla \phi)\dd{y}.
\end{align*} 

where the last equality makes use of Remark~\ref{rem:homogenization}\,b). This verifies~\eqref{equality_Wcell}.  
\end{remark}


\begin{appendix}
\section{Collected auxiliary results}

In the following lemma, we provide a type of reverse Poincar\'{e} inequality for special affine maps given as the difference of two rotations on a domain that is thin in one dimension. The special feature of this result (e.g.~in comparison with classical Caccioppoli estimates for harmonic maps \cite{IwS03}) is that the constant can be chosen independently of the thickness of the domain in $e_n$-direction. 

\begin{lemma}\label{lem:estimateR1R2}
For an integer $n\geq 2$ let $P=O \times I$ with $O\subset \R^{n-1}$ an open cube of side length $l>0$ and $I\subset \R$ an interval of length $h>0$, and let $1\leq p<\infty$. 
Then there exists a constant $C>0$ depending only on $n$ and $p$ such that for all rotations $R_1, R_2\in SO(n)$ and translation vectors $d\in \R^n$,
\begin{align*}
 \int_P \big|(R_2-R_1)x + d\big|^p \dd x \geq C l^p\, |P|\, |R_2-R_1|^p.
 \end{align*}
\end{lemma}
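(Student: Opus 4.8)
The claim is a lower bound, scaling-invariant in the "vertical" direction, so the natural strategy is to reduce to a statement about the matrix $A := R_2 - R_1$ and exploit the fact that $A$ is the difference of two rotations. First I would translate and rescale: after replacing $d$ by $d - R_1 c$ for the centre $c$ of $P$ and dividing through, one may assume $P = O \times I$ with $O$ centred at the origin, and the problem becomes to bound $\dashint_P |Ax + d|^p \dd x$ from below by $c\, l^p |A|^p$ uniformly in $d \in \R^n$. Since $\int_P |Ax+d|^p \dd x \geq \int_P |A'x' + (d + A e_n x_n \text{-part})|^p$ is awkward, a cleaner route is: the average over $P$ of $|Ax+d|^p$ is minimised (for fixed $A$) by an optimal choice of $d$, and by Jensen / a variance-type argument the minimum is comparable to a quantity measuring the "spread" of $Ax$ over $P$. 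Concretely, for $p \geq 1$ one has $\dashint_P |Ax + d|^p \dd x \geq c_{n,p} \big(\dashint_P |Ax - \dashint_P Ay \dd y|^2 \dd x\big)^{p/2}$ — this follows by first using $\|\cdot\|_{L^p} \geq \|\cdot\|_{L^2}$ wait, that goes the wrong way; instead use that the $L^p$ barycentre beats the $L^2$ one only up to dimensional constants, or more simply: $\dashint_P |f - d|^p \geq \dashint_P |f - \dashint f|^p$ fails too. The correct elementary fact is $\dashint_P |f-d|^p \dd x \geq 2^{-p}\dashint_P |f(x) - f(x^*)|^p \dd x$-type symmetrisation; but the slickest is simply: pick two points far apart.

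\textbf{Key steps.} (1) Reduce via translation to the case where $O = (-l/2, l/2)^{n-1}$ and $I$ is an arbitrary interval of length $h$. (2) Observe $\int_P |Ax+d|^p \dd x \geq \int_{O \times \{t\}} |A(x',t) + d|^p \dd x'$ integrated, and since $A(x',t) = A'x' + (\text{const in } x')$, reduce to a purely $(n-1)$-dimensional statement: for fixed $A' \in \R^{n \times (n-1)}$ and any $b \in \R^n$, $\dashint_{O} |A'x' + b|^p \dd x' \geq c_{n,p}\, l^p |A'|^p$. (3) Prove this last inequality: it is scale-invariant in $l$ (rescale $O$ to the unit cube), so it reduces to showing $\inf_{|A'|=1, b} \dashint_{(-1/2,1/2)^{n-1}} |A'x' + b|^p \dd x' =: c_{n,p} > 0$, which holds because the map $(A',b) \mapsto \dashint |A'x'+b|^p$ is continuous and positive on the compact set $\{|A'|=1\} \times \{|b| \le M\}$ (one may restrict $|b|$ since for $|b|$ large the integral is large), hence bounded below by a positive constant. (4) Finally, connect $|A'|$ back to $|A| = |R_2 - R_1|$: the last column $Ae_n = (R_2-R_1)e_n$ satisfies, for rotations, a comparability $|Ae_n| \leq C_n |\widehat A| = C_n|A'|$ — indeed this is exactly the cross-product estimate used in Remark~\ref{rem:homogenization}\,c), giving $|R_2 e_n - R_1 e_n| \leq (n-1)|\widehat{R_2} - \widehat{R_1}| = (n-1)|A'|$, whence $|A| \leq C_n |A'|$ and the bound $c_{n,p} l^p |A'|^p \geq C l^p |A|^p$ follows.

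\textbf{Main obstacle.} The step (3) compactness argument is clean but one must be careful that the constant does not degenerate: the point is that $\dashint_O |A'x'+b|^p \dd x'$ as a function of $b$ is a strictly convex coercive function with a unique minimiser $b^*(A')$, and at this minimiser the value equals a weighted "moment of inertia" of $O$ contracted with $A'$, which is bounded below because $O$ is a genuine $(n-1)$-cube (nonzero second moments in every direction) and $|A'| = 1$. The only genuinely delicate point is ensuring the reduction in step (4) is valid — namely that controlling the first $n-1$ columns of $R_2 - R_1$ suffices to control the whole matrix; this is where the rotation structure is essential (for general matrices the last column is free), and it is precisely supplied by the multilinear cross-product estimate $Re_n = \times_{i=1}^{n-1} Re_i$ already invoked in the paper. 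Assembling these pieces, with all constants depending only on $n$ and $p$, yields the lemma.
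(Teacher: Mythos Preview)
Your approach is correct and takes a genuinely different route from the paper's. The paper first reduces to $p=1$ via H\"older's inequality, eliminates the translation $d$ by a symmetrisation argument (averaging $x \mapsto -x$ on the centred domain gives $\int_{\overline{P}} |Ax+d'|\dd x \geq \int_{\overline{P}} |Ax|\dd x$), and then works explicitly with the singular values of $A = \Ibb - R$: using the block-diagonal normal form of a rotation it obtains $|Ax| \geq c\,|A|\sqrt{x_1^2+x_2^2}$ in diagonalising coordinates, and finishes via rotational invariance of an inscribed cylinder. You instead slice $P$ along $e_n$, reducing immediately to a uniform lower bound $\dashint_O |A'x'+b|^p\dd x' \geq c_{n,p}\, l^p |A'|^p$ for the first $n-1$ columns $A'$, which you obtain by rescaling to the unit cube and a soft compactness argument; the rotation structure enters only at the very end, through the cross-product identity $Re_n = \times_{i=1}^{n-1} Re_i$ (exactly the estimate from Remark~\ref{rem:homogenization}\,c)) to conclude $|A| \leq C_n |A'|$. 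Your route is cleaner in that it avoids any spectral analysis, works directly for all $p\geq 1$, and makes the independence of the thin direction $I$ manifest from the outset; the paper's argument is more hands-on with the structure of $SO(n)$ and yields somewhat more explicit constants.
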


\begin{proof}
We will prove the result for $p=1$, for general $p$ the statement then follows immediately from H\"older's inequality.

Moreover, without loss of generality let $R_2$ be the identity matrix $\Ibb=\Ibb_n\in \R^{n\times n}$. We set $R=R_1\in SO(n)$ and write $A:= \Ibb-R\in \R^{n\times n}$.
Let $\overline{P}$ denote the translation of the open cuboid $P$ centered in the origin. The following arguments make use of the nested sets $\widehat{P}\subset Z\subset \overline{P}$, where $Z$ is the cylinder with circular cross section inscribed in $\overline{P}$ and $\widehat{P}$ is the largest centered, open cuboid contained in $Z$. Precisely, 
\begin{align*}
Z=B^{n-1}_{l/2}\times \bigl(-\tfrac{h}{2}, \tfrac{h}{2}\bigr)\quad \text{and} \quad  \widehat{P} = \bigl(-\tfrac{l}{2\sqrt{n}}, \tfrac{l}{2\sqrt{n}}\bigr)^{n-1} \times \bigl(-\tfrac{h}{2}, \tfrac{h}{2}\bigr),
\end{align*}
where $B^{n-1}_r$ the $(n-1)$-dimensional ball around the origin with radius $r$.
  
With this notation in place, we observe that 
\begin{align}\label{estcentering}
\int_P |Ax + d| \dd x \geq \int_{\overline{P}} |Ax| \dd x \geq \int_{Z} |Ax| \dd x.
\end{align}
To derive the desired estimate, we determine the singular values of $A$. It follows from the specific structure of $A$ that
\begin{align*}
A^T A = 2\mathbb{I} - (R + R^T).
\end{align*}
Considering that every $R\in SO(n)$ can be transformed into block diagonal form with the help of another rotation $U\in SO(n)$, i.e.~there is an integer $k\leq \frac{n}{2}$ and two-dimensional rotations $\Theta_1, \dots, \Theta_{k} \in SO(2)$ such that 
\begin{align*} 
R = U^T\, {\rm diag}(\Theta_1, \dots, \Theta_{k}, \Ibb_{n-2k}) \,U,
\end{align*}
see e.g.~\cite[Satz 8.3.10]{KoM03}, we conclude from the fact that the symmetric part of a two-dimensional rotation matrix is diagonal that $A^TA  = U^T D U$, where
\begin{align*}
D= 2\, {\rm diag}(1-\theta_{1}, 1-\theta_{1}, \dots, 1-\theta_{k}, 1-\theta_{k}, 0, \ldots, 0) \in \R^{n\times n}
\end{align*}
with $\theta_i=(\Theta_i)_{11}\in [-1,1)$. One may assume without loss of generality that $\theta_1\leq \theta_2\leq \ldots \leq \theta_k$, which implies that $2(1-\theta_1)$ is the largest eigenvalue of $A^TA$, and hence corresponds to the squared spectral norm of $A$. Since all norms on $\R^{n\times n}$ are equivalent, there is a constant $C=C(n)>0$ such that $\sqrt{2(1-\theta_1)}\geq c |A|$, where $|\cdot|$ denotes the Frobenius norm.  
Hence,
\begin{align}\label{est_rotation2}
\int_{Z} |Ax|\dd x &= \int_Z \sqrt{A^TA x\cdot x}\dd x  = \int_{Z} \sqrt{D(Ux)\cdot Ux} \dd x \nonumber \\ & \geq \sqrt{2(1-\theta_1)} \int_{UZ} \sqrt{x_1^2+x_2^2} \dd x \geq C |A| \int_{UZ} \sqrt{x_1^2+x_2^2}\dd x.
\end{align}

In view of~\eqref{estcentering} and~\eqref{est_rotation2} it remains to show that
\begin{align}\label{int}
\int_{UZ} \sqrt{ x_1^2+x_2^2}\dd x \geq Cl\,|P|
\end{align}
with $C>0$ depending only on $n$.
If $U=\Ibb$, we simply neglect one of the two additive terms in the integrand, say $x_2^2$, and estimate that
\begin{align}\label{est98}
\int_Z |x_1|\dd{x} \geq \int_{\widehat{P}} |x_1| \dd x = 2 h \Bigl(\frac{l}{\sqrt{n}}\Bigr)^{n-2} \int_0^{\tfrac{l}{2\sqrt{n}}} x_1 \dd x_1= h\Bigl(\frac{l}{\sqrt{n}}\Bigr)^n = n^{-\frac{n}{2}}l\,|P|.
\end{align}
For general $U$, our argument requires to select a suitable rotation of the plane spanned by the unit vectors $e_1$ and $e_2$ to guarantee that the axes of the rotated cylinder $UZ$ is orthogonal to $e_1$. 
More precisely, one observes that any planar rotation $S={\rm diag}(\Sigma, \Ibb_{n-2})$ with $\Sigma\in SO(2)$ leaves the integral in~\eqref{int} unchanged, and therefore 
\begin{align}\label{est_UZ}
\int_{UZ} \sqrt{x_1^2+x_2^2} \dd x = \int_{SUZ} \sqrt{x_1^2+x_2^2} \dd x  \geq \int_{SUZ} |x_1|\dd{x}. 
\end{align}
Since the intersection of ${\rm span}\{e_1, e_2\}$ with the $(n-1)$-dimensional orthogonal complement of ${\rm span}\{Ue_n\}$ is at least a one-dimensional subspace, we can choose a planar rotation $S$ such that $Ue_n\cdot S^Te_1=0$, and thus $(SU)^Te_1\cdot e_n=0$.
Then there exists $Q = {\rm diag}(\Xi, \Ibb_1)\in SO(n)$ with $\Xi\in \SO(n-1)$ such that $Q^Te_1=(SU)^Te_1$, and
\begin{align*}
\int_{SUZ} |x_1|\dd{x} & =\int_Z |SU x\cdot e_1|\dd x = \int_{Z} |Qx\cdot e_1| \dd x  = \int_{QZ} |x_1| \dd x  = \int_{Z} |x_1|\dd x,
\end{align*}
where we have used the invariance of the cylinder $Z$ is invariant under rotations that leave the $x_n$-component unaffected.
In view of~\eqref{est_UZ} and~\eqref{est98} this shows~\eqref{int}, and hence, finishes the proof. 
\end{proof}

Next we give details on the extension result for locally one-dimensional functions in $e_n$-direction used in Sections~\ref{sec:proof} and~\ref{sec:suff}.
Recall that for a bounded domain $\Omega\subset \R^n$, the smallest open cuboid containing $\Omega$ is denoted by $Q_\Omega$ and  $Q_\Omega = O_\Omega \times J_\Omega$ with $O_\Omega\subset \R^{n-1}$ an open cuboid and an open interval $J_\Omega\subset\R$.

\begin{lemma}\label{lem:extension}
Let $\Omega\subset\R^n$ be a bounded, flat and cross-section connected Lipschitz domain. If $v\in W^{1, p}(\Omega;\R^m)$ satisfies $\nabla' v=0$, then $v$ can be extended to $Q_\Omega$ by a globally one-dimensional function in $e_n$-direction $\tilde v\in W^{1,p}(Q_\Omega;\R^m)\cap C^0(Q_\Omega;\R^m)$. 

In particular, one can identify $v$ with the one-dimensional function $\nu\in W^{1,p}(J_\Omega;\R^m)$ defined by the identity $\tilde v(x)=\nu (x_n)$ for $x\in Q_\Omega$.
\end{lemma}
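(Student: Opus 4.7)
My plan is to build $\nu$ by combining the local one-dimensionality of $v$ coming from $\nabla' v=0$ with connectedness of cross-sections, and then to exploit flatness in order to cover $J_\Omega$ by inscribed cuboids through which Sobolev regularity can be transferred.

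First, a standard mollification argument applied to $\nabla' v=0$ (as recalled at the start of Section~\ref{sec:proof}) produces a continuous representative of $v$ with the following local structure: every $x\in\Omega$ admits an open cuboid neighbourhood $Q_x\subset\Omega$ on which $v$ depends only on $x_n$. For every $t$ such that $\Omega_t$ is non-empty, $\Omega_t$ is open in $H_t$ and connected by hypothesis; the restriction $v|_{\Omega_t}$ is continuous and locally constant by the local one-dimensionality, hence globally constant, and I denote this value by $\nu(t)$. Flatness extends the domain of $\nu$ to all of $J_\Omega=(a,b)$: for every $t\in J_\Omega$, the set $\overline\Omega\cap H_t$ has non-empty relative interior in $H_t$ by assumption, and since $\Omega$ is a Lipschitz domain this relative interior must meet $\Omega$ itself (a full $(n-1)$-dimensional open set of boundary points at an interior height would contradict the local one-sided graph description of $\partial\Omega$), so $\Omega_t\neq\emptyset$ and $\nu$ is well-defined on $J_\Omega$.

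For the Sobolev regularity I would exploit that on any inscribed open cuboid $B'\times I\subset\Omega$ the restriction $v|_{B'\times I}$ coincides with $(x',t)\mapsto\nu(t)$, whence $\nu|_I\in W^{1,p}(I;\R^m)$ with
\begin{equation*}
|B'|\,\|\nu\|_{W^{1,p}(I;\R^m)}^p \;\leq\; \|v\|_{W^{1,p}(B'\times I;\R^m)}^p .
\end{equation*}
Flatness combined with openness of $\Omega$ produces, at every $t_0\in J_\Omega$, such an inscribed cuboid whose projection $I$ contains $t_0$ and whose transversal size $|B'|$ stays bounded below on each compact subinterval of $J_\Omega$. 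A countable locally finite covering of $J_\Omega$ by these intervals $I_k$, together with the integrability of $v$ on the matching $B'_k\times I_k\subset\Omega$ and a standard gluing argument, yields $\nu\in W^{1,p}(J_\Omega;\R^m)$; the one-dimensional Sobolev embedding $W^{1,p}\hookrightarrow C^0$ then supplies continuity. Setting $\tilde v(x):=\nu(x_n)$ gives the desired extension, and $\tilde v=v$ on $\Omega$ holds by construction.

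The main obstacle I foresee is the uniform control of $\nu$ up to the endpoints of $J_\Omega$: the cross-sections $\Omega_t$ can shrink as $t$ approaches $\partial J_\Omega$, and it is exactly the flatness hypothesis (as opposed to mere Lipschitz regularity) that prevents them from degenerating to lower-dimensional sets and so keeps the cross-sectional sizes $|B'_k|$ under control on nested exhaustions. This is what permits summing the local estimates to a genuinely global $W^{1,p}(J_\Omega)$-bound rather than only a $W^{1,p}_{\mathrm{loc}}$-bound.
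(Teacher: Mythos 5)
Your argument is correct and reaches the conclusion by a genuinely different route from the paper. The paper's proof constructs an explicit $C^1$-curve $\gamma\colon[a,b]\to\overline{\Omega}$, strictly monotone in $x_n$ with $\gamma(t)\in\Omega_t$, and then identifies the one-dimensional function as $w=v\circ\gamma$, deducing $w\in W^{1,p}(J_\Omega;\R^m)$ and extending by density of smooth functions. You avoid the curve entirely: cross-section connectedness already makes $v$ constant on each $\Omega_t$, so $\nu(t)$ is defined pointwise, and you recover $\nu\in W^{1,p}(J_\Omega;\R^m)$ from the scaling identity $|B'|\,\|\nu\|^p_{W^{1,p}(I;\R^m)}=\|v\|^p_{W^{1,p}(B'\times I;\R^m)}$ on inscribed cuboids, summed over a covering of $J_\Omega$. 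The role of flatness is the same in both arguments --- it yields cuboids with cross-section of fixed size near the two endpoints of $J_\Omega$; your approach makes this mechanism more explicit, whereas the paper folds it into the unproved assertion that $v\circ\gamma\in W^{1,p}(J_\Omega;\R^m)$.

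Two small remarks. First, the appeal to flatness and the Lipschitz boundary in your second paragraph to establish $\Omega_t\neq\emptyset$ for $t\in J_\Omega$ is unnecessary: for $t$ in the open interval $(a,b)$, if $\Omega_t$ were empty then $\{x\in\Omega:x_n<t\}$ and $\{x\in\Omega:x_n>t\}$ would be a disconnection of the (connected) set $\Omega$ into two non-empty open pieces, so $\Omega_t\neq\emptyset$ follows from connectedness alone; flatness is only needed for the regularity up to the endpoints $a,b$. Second, the covering should simply be taken \emph{finite}: flatness supplies two cuboids $O\times(a,a+r)$ and $O\times(b-r,b)$ inside $\Omega$ with a fixed cross-section, and for the compact interval $[a+r/2,\,b-r/2]$ one picks, for each $t_0$, a cuboid $B'_{t_0}\times I_{t_0}\subset\Omega$ with $t_0\in I_{t_0}$, extracts a finite subcover of $\{I_{t_0}\}$, and then $\min_k |B'_k|>0$ is automatic. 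This removes the need to argue about summability along an infinite covering with possibly degenerating $|B'_k|$, which is exactly the concern you flag in your final paragraph.
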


\begin{proof} 
As pointed out at the beginning of Section~\ref{sec:proof}, $v$ is locally one-dimensional in $e_n$-direction, and hence, locally constant on any non-empty cross section $\Omega_t=H_t\cap \Omega = \{x\in \R^n: x_n=t\}\cap\Omega$. Since the latter are connected by assumption, it follows that $v$ is also globally one-dimensional in $e_n$-direction. 

We can now define an extension $\tilde{v}$ of $v$ to $Q_\Omega$ by setting
\begin{align}\label{def_tildev}
\tilde{v}(x)=v(y) \quad \text{with $y\in \Omega_{x_n}$}
\end{align}
for $x\in Q_\Omega$.
Observe that with $Q_\Omega$ the smallest open cuboid such that $\Omega\subset Q_{\Omega}$, the intersection $H_{x_n}\cap\Omega =\Omega_{x_n}$ is non-empty for all $x\in \Omega$.
Clearly, $\tilde{v}$ is globally one-dimensional in $e_n$-direction by definition. It therefore remains to prove that $\tilde{v}\in W^{1,p}(Q_\Omega;\R^m)$ (for continuity one can then argue as in the first paragraph of Section~\ref{sec:proof}). 

To see this we will construct a sequence $w_j\in C^\infty(\overline{Q_\Omega};\R^m)$ that approximates $\tilde{v}$ in $W^{1,p}(Q_{\Omega};\R^m)$. Let 
$J_\Omega=(a, b)$ with $a, b\in \R$  $a<b$. 
Since $\Omega$ is a flat Lipschitz domain there exist $x_a\in \Omega_a$ and $x_b\in \Omega_b$ and balls $B_r(x_a)$ and $B_r(x_b)$ with radius $r>0$ such that $B_r(x_a)\cap Q_\Omega \subset \Omega$ and $B_r(x_b)\cap Q_{\Omega}\subset \Omega$. 
Exploiting further that $\Omega$ is open and connected, hence also path-connected, we can connect the edge points $x_a$ with $x_b$ by a $C^1$-curve $\gamma$ (after smoothing of a continuous curve).
Moreover, one can be chosen $\gamma$ to be monotone in $x_n$ due to the cross-section connectedness of $\Omega$ and even strictly monotone, which implies that $\gamma$ is a regular curve, considering that $\Omega$ is open. 
After reparametrization we obtain 
\begin{align}\label{curve_gamma}
\gamma \in C^1([a, b];\R^n) \quad \text{with $\gamma(t) \in \Omega_t$ for all $t\in [a, b]$.} 
\end{align}

For the composition $w=v\circ \gamma\in W^{1,p}(J_\Omega;\R^m)$ there exist approximating functions $w_j\in C^\infty(\overline{J_\Omega};\R^m)$ such that $w_j\to w$ in $W^{1,p}(a,b)$. Without changing notation, let us identify $w_j$ and $w$ with their constant expansion in $x'$, that is with elements in $W^{1,p}(Q_\Omega;\R^m)$ and $C^\infty(\overline{Q_\Omega};\R^m)$, respectively. Finally, in view of~\eqref{def_tildev} and~\eqref{curve_gamma},
\begin{align*}
w_j \to  w= v\circ \gamma = \tilde{v}  \quad \text{in $W^{1,p}(Q_\Omega;\R^m)$},
\end{align*}
which shows that $\tilde{v}\in W^{1,p}(Q_\Omega;\R^m)$ and concludes the proof. 
\end{proof}

\begin{remark}\label{rem:extension}
a) Since only local arguments have been used in the proof above, Lemma~\ref{lem:extension} still holds if $W^{1,p}(\Omega;\R^m)$ is replaced with $W^{1,p}_{\rm loc}(\Omega;\R^m)$. In this case, it is even enough to require that $\Omega\subset\R^n$ is a bounded, cross-section connected domain.\\[-0.2cm]

b) As Lemma~\ref{lem:extension} relies on constant extensions only, changing the codomain of $v$ from $\R^m$ to $SO(n)$ does not change the statement. 
\end{remark}
\end{appendix}

\section*{Acknowledgements}
The authors would like to thank Georg Dolzmann for his valuable comments on FC's PhD thesis, which helped to improve also the presentation of this manuscript. FC gratefully acknowledges a traveling grant by the DFG Graduiertenkolleg 1692 ``Curvature, Cyles, and Cohomology''. CK was partially supported by a Westerdijk Fellowship from Utrecht University. 

\bibliographystyle{abbrv}
\bibliography{Homogenization}

\begin{thebibliography}{10}

\bibitem{Alt16}
H.~W. Alt.
\newblock {\em Linear functional analysis}.
\newblock Universitext. Springer-Verlag London, Ltd., London, 2016.
\newblock An application-oriented introduction, Translated from the German
  edition by Robert N\"urnberg.

\bibitem{AmE08}
H.~Amann and J.~Escher.
\newblock {\em Analysis. {II}}.
\newblock Birkh\"auser Verlag, Basel, 2008.
\newblock Translated from the 1999 German original by Silvio Levy and Matthew
  Cargo.

\bibitem{Att84}
H.~Attouch.
\newblock {\em Variational convergence for functions and operators}.
\newblock Applicable Mathematics Series. Pitman (Advanced Publishing Program),
  Boston, MA, 1984.

\bibitem{BaG10}
M.~Barchiesi and A.~Gloria.
\newblock New counterexamples to the cell formula in nonconvex homogenization.
\newblock {\em Arch. Ration. Mech. Anal.}, 195(3):991--1024, 2010.

\bibitem{Bra85}
A.~Braides.
\newblock Homogenization of some almost periodic coercive functional.
\newblock {\em Rend. Accad. Naz. Sci. XL Mem. Mat. (5)}, 9(1):313--321, 1985.

\bibitem{Bra05}
A.~Braides.
\newblock {\em Gamma-convergence for beginners}.
\newblock Number~22 in Oxford lecture series in mathematics and its
  applications. Oxford University Press, Oxford, 1. ed edition, 2005.

\bibitem{BFL00}
A.~Braides, I.~Fonseca, and G.~Leoni.
\newblock {$\mathcal{A}$}-quasiconvexity: relaxation and homogenization.
\newblock {\em ESAIM Control Optim. Calc. Var.}, 5:539--577, 2000.

\bibitem{BrG95}
A.~Braides and A.~Garroni.
\newblock Homogenization of periodic nonlinear media with stiff and soft
  inclusions.
\newblock {\em Math. Models Methods Appl. Sci.}, 5(4):543--564, 1995.

\bibitem{CGP07}
A.~Chambolle, A.~Giacomini, and M.~Ponsiglione.
\newblock Piecewise rigidity.
\newblock {\em J. Funct. Anal.}, 244(1):134--153, 2007.

\bibitem{CCN17}
M.~Cherdantsev, K.~Cherednichenko, and S.~Neukamm.
\newblock High contrast homogenization in nonlinear elasticity under small
  loads.
\newblock {\em Preprint arXiv:1303.1224}, 2017.

\bibitem{ChC12}
M.~Cherdantsev and K.~D. Cherednichenko.
\newblock Two-scale {$\Gamma$}-convergence of integral functionals and its
  application to homogenisation of nonlinear high-contrast periodic composites.
\newblock {\em Arch. Ration. Mech. Anal.}, 204(2):445--478, 2012.

\bibitem{Chr18}
F.~Christowiak.
\newblock {\em Homogenization of layered materials with stiff components}.
\newblock PhD thesis, Universit\"at Regensburg, The address of the publisher, 5
  2018.
\newblock An optional note.

\bibitem{ChK17}
F.~Christowiak and C.~Kreisbeck.
\newblock Homogenization of layered materials with rigid components in
  single-slip finite crystal plasticity.
\newblock {\em Calculus of Variations and Partial Differential Equations},
  56(3):75, Apr 2017.

\bibitem{CDD06}
D.~Cioranescu, A.~Damlamian, and R.~De~Arcangelis.
\newblock Homogenization of integrals with pointwise gradient constraints via
  the periodic unfolding method.
\newblock {\em Ric. Mat.}, 55(1):31--53, 2006.

\bibitem{CiD99}
D.~Cioranescu and P.~Donato.
\newblock {\em An introduction to homogenization}, volume~17 of {\em Oxford
  Lecture Series in Mathematics and its Applications}.
\newblock The Clarendon Press, Oxford University Press, New York, 1999.

\bibitem{Con03}
S.~Conti.
\newblock {\em Low-energy deformations of thin elastic plates: isometric
  embeddings and branching patterns}.
\newblock Habilitation thesis. University of Leipzig, 2003.

\bibitem{CoD15}
S.~Conti and G.~Dolzmann.
\newblock On the theory of relaxation in nonlinear elasticity with constraints
  on the determinant.
\newblock {\em Arch. Ration. Mech. Anal.}, 217(2):413--437, 2015.

\bibitem{CDK13}
S.~Conti, G.~Dolzmann, and C.~Kreisbeck.
\newblock Relaxation of a model in finite plasticity with two slip systems.
\newblock {\em Math. Models Methods Appl. Sci.}, 23(11):2111--2128, 2013.

\bibitem{CDM14}
S.~Conti, G.~Dolzmann, and S.~M\"uller.
\newblock Korn's second inequality and geometric rigidity with mixed growth
  conditions.
\newblock {\em Calc. Var. Partial Differential Equations}, 50(1-2):437--454,
  2014.

\bibitem{CoS06}
S.~Conti and B.~Schweizer.
\newblock Rigidity and gamma convergence for solid-solid phase transitions with
  {SO}(2) invariance.
\newblock {\em Comm. Pure Appl. Math.}, 59(6):830--868, 2006.

\bibitem{CoT05}
S.~Conti and F.~Theil.
\newblock Single-slip elastoplastic microstructures.
\newblock {\em Arch. Ration. Mech. Anal.}, 178(1):125--148, 2005.

\bibitem{Dac08}
B.~Dacorogna.
\newblock {\em Direct methods in the calculus of variations}, volume~78 of {\em
  Applied Mathematical Sciences}.
\newblock Springer, New York, second edition, 2008.

\bibitem{Dal93}
G.~Dal~Maso.
\newblock {\em An introduction to gamma-convergence}.
\newblock Number~8 in Progress in nonlinear differential equations and their
  applications. Birkh{\"a}user, Boston, 1993.

\bibitem{Gio75}
E.~De~Giorgi.
\newblock Sulla convergenza di alcune successioni d'integrali del tipo dell'
  area.
\newblock {\em Rendiconti di Matematica e delle sue applicazioni},
  8(6):277--294, 1975.
\newblock Collection of articles dedicated to Mauro Picone on the occasion of
  his ninetieth birthday.

\bibitem{DeF75}
E.~De~Giorgi and T.~Franzoni.
\newblock Su un tipo di convergenza variazionale.
\newblock {\em Atti Accad. Naz. Lincei Rend. Cl. Sci. Fis. Mat. Natur. (8)},
  58(6):842--850, 1975.

\bibitem{DuG16}
M.~Duerinckx and A.~Gloria.
\newblock Stochastic homogenization of nonconvex unbounded integral functionals
  with convex growth.
\newblock {\em Arch. Ration. Mech. Anal.}, 221(3):1511--1584, 2016.

\bibitem{FaZ05}
D.~Faraco and X.~Zhong.
\newblock Geometric rigidity of conformal matrices.
\newblock {\em Ann. Sc. Norm. Super. Pisa Cl. Sci. (5)}, 4(4):557--585, 2005.

\bibitem{FrS15}
M.~Friedrich and B.~Schmidt.
\newblock A quantitative geometric rigidity result in {SBD}.
\newblock {\em Preprint arXiv:1503.06821}, 2015.

\bibitem{FJM02}
G.~Friesecke, R.~D. James, and S.~M{\"u}ller.
\newblock A theorem on geometric rigidity and the derivation of nonlinear plate
  theory from three-dimensional elasticity.
\newblock {\em Comm. Pure Appl. Math.}, 55(11):1461--1506, 2002.

\bibitem{FJM06}
G.~Friesecke, R.~D. James, and S.~M\"uller.
\newblock A hierarchy of plate models derived from nonlinear elasticity by
  gamma-convergence.
\newblock {\em Arch. Ration. Mech. Anal.}, 180(2):183--236, 2006.

\bibitem{GiT01}
D.~Gilbarg and N.~S. Trudinger.
\newblock {\em Elliptic partial differential equations of second order}.
\newblock Classics in mathematics. Springer, Berlin, 2. ed edition, 2001.

\bibitem{Gur81}
M.~E. Gurtin.
\newblock {\em An introduction to continuum mechanics}.
\newblock Number 158 in Mathematics in science and engineering. Academic Press,
  New York, 1981.

\bibitem{HaH10}
H.~Hanche-Olsen and H.~Holden.
\newblock The {K}olmogorov-{R}iesz compactness theorem.
\newblock {\em Expo. Math.}, 28(4):385--394, 2010.

\bibitem{IwM98}
T.~Iwaniec and G.~Martin.
\newblock The {L}iouville theorem.
\newblock In {\em Analysis and topology}, pages 339--361. World Sci. Publ.,
  River Edge, NJ, 1998.

\bibitem{IwS03}
T.~Iwaniec and C.~Sbordone.
\newblock Caccioppoli estimates and very weak solutions of elliptic equations.
\newblock {\em Atti Accad. Naz. Lincei Cl. Sci. Fis. Mat. Natur. Rend. Lincei
  (9) Mat. Appl.}, 14(3):189--205 (2004), 2003.
\newblock Renato Caccioppoli and modern analysis.

\bibitem{KRW15}
K.~Koumatos, F.~Rindler, and E.~Wiedemann.
\newblock Differential inclusions and {Y}oung measures involving prescribed
  {J}acobians.
\newblock {\em SIAM J. Math. Anal.}, 47(2):1169--1195, 2015.

\bibitem{KoM03}
H.-J. Kowalsky and G.~Michler.
\newblock {\em Lineare {A}lgebra}.
\newblock De Gruyter Lehrbuch. De Gruyter, Berlin, 2003.
\newblock 12te Auflage.

\bibitem{KrK16}
C.~Kreisbeck and S.~Kr{\"o}mer.
\newblock Heterogeneous thin films: {C}ombining fomogenization and dimension
  reduction with directors.
\newblock {\em SIAM J. Math. Anal.}, 48(2):785--820, 2016.

\bibitem{LDR95}
H.~Le~Dret and A.~Raoult.
\newblock The quasiconvex envelope of the {S}aint {V}enant-{K}irchhoff stored
  energy function.
\newblock {\em Proc. Roy. Soc. Edinburgh Sect. A}, 125(6):1179--1192, 1995.

\bibitem{Leo09}
G.~Leoni.
\newblock {\em A first course in {S}obolev spaces}, volume 105 of {\em Graduate
  Studies in Mathematics}.
\newblock American Mathematical Society, Providence, RI, 2009.

\bibitem{LeP11}
M.~Lewicka and M.~R. Pakzad.
\newblock Scaling laws for non-{E}uclidean plates and the {$W^{2,2}$} isometric
  immersions of {R}iemannian metrics.
\newblock {\em ESAIM Control Optim. Calc. Var.}, 17(4):1158--1173, 2011.

\bibitem{Mar78}
P.~Marcellini.
\newblock Periodic solutions and homogenization of nonlinear variational
  problems.
\newblock {\em Ann. Mat. Pura Appl. (4)}, 117:139--152, 1978.

\bibitem{Mul87}
S.~M{\"u}ller.
\newblock Homogenization of nonconvex integral functionals and cellular elastic
  materials.
\newblock {\em Archive for Rational Mechanics and Analysis}, 99(3):189--212,
  1987.

\bibitem{Mue87}
S.~M\"uller.
\newblock Homogenization of nonconvex integral functionals and cellular elastic
  materials.
\newblock {\em Arch. Rational Mech. Anal.}, 99(3):189--212, 1987.

\bibitem{Mul99}
S.~M{\"u}ller.
\newblock Variational models for microstructure and phase transitions.
\newblock In S.~Hildebrandt and M.~Struwe, editors, {\em Calculus of Variations
  and Geometric Evolution Problems}, volume 1713 of {\em Lecture Notes in
  Mathematics}, pages 85--210. Springer, Berlin, 1999.

\bibitem{MSZ14}
S.~M\"uller, L.~Scardia, and C.~I. Zeppieri.
\newblock Geometric rigidity for incompatible fields, and an application to
  strain-gradient plasticity.
\newblock {\em Indiana Univ. Math. J.}, 63(5):1365--1396, 2014.

\bibitem{NES18}
S.~Neukamm and M.~Sch\"affner.
\newblock Quantitative homogenization in nonlinear elasticity for small loads.
\newblock {\em Preprint ArXiv:1703.07947}, 2018.

\bibitem{Pra17}
A.~Pratelli.
\newblock On the bi-{S}obolev planar homeomorphisms and their approximation.
\newblock {\em Nonlinear Anal.}, 154:258--268, 2017.

\bibitem{Res67}
J.~G. Re{\v{s}}etnjak.
\newblock Liouville's conformal mapping theorem under minimal regularity
  hypotheses.
\newblock {\em Sibirsk. Mat. \v Z.}, 8:835--840, 1967.

\bibitem{Rud76}
W.~Rudin.
\newblock {\em Principles of mathematical analysis}.
\newblock McGraw-Hill Book Co., New York-Auckland-D\"usseldorf, third edition,
  1976.
\newblock International Series in Pure and Applied Mathematics.

\end{thebibliography}

\end{document}